\newtheorem{thm}{Theorem}
\newtheorem{lemma}{Lemma}[section]
\newtheorem{cor}{Corollary}
\theoremstyle{remark}
\newtheorem{rk}{Remark}[section]
\numberwithin{equation}{section}
\newcommand{\abs}[1]{\lvert#1\rvert}
\newcommand {\SN} {{\mathbb N}}
\newcommand {\SR} {{\mathbb R}}
\newcommand {\SQ} {{\mathbb Q}}
\newcommand {\SZ} {{\mathbb Z}}
\newcommand{\be}{\begin{equation}}
\newcommand{\ee}{\end{equation}}
\newcommand{\bea}{\begin{eqnarray}}
\newcommand{\eea}{\end{eqnarray}}
\newcommand{\bv}\boldsymbol{}
\DeclareMathOperator{\vol}{Vol}
\begin{document}

\title[Localized factorizations of integers]
{Localized factorizations of integers}

\author{Dimitris Koukoulopoulos}
\address{Department of  Mathematics\\
University of Illinois at Urbana-Champaign\\
1409 West Green Street\\
Urbana\\ IL 61801\\ U.S.A.} \email{{\tt dkoukou2@math.uiuc.edu}}

\subjclass[2000]{Primary: 11N25}

\date{\today}

\maketitle

\begin{abstract} We determine the order of magnitude of
$H^{(k+1)}(x,\bv y,2\bv y)$, the number of integers $n\le x$ that
are divisible by a product $d_1\cdots d_k$ with $y_i<d_i\le 2y_i$,
when the numbers $\log y_1,\dots,\log y_k$ have the same order of
magnitude and $k\ge 2$. This generalizes a result by Kevin Ford when
$k=1$. As a corollary of these bounds, we determine the number of
elements up to multiplicative constants that appear in a
$(k+1)$-dimensional multiplication table as well as how many
distinct sums  of $k+1$ Farey fractions there are modulo 1.
\end{abstract}

\section{Introduction}\label{intro} Let $k$ be a fixed positive integer. Take all numbers up to
$N$ and form all possible products $n_1\cdots n_{k+1}$ with $n_i\le
N$ for all $i$. Obviously, there will be many numbers that appear
numerous times in this $(k+1)$-dimensional multiplication table. A
natural question arising is how many distinct integers there are in
the table. This question was first posed by Erd\H os (see
\cite{erd1}\;and \cite{erd2}) when $k=1$. Motivated by this problem
we define
$$A_{k+1}(N)=\lvert\{n_1\cdots n_{k+1}:n_i\le
N\;(1\le i\le k+1)\}\rvert.$$ The key to understanding the
combinatorics of $A_{k+1}$ is the counting function of localized
factorizations
$$H^{(k+1)}(x,\bv y,\bv z):=\lvert\{n\le x:\tau_{k+1}(n,\bv y,\bv z)\ge 1\}\rvert,$$ where
$$\tau_{k+1}(n,\bv y,\bv z):=|\{(d_1,\dots,d_k):d_1\cdots
d_k|n,\;y_i<d_i\le z_i\;(1\le i\le k)\}|$$ and $\bv y$ and $\bv z$
are $k$-dimensional vectors. The transition from $H^{(k+1)}$ to
$A_{k+1}$ is achieved via the elementary inequalities
\be\label{s1e1}\begin{split}&H^{(k+1)}\biggl(\frac{N^{k+1}}{2^k},\left(\frac N2,\dots,\frac N2\right),(N,\dots,N)\biggr)\le A_{k+1}(N)\\
&\le\sum_{\substack{1\le 2^{m_i}\le N\\1\le i\le
k}}H^{(k+1)}\biggl(\frac{N^{k+1}}{2^{m_1+\cdots+m_k}},\biggl(\frac
N{2^{m_1+1}},\dots,\frac N{2^{m_k+1}}\biggr), \biggl(\frac
N{2^{m_1}},\dots,\frac N{2^{m_k}}\biggr)\biggr).\end{split}\ee When
$k=1$, Ford \cite{kf2}, improving upon estimates of Tenenbaum
\cite{ten}, showed that
$$H^{(2)}(x,y,2y)\asymp\frac x{(\log y)^{Q(\frac1{\log2})}(\log\log
y)^{3/2}}\quad(3\le y\le\sqrt{x}),$$ where
$$Q(u):=\int_1^u\log t\,dt=u\log u-u+1\quad(u>0).$$ As a consequence, he proved that
$$A_2(N)\asymp\frac{N^2}{(\log N)^{Q(\frac1{\log2})}(\log\log N)^{3/2}}\quad(N\ge
3).$$ In the present paper we generalize this result by Ford to the
function $H^{(k+1)}$. Set $\rho=(k+1)^{1/k}$. Then we prove the
following theorem which gives the order of magnitude of
$H^{(k+1)}(x,\bv y,2\bv y)$ when all the numbers $\log
y_1,\dots,\log y_k$ have the same order of magnitude. This suffices
for the application to $A_{k+1}$.

\begin{thm}\label{th1} Let $k\ge 2$ and $0<\delta\le1$. Consider $x\ge
1$ and $3\le y_1\le y_2\le\cdots\le y_k$ with $2^{k+1}y_1\cdots
y_k\le x/y_1^\delta$. Then
$$H^{(k+1)}(x,\bv y,2\bv y)\ll_{k,\delta}\biggl(\frac{\log y_k}{\log y_1}\biggr)^{k+1}\frac
x{(\log y_1)^{Q(\frac1{\log\rho})}(\log\log y_1)^{3/2}}.$$
Furthermore, if we assume that $y_k\le y_1^c$ for some $c\ge 1$,
then
$$H^{(k+1)}(x,\bv y,2\bv
y)\gg_{k,\delta,c}\frac x{(\log y_1)^{Q(\frac1{\log\rho})}(\log\log
y_1)^{3/2}},$$ and consequently 
$$
H^{(k+1)}(x,\bv y,2\bv y)
	\asymp_{k,\delta,c} \frac x{(\log y_1)^{Q(\frac1{\log\rho})}(\log\log y_1)^{3/2}}.
$$
\end{thm}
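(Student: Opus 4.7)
The plan is to follow Ford's strategy from the case $k=1$, with the central new phenomenon being the appearance of $\rho=(k+1)^{1/k}$ in place of $\rho=2$. A typical $n\le x$ counted by $H^{(k+1)}$ factors as $n=d_1\cdots d_k m$ with $d_i\in(y_i,2y_i]$ and $m=n/(d_1\cdots d_k)\le x/(y_1\cdots y_k)$, so the $k+1$ parts play symmetric roles. Optimising a Rankin-type parameter over these $k+1$ ``slots'' is what ultimately forces $\rho^k=k+1$, and the upper-bound factor $(\log y_k/\log y_1)^{k+1}$ reflects the cost of decoupling the $k+1$ slots when their scales differ; the assumption $y_k\le y_1^c$ in the lower bound keeps all slots comparable in log scale so that this cost disappears.

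\textbf{Upper bound.} First I would slice each interval $(y_i,2y_i]$ and the range of the cofactor $m$ into sub-dyadic pieces, reducing to the essentially balanced case $y_1=\cdots=y_k=y$ at a cost accounted for by the factor $(\log y_k/\log y_1)^{k+1}$. In this balanced case, apply Rankin's majorisation $\mathbf{1}[\tau_{k+1}(n,\bv y,2\bv y)\ge 1]\le \tau_{k+1}(n,\bv y,2\bv y)^\sigma$ with a parameter $\sigma\in(0,1)$, and dominate the moment through the multiplicative function $(k+1)^{\Omega(\cdot)}$ restricted to divisors of size $\asymp y$. Estimating the resulting sum via Selberg--Sathe/Shiu-type bounds yields an upper estimate that is a function of $\sigma$ and $y$; optimising in $\sigma$ gives the unique critical value $\sigma=1/\log\rho$, namely the $\sigma$ for which $\rho^\sigma=e$, and produces the main exponent $Q(1/\log\rho)$. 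The extra $(\log\log y)^{-3/2}$ factor then demands a finer localisation of $\omega(n)$ near its typical value $(1/\log\rho)\log\log y$, via a Gaussian correction to the Selberg--Sathe estimate, in the spirit of Ford's refinement of Tenenbaum's bound.

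\textbf{Lower bound.} For the matching lower bound, I would construct an explicit family of $n\le x$ verifying $\tau_{k+1}(n,\bv y,2\bv y)\ge 1$. Following the Erd\H os--Ford philosophy, take $n$ supported on primes $p$ whose values $\log\log p$ are geometrically distributed over a suitable range, so that among the subproducts of the prime factors of $n$ one can simultaneously select $d_1,\dots,d_k$ with $d_i\in(y_i,2y_i]$. A Hardy--Ramanujan-type enumeration of admissible $n$, combined with a $k$-dimensional local limit estimate for the random walks $\sum_j \epsilon_j^{(i)}\log p_j$ landing in the prescribed dyadic intervals, delivers the matching lower bound with the sharp $(\log\log y)^{3/2}$ factor. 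The hypothesis $y_k\le y_1^c$ is used here to ensure all the walks are concentrated on a single log-scale.

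\textbf{Main obstacle.} The hardest part is the sharp lower bound: the construction must achieve the exponent $Q(1/\log\rho)$ with $\rho=(k+1)^{1/k}$ while simultaneously meeting all $k$ interval constraints. This amounts to a joint local limit problem for $k$ correlated random walks sharing the same underlying prime factorisation of $n$, a correlation structure that has no analogue in Ford's $k=1$ analysis. Quantifying these correlations precisely and showing that they do not degrade the sharp constants is the principal technical challenge of the theorem.
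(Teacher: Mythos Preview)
Your sketch reads plausibly at a distance but misses the paper's central technical innovation, and for the lower bound the approach you outline would actually fail.

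\textbf{Lower bound.} The paper does begin with a construction (Lemma~3.2), reducing matters to a lower bound for $\sum_a L^{(k+1)}(a)\phi(a)/a^2$, where $L^{(k+1)}(a)$ is the Lebesgue measure of $\{\bv u:\tau_{k+1}(a,e^{\bv u},2e^{\bv u})\ge1\}$. The heart of the argument is then to bound $L^{(k+1)}$ from below on average via a H\"older inequality (Lemma~3.4)
\[
\sum_a\frac{\tau_{k+1}(a)}{a}\le\Bigl(\frac1{(\log2)^k}\sum_a\frac{L^{(k+1)}(a)}{a}\Bigr)^{1-1/P}\Bigl(\sum_a\frac{W_{k+1}^P(a)}{a}\Bigr)^{1/P},
\]
where $W_{k+1}^P$ is essentially a $P$-th moment of the local divisor count. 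The crucial point, stated explicitly in the introduction, is that the natural choice $P=2$ (Cauchy--Schwarz, which is what any second-moment or local-limit style argument ultimately rests on) \emph{fails for $k>3$}: the second moment $M_2(a)$ is too large. The paper is forced to take $P\in(1,2)$, specifically $P=\min\bigl\{2,\,(k+1)^2(\log\rho)^2/((k+1)^2(\log\rho)^2-1)\bigr\}$, and then to control the non-integer moment $W_{k+1}^P$ by a further H\"older step and a combinatorial argument (Lemmas~3.5--3.7) exploiting a triangular structure in the system of constraints $|\log(d_i'/d_i)|<\log2$. Your ``joint local limit for $k$ correlated random walks'' does not touch this issue: the obstruction is not correlation between the $k$ walks but clustering of the divisor tuples, and the fractional-moment machinery that defeats it has no analogue in Ford's $k=1$ analysis.

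\textbf{Upper bound.} The paper does not use a Rankin majorant $\tau_{k+1}^\sigma$ at all. It again works through $L^{(k+1)}$: Lemmas~4.1--4.2 reduce to bounding $S^{(k+1)}(y_1)=\sum_{P^+(a)\le y_1,\,\mu^2(a)=1}L^{(k+1)}(a)/a$, and Lemmas~4.3--4.5 estimate this via uniform order statistics. The sharp factor $(\log\log y_1)^{-3/2}$ comes from the probability $\asymp 1/m$ that an ordered uniform sample $0\le\xi_1\le\cdots\le\xi_m\le1$ satisfies $\xi_j\ge(j-O(1))/m$ for all $j$ (Lemma~5.1), not from a Gaussian refinement of Selberg--Sathe. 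A moment method of the kind you describe recovers the exponent $Q(1/\log\rho)$ but, already for $k=1$, loses a power of $\log\log y$; this was precisely the gap between Tenenbaum's and Ford's results.
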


As a corollary we obtain the order of magnitude of $A_{k+1}$ for
every fixed $k\ge2$.

\begin{cor}\label{cor1} Let $k\ge2$. For every $N\ge 3$ we have that
$$
A_{k+1}(N)\asymp_k\frac{N^{k+1}}{(\log N)^{Q(\frac1{\log\rho})}(\log\log N)^{3/2}}.
$$
\end{cor}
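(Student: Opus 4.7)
The plan is to deduce Corollary~\ref{cor1} from Theorem~\ref{th1} through the sandwich inequality (\ref{s1e1}).

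For the lower bound I apply Theorem~\ref{th1} to $H^{(k+1)}(N^{k+1}/2^k,(N/2,\ldots,N/2),(N,\ldots,N))$ with $y_1=\cdots=y_k=N/2$ (so I may take $c=1$) and $\delta=1$. The hypothesis $2^{k+1}y_1\cdots y_k\le x/y_1^\delta$ reduces to $(N/2)^\delta\le N/2$, which holds for all $N\ge 6$; the finitely many small cases $3\le N\le 5$ are absorbed into the implicit constant. Theorem~\ref{th1} then immediately yields $A_{k+1}(N)\gg_k N^{k+1}/((\log N)^{Q(1/\log\rho)}(\log\log N)^{3/2})$.

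For the upper bound I work with the sum on the right of (\ref{s1e1}). Since $H^{(k+1)}$ is symmetric under simultaneous permutations of the coordinates of $\bv y$ and $\bv z$, I restrict to $m_1\ge\cdots\ge m_k\ge 0$ (losing only a factor $k!$), so that $y_1=N/2^{m_1+1}$ is the smallest coordinate. Writing $x_{\bv m}=N^{k+1}/2^{m_1+\cdots+m_k}$, a direct calculation gives $2^{k+1}y_1\cdots y_k=2x_{\bv m}/N$, so the hypothesis of Theorem~\ref{th1} is satisfied for every $\delta\le 1$ as long as $y_1\ge 3$. I split the summation at $m_1=C\log_2\log N$ for a constant $C=C(k)$. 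In the main range $m_1\le C\log_2\log N$ one has $\log y_1=\log N-O(\log\log N)\asymp\log N$, so $(\log y_k/\log y_1)^{k+1}=1+o(1)$ and $(\log y_1)^{Q(1/\log\rho)}(\log\log y_1)^{3/2}\asymp(\log N)^{Q(1/\log\rho)}(\log\log N)^{3/2}$; Theorem~\ref{th1} then bounds each term by a constant multiple of $2^{-(m_1+\cdots+m_k)}N^{k+1}/((\log N)^{Q(1/\log\rho)}(\log\log N)^{3/2})$, and the geometric sum $\sum_{m_1\ge\cdots\ge m_k\ge 0}2^{-(m_1+\cdots+m_k)}$ is $O(1)$. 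In the tail $m_1>C\log_2\log N$, Theorem~\ref{th1} (when $y_1\ge 3$) or the trivial bound $H^{(k+1)}\le x_{\bv m}$ (when $y_1<3$) yields $H^{(k+1)}\ll(\log N)^{k+1}x_{\bv m}$; summing produces at most $N^{k+1}(\log N)^{k+1}\cdot 2^{-C\log_2\log N}=N^{k+1}(\log N)^{k+1-C}$, which is absorbed into the target bound once $C\ge k+1+Q(1/\log\rho)$.

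The main obstacle is the factor $(\log y_k/\log y_1)^{k+1}$ appearing in the upper bound of Theorem~\ref{th1}: it can swell to order $(\log N)^{k+1}$ when $y_1$ is small, and naively would destroy the estimate. The key insight is that this blow-up is tamed precisely by the geometric decay $2^{-m_1}$ coming from (\ref{s1e1}); the splitting threshold $C\log_2\log N$ must be chosen large enough relative to $Q(1/\log\rho)$ so that neither the main range nor the tail overshoots the target order $N^{k+1}/((\log N)^{Q(1/\log\rho)}(\log\log N)^{3/2})$.
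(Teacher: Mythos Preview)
Your overall strategy is right, and for the upper bound it is a valid variant of the paper's argument, but there is an arithmetic slip in the lower bound that breaks the proof as written.

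For the lower bound you take $x=N^{k+1}/2^k$ and $y_i=N/2$. Then $2^{k+1}y_1\cdots y_k=2N^k$, while $x/y_1^\delta=(N^{k+1}/2^k)/(N/2)^\delta$, so the hypothesis of Theorem~\ref{th1} becomes $(N/2)^\delta\le N/2^{k+1}$, not $(N/2)^\delta\le N/2$. With $\delta=1$ this reads $N/2\le N/2^{k+1}$, which fails for every $k\ge1$. The fix is immediate: take any fixed $\delta<1$ (for instance $\delta=1/2$), and then the hypothesis holds for $N\ge 2^{2k+1}$, with the remaining small $N$ absorbed in the constant. A second minor point: in the tail of your upper-bound argument, $C=k+1+Q(1/\log\rho)$ gives only $N^{k+1}(\log N)^{-Q(1/\log\rho)}$, which is a factor $(\log\log N)^{3/2}$ too large; you need $C>k+1+Q(1/\log\rho)$.

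For comparison, the paper handles the upper bound more simply: it splits according to whether $2^{m_i+1}\le\sqrt{N}$ for \emph{all} $i$. In that range every $y_i\ge\sqrt N$, so $\log y_k/\log y_1\le 2$ and $\log y_1\asymp\log N$ immediately, with no need to track the blow-up factor; outside that range the trivial bound $H^{(k+1)}\le x_{\bv m}$ already gives a saving of $N^{-1/2}$, which overwhelms any power of $\log N$. Your threshold $m_1\asymp\log\log N$ forces you to balance $(\log N)^{k+1}$ against the geometric decay, which works but is less direct.
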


\begin{proof} We apply Theorem \ref{th1}\;to the left inequality of
\eqref{s1e1}\;to obtain the lower bound. For the upper bound we apply
Theorem \ref{th1}\;to the right inequality of \eqref{s1e1}\;if
$2^{m_i+1}\le\sqrt{N}$ for all $i=1,\dots,k$. Note that in this case
$N/ 2^{m_i+1} \le ( N/ 2^{m_j+1})^2$ for all $i$
and $j$ in $\{1,\dots,k\}$. Otherwise, we use the trivial bound $H^{(k+1)}(x,\bv y,2\bv y)\le x$.
\end{proof}

\medskip

{\bf Another application.} In \cite{hh}\;Haynes and Homma study the
set
$$F_R(k+1):=\biggl\{\frac{b_1}{r_1}+\cdots+\frac{b_{k+1}}{r_{k+1}}\pmod 1:1\le b_i\le r_i\le R,\;(b_i,r_i)=1\;(1\le i\le
k+1)\biggr\},$$ namely the set of distinct sums modulo 1 of $k+1$
Farey fractions of order $R$. They show that
$$|F_R(2)|\asymp\frac{R^4}{(\log R)^{Q(\frac1{\log2})}(\log\log R)^{3/2}}.$$
To estimate $|F_R(k+1)|$ for an arbitrary $k$ we need the following
theorem.

\begin{thm}\label{th2} Let $k\ge 1$, $0<\delta\le1$ and $c\ge 1$. Consider $x\ge
1$ and $3\le y_1\le y_2\le\cdots\le y_k$ with $2^{k+1}y_1\cdots
y_k\le x/y_1^\delta$ and $y_k\le y_1^c$. Then
$$\widetilde{H}^{(k+1)}(x,\bv y,2\bv y):=\sum_{\substack{x/2<n\le x,\mu^2(n)=1\\\tau_{k+1}(n,\bv y,2\bv y)\ge
1}}\frac{\phi(n)}{n}\gg_{k,\delta,c}\frac x{(\log
y_1)^{Q(\frac1{\log\rho})}(\log\log y_1)^{3/2}}.$$
\end{thm}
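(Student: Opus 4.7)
The plan is to refine the lower bound construction used in the proof of Theorem \ref{th1} (and in Ford \cite{kf2} when $k = 1$) so that the integers produced are automatically squarefree, land in the dyadic window $(x/2, x]$, and carry a $\phi(n)/n$-weight of order $1$. Recall that those constructions furnish a family of integers of the form $n = d_1 \cdots d_k \cdot m$, where $(d_1,\dots,d_k)$ is a squarefree, pairwise coprime $k$-tuple with $d_i \in (y_i, 2y_i]$, and $m$ is coprime to $D := d_1 \cdots d_k$, with total count $\gg x/[(\log y_1)^{Q(1/\log\rho)}(\log\log y_1)^{3/2}]$.

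First, to guarantee $n \in (x/2, x]$, I would restrict the multiplier to the dyadic window $m \in (x/(2D), x/D]$; this loses at most a constant factor by a dyadic subtraction (the same construction applied to $x/2$ gives a comparable bound, and the two differ by a quantity of the desired shape). To enforce $\mu^2(n) = 1$, it suffices to additionally require $\mu^2(m) = 1$, since $D$ is squarefree and $(D, m) = 1$. For the weight, factor $\phi(n)/n = (\phi(D)/D)(\phi(m)/m)$ and appeal to the standard Mertens/sieve estimate
\[
\sum_{\substack{x/(2D) < m \le x/D\\ (m,D) = 1,\, \mu^2(m) = 1}} \frac{\phi(m)}{m} \,\asymp\, \frac{x}{D} \prod_{p \mid D} \bigl(1 + 1/p\bigr)^{-1},
\]
valid because $x/D \ge y_1^\delta/2^{k+1}$ is large by hypothesis. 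Provided $\sum_{p \mid D} 1/p = O_k(1)$ uniformly, both $\phi(D)/D$ and the product above are bounded between two positive constants, so summing over the admissible $D$'s reproduces, up to a multiplicative constant, exactly the sum that yielded the lower bound in Theorem \ref{th1}, finishing the proof.

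The crux is therefore to verify that the construction of Theorem \ref{th1} can be arranged so that every $D$ it produces satisfies $\sum_{p \mid D} 1/p \ll_k 1$. I expect this reduces to inspecting the anatomy-of-integers construction underlying Theorem \ref{th1} and insisting on a ``bounded-primes-per-dyadic-layer'' condition on the prime factors of each $d_i$, which should cost only constant factors since the primes composing each $d_i$ in the construction already sit in geometrically separated ranges. This careful prime-anatomy accounting is the main technical obstacle; once it is in place, the remaining ingredients---dyadic localization of $m$, imposing squarefreeness, and translating from counting measure to the $\phi(n)/n$-weight---are entirely routine.
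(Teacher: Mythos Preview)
Your high-level plan---build $n$ explicitly so that squarefreeness, the dyadic window, and $\phi(n)/n\asymp 1$ come for free, then count---is exactly the paper's strategy. But there is a genuine gap in the proposal.

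First, a dependency issue: in this paper the lower bound of Theorem~\ref{th1} is \emph{deduced from} Theorem~\ref{th2}, not the other way around. There is no independent ``construction from Theorem~\ref{th1}'' to invoke as a black box; that construction \emph{is} the proof of Theorem~\ref{th2}. So your plan amounts to ``re-run the construction and notice it already gives the stronger conclusion,'' which is fine, but then you must actually carry out the construction and the counting, not cite them.

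Second, you have located the difficulty in the wrong place. The conditions you flag as the ``main technical obstacle''---squarefreeness, dyadic localization, $\sum_{p\mid D}1/p=O_k(1)$---are essentially free once the construction is written down: the paper takes $n=ap_1\cdots p_kb$ with $a$ supported on primes in the intervals $D_j$ for $j\ge N$, the $p_i>y_1^{1-\delta/4}$, and $b$ rough with at most one prime in $(y_1^{\delta/4},2y_1^c]$; the check $\phi(a)/a\ge 1/2$ is then the one-line product~\eqref{s3e21b}. The substantial work is what your proposal hides inside ``total count $\gg x/[(\log y_1)^{Q(1/\log\rho)}(\log\log y_1)^{3/2}]$'': reducing to the sum $\sum_a\frac{\phi(a)}{a}\frac{L^{(k+1)}(a)}{a}$ (Lemma~\ref{s3l2}), bounding it below via a $P$-th moment H\"older inequality with $1<P\le 2$ (Lemma~\ref{s3l4}), the combinatorial estimates for $W_{k+1}^P$ (Lemmas~\ref{s3l6a}--\ref{s3l7}), and the order-statistics volume bound (Lemmas~\ref{s3l8}--\ref{s3l9}). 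None of this appears in the proposal. Also, your coarse decomposition $n=Dm$ does not by itself bound the multiplicity of representations; the paper's finer structure $n=ap_1\cdots p_kb$, together with the restriction on the prime factors of $b$, is what ensures each $n$ is counted $O_k(1)$ times.
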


Observe that the above theorem is stronger than the lower bound in
Theorem \ref{th1}. As a corollary we obtain the order of magnitude
of the cardinality of $F_R(k+1)$ for every fixed $k\ge2$.

\begin{cor}\label{cor2} Let $k\ge2$. For every $R\ge3$ we have that $$|F_R(k+1)|\asymp_{k}\frac{R^{2k+2}}{(\log
R)^{Q(\frac1{\log\rho})}(\log\log R)^{3/2}}.$$
\end{cor}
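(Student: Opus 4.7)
\textbf{Proof plan for Corollary~\ref{cor2}.} The plan is to treat the two bounds separately: the lower bound will combine Theorem~\ref{th2} with the Chinese Remainder Theorem, while the upper bound will follow from Corollary~\ref{cor1} after observing that the reduced denominator of any element of $F_R(k+1)$ is a $(k+1)$-fold product of integers $\le R$. The weight $\phi(n)/n$ built into Theorem~\ref{th2} is precisely what, upon multiplication by $n\asymp R^{k+1}$, supplies the extra factor of $R^{k+1}$ that converts the factorization count $R^{k+1}/L$ into the Farey-sum count $R^{2k+2}/L$, where I write $L:=(\log R)^{Q(1/\log\rho)}(\log\log R)^{3/2}$.

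For the lower bound, I will set $y_1=\cdots=y_k:=R/4$ and take $x$ equal to a suitable constant multiple of $R^{k+1}$, chosen so that whenever $n\le x$ admits divisors $d_1,\ldots,d_k$ with $R/4<d_i\le R/2$, the quotient $d_{k+1}:=n/(d_1\cdots d_k)$ is still at most $R$. With this choice (and say $\delta=1/2$, $c=1$), the hypotheses of Theorem~\ref{th2} hold for $R$ sufficiently large in terms of $k$, and yield $\sum_n \phi(n)/n\gg_k R^{k+1}/L$, where $n$ ranges over squarefree integers in $(x/2,x]$ with $\tau_{k+1}(n,\bv y,2\bv y)\ge 1$. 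For each such $n$, squarefreeness forces the witness factorization $n=d_1\cdots d_{k+1}$ to have pairwise coprime parts, so writing $\sum_i b_i/d_i=\bigl(\sum_i b_i(n/d_i)\bigr)/n$ and applying CRT, the $\phi(n)$ tuples $(b_i)$ with $(b_i,d_i)=1$ produce $\phi(n)$ distinct reduced fractions of denominator $n$ belonging to $F_R(k+1)$. Fractions from different $n$ are automatically distinct, so summing yields $|F_R(k+1)|\ge\sum_n\phi(n)\ge(x/2)\sum_n\phi(n)/n\gg_k R^{2k+2}/L$.

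For the upper bound, every element of $F_R(k+1)$ is a reduced fraction $a/n$ whose denominator $n$ divides some product $r_1\cdots r_{k+1}$ with $r_i\le R$. Distributing each prime-power factor of $n$ across the $r_i$ (possible because $v_p(n)\le\sum_i v_p(r_i)$) shows that $n$ itself factors as $d_1\cdots d_{k+1}$ with $d_i\le R$, i.e.\ $n$ lies in the set of values counted by $A_{k+1}(R)$. Since there are only $\phi(n)\le n\le R^{k+1}$ reduced fractions of denominator $n$, Corollary~\ref{cor1} gives $|F_R(k+1)|\le R^{k+1}A_{k+1}(R)\asymp_k R^{2k+2}/L$, as required. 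The main obstacle in the argument is the verification of the hypotheses of Theorem~\ref{th2} for the specific $(x,\bv y)$ described above, but this amounts to a short direct computation; small values of $R$ are absorbed into the implicit constants.
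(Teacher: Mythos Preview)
Your proposal is correct and follows essentially the same route as the paper: the upper bound via $|F_R(k+1)|\le R^{k+1}A_{k+1}(R)$ together with Corollary~\ref{cor1}, and the lower bound via Theorem~\ref{th2} applied to squarefree $n$ in a dyadic interval of size $\asymp R^{k+1}$, converting $\sum\phi(n)/n$ into $\sum\phi(n)$ by the trivial bound $n\ge x/2$. The only difference worth noting is that the paper invokes a result from \cite{hh} stating that $F_R(k+1)$ is exactly the set of reduced fractions $b/r$ with $r$ a product of $k+1$ pairwise coprime integers each $\le R$, and then reads off both inclusions from that; you instead prove the two inclusions you need directly (CRT for the lower bound, the prime-power distribution argument for the upper bound), which makes your argument self-contained but is otherwise the same idea with cosmetically different parameters ($y_i=R/4$ versus the paper's $y_i=R/2$).
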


\begin{proof} In Corollary 2 in \cite{hh}\;it was shown that
\be\begin{split}F_R(k+1)=\biggl\{\frac br:&1\le b\le
r,\;(b,r)=1,~r=r_1\cdots r_{k+1},\nonumber\\
&r_i\le R~(1\le i\le k+1),~(r_i,r_j)=1\;(1\le i<j\le
k+1)\biggr\}.\end{split}\nonumber\ee Therefore if we set
$$\mathscr{A}_{k+1}(N)=\{n_1\cdots n_{k+1}:n_i\le N\;(1\le i\le
k+1)\}$$ so that $A_{k+1}(N)=|\mathscr{A}_{k+1}(N)|$, then
\be\label{s1e2}|F_R(k+1)|\le\sum_{r\in\mathscr{A}_{k+1}(R)}\phi(r)\le
R^{k+1}A_{k+1}(R)\ll_k\frac{R^{2k+2}}{(\log
R)^{Q(\frac1{\log\rho})}(\log\log R)^{3/2}},\ee by Corollary
\ref{cor1}. Moreover, \be\label{s1e3}\begin{split}|F_R(k+1)|&\ge
\sum_{\substack{r\in\mathscr{A}_{k+1}(R)\\\frac{R^{k+1}}{2^{k+1}}<r\le\frac{R^{k+1}}{2^k},\mu^2(r)=1}}\phi(r)
\ge\frac{R^{k+1}}{2^{k+1}}\sum_{\substack{r\in\mathscr{A}_{k+1}(R)\\\frac{R^{k+1}}{2^{k+1}}<r\le
\frac{R^{k+1}}{2^k},\mu^2(r)=1}}\frac{\phi(r)}r\\
&\ge\frac{R^{k+1}}{2^{k+1}}
\widetilde{H}^{(k+1)}\biggl(\frac{R^{k+1}}{2^k},\left(\frac R2,\dots,\frac R2\right),(R,\dots,R)\biggr)\\
&\gg_k\frac{R^{2k+2}}{(\log R)^{Q(\frac1{\log\rho})}(\log\log
R)^{3/2}},\end{split}\ee by Theorem \ref{th2}. Combine inequalities
\eqref{s1e2}\;and \eqref{s1e3}\;to complete the proof.
\end{proof}

\medskip

{\bf Heuristic argument.} We now present a heuristic argument for
Theorem \ref{th1}, first given by Ford \cite{kf1}\;for the case
$k=1$. Before we start developing the heuristic we introduce some
notation. For $a\in\SN$ set
$$\tau_{k+1}(a)=|\{(d_1,\dots,d_k):d_1\cdots d_k|a\}|,$$
\be\begin{split}\mathcal{L}^{(k+1)}(a)=&\{\bv y\in\SR^k:\tau_{k+1}(a,(e^{y_1},\dots,e^{y_k}),2(e^{y_1},\dots,e^{y_k}))\ge 1\}\nonumber\\
=&\bigcup_{d_1\cdots d_k|a}[\log(d_1/2),\log
d_1)\times\cdots\times[\log(d_k/2),\log d_k),\end{split}\nonumber\ee
and
$$L^{(k+1)}(a)=\vol(\mathcal{L}^{(k+1)}(a)),$$ where ``$\vol$" is here the
$k$-dimensional Lebesgue measure. Assume now that $\log
y_1,\dots,\log y_k$ have the same order of magnitude. Let $n\in\SN$.
Write $n=ab$, where
$$a=\prod_{p^e\|n,p\le2y_1}p^e.$$ For simplicity assume that $a$ is
square-free and that $a\le y_1^C$ for some large constant $C$.
Consider the set
$$D_{k+1}(a)=\{(\log d_1,\dots,\log d_k):d_1\cdots
d_k|a\}.$$ If $D_{k+1}(a)$ was well-distributed in $[0,\log a]^k$,
then we would expect that
$$\tau_{k+1}(a,\bv y,2\bv
y)\approx\tau_{k+1}(a)\frac{(\log 2)^k}{(\log
a)^k}\approx\frac{(k+1)^{\omega(a)}}{(\log y_1)^k},$$ which is
$\ge1$ when $\omega(a)\ge m:=\left\lfloor\frac{\log\log
y_1}{\log\rho}\right\rfloor+O(1)$. We expect that
$$\lvert\{n\le x:\omega(a)=r\}\rvert\approx\frac x{\log
y_1}\frac{(\log\log y_1)^r}{r!}$$ (for the upper bound see Theorem
08 in \cite{Hall_Ten}). Therefore, heuristically, we should have
$$H^{(k+1)}(x,\bv y,2\bv y)\approx\frac x{\log y_1}\sum_{r\ge
m}\frac{(\log\log y_1)^r}{r!}\asymp\frac x{(\log
y_1)^{Q(\frac1{\log\rho})}(\log\log y_1)^{\frac12}}.$$ Comparing the
above estimate with Theorem \ref{th1}\;we see that we are off by a
factor of $\log\log y_1$. The problem arises from the fact that
$D_{k+1}(a)$ is usually not well-distributed, but it has many
clumps. A way to measure this is the quantity $L^{(k+1)}(a)$ defined
above. Consider $n$ with $\omega(a)=m$ and write $a=p_1\cdots p_m$
with $p_1<\cdots<p_m$. We expect that the primes $p_1,\dots,p_m$ are
uniformly distributed on a $\log\log$ scale (see chapter 1 of
\cite{Hall_Ten}), that is
$$\log\log p_j\sim j\frac{\log\log y_1}m=j\log\rho+O(1).$$ But,
by the Law of the Iterated Logarithm (see Theorem 11 in
\cite{Hall_Ten}), we expect deviations from the mean value of the
order of $\sqrt{\log\log y_1}$. In particular, with probability
tending to 1 there is a $j$ such that
$$\log\log p_j\le j\log\rho-\sqrt[3]{\log\log y_1}.$$ The elements of
$D_{k+1}(a)$ live in $(k+1)^{m-j}$ translates of the set
$D_{k+1}(p_1\cdots p_j)$. Therefore
$$L^{(k+1)}(a)\le(k+1)^{m-j}L^{(k+1)}(p_1\cdots p_j)\lesssim(k+1)^{m}\exp\{-k\sqrt[3]{\log\log y_1}\},$$ which
is much less than $\tau_{k+1}(a)=(k+1)^{m}$. So we must focus on
abnormal numbers $n$ for which
$$\log\log p_j\ge j\log\rho-O(1)\;\;\;(1\le j\le m).$$ The probability that an $n$
has this property is about $\frac1m$ (Ford, \cite{kf4}). Thus we are
led to the refined estimate that
$$H^{(k+1)}(x,\bv y,2\bv y)\asymp\frac{x}{(\log
y_1)^{Q(\frac1{\log\rho})}(\log\log y_1)^{\frac32}},$$ which turns
out to be the correct one. It is worthwhile noticing here that the
exponent $3/2$ of $\log\log y_1$ appears for the same reason for all
$k$.

\medskip

The proof of the upper bound in Theorem \ref{th1}\;is a
generalization of the methods used in \cite{kf1}\;and \cite{kf2}.
However, the methods used in these papers to get the lower bound
fail when $k>3$. To illustrate what we mean we first make some
definitions. For $\bv u=(u_1,\dots,u_k)$ set $e^{\bv
u}=(e^{u_1},\dots,e^{u_k}).$ Also, let
$$M_p(a)=\int_{\SR^k}\tau_{k+1}(a,e^{\bv u},2e^{\bv u})^pd\bv u.$$
For $\bv d=(d_1,\dots,d_k)$ let $\chi_{\bv d}$ be the characteristic
function of the $k$-dimensional cube $[\log\frac{d_1}2,\log
d_1)\times\cdots\times[\log\frac{d_k}2,\log d_k)$ and observe that
\be\label{introe1}\begin{split}M_p(a)&=\sum_{d_1\cdots
d_k|a}\int_{\SR^k}\left(\sum_{e_1\cdots e_k|a}\chi_{\bv d}(\bv
u)\chi_{\bv
e}(\bv u)\right)^{p-1}d\bv u\\
&\approx\sum_{d_1\cdots d_k|a}\int_{\SR^k}\chi_{\bv d}(\bv u)|\{\bv
e:e_1\cdots e_k|a,|\log(d_i/e_i)|\le\log2\;(1\le i\le
k)\}|^{p-1}d\bv u\\
&\approx\sum_{d_1\cdots d_k|a}|\{\bv e:e_1\cdots
e_k|a,|\log(d_i/e_i)|\le\log2\;(1\le i\le k)\}|^{p-1}.\end{split}\ee
In particular,
$$M_1(a)\approx\tau_{k+1}(a)$$ and $$M_2(a)\approx|\{(\bv d,\bv
e):d_1\cdots d_k|a,e_1\cdots e_k|a,|\log(e_i/d_i)|\le\log2\;(1\le
i\le k)\}|.$$ The main argument in \cite{kf2}\;uses the first and
second moments $M_1(a)$ and $M_2(a)$, respectively, to bound
$L^{(2)}(a)$ from below. However, when $k>3$, $M_2(a)$ is too large
and the method breaks down. This forces us to consider $p$-th
moments for $p\in(1,2)$. The problem is that, whereas $M_1(a)$ and
$M_2(a)$ have a straightforward combinatorial interpretation, as
noticed above, $M_p(a)$ does not. In a sense, the first moment
counts points in the space and the second moment counts pairs of
points. So philosophically speaking, for $p\in(1,2)$ $M_p(a)$ counts
something between single points and pairs (this is captured by the
fractional exponent $p-1$ in the right hand side of
\eqref{introe1}). To deal with this obstruction we apply H\"older's
inequality in a way that allows us to continue using combinatorial
arguments.

\medskip

{\bf Acknowledgements.} The author would like to thank his advisor
Kevin Ford for constant guidance and support and for pointing out
paper \cite{hh}, as well as for discussions that led to a
simplification of the proof of Lemma \ref{s2l1}.

\section{Preliminary Results}\label{prelim}

\textbf{Notation.} Let $\omega(n)$ denote the number of distinct
prime factors of $n$. Let $P^+(n)$ and $P^-(n)$ be the largest and
smallest prime factors of $n$, respectively. Adopt the notational
conventions $P^+(1)=0$ and $P^-(1)=\infty$. For $1\le y\le x$ we use
the standard notation $\mathscr{P}(y,x)=\{n\in\SN:p|n\Rightarrow
y<p\le x\}.$ Finally, constants implied by $\ll,\;\gg$ and $\asymp$
might depend on several parameters, which will always be specified
by a subscript.

\medskip

We need some results from number theory and analysis. We start with
a sieve estimate.

\begin{lemma}\label{s2l1}We have that
\be\label{s2e1}\lvert\{n\le x:P^-(n)>z\}\rvert\ll\frac x{\log
z}\quad(1<z\le x)\ee and \be\label{s2e2}\sum_{\substack{x/2<n\le
x,\mu^2(n)=1\\P^-(n)>z}}\frac{\phi(n)}n\gg\frac x{\log
z}\quad(2<2z\le x).\ee
\end{lemma}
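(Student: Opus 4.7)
I will prove the two assertions of Lemma~\ref{s2l1} separately.

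The upper bound \eqref{s2e1} is a classical sieve estimate. When $z \le \sqrt{x}$, an application of a standard upper-bound sieve (Selberg's $\Lambda^2$ or Brun's pure sieve) gives
\[
|\{n \le x : P^-(n) > z\}| \ll x \prod_{p \le z}\left(1 - \frac{1}{p}\right),
\]
which is $\ll x/\log z$ by Mertens' theorem. When $z > \sqrt{x}$, every composite $n \le x$ has a prime factor not exceeding $\sqrt{x} < z$, so the sifted set is contained in $\{1\} \cup \{\text{primes in }(z, x]\}$; the bound $\pi(x) \ll x/\log x$ then suffices, since $\log z \le \log x$.

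For the lower bound \eqref{s2e2}, I split at $z = \sqrt{x/2}$. When $\sqrt{x/2} \le z \le x/2$, I restrict the sum to primes $p \in (x/2, x]$. Each such $p$ is squarefree, satisfies $P^-(p) = p > x/2 \ge z$, and contributes $\phi(p)/p \ge 1/2$; Chebyshev's lower bound gives $\pi(x) - \pi(x/2) \gg x/\log x$, and since $\log z \asymp \log x$ in this range, this partial contribution is already $\gg x/\log z$.

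When $2 < 2z < \sqrt{2x}$, the pointwise inequality $\phi(n)/n \ge c$ may fail on the sifted set, and I would proceed by Möbius inversion. Writing $\phi(n)/n = \sum_{d \mid n}\mu(d)/d$ and swapping summations,
\[
\sum_{\substack{x/2 < n \le x\\ \mu^2(n) = 1,\, P^-(n) > z}} \frac{\phi(n)}{n} = \sum_{\substack{d \ge 1\\ \mu^2(d) = 1,\, P^-(d) > z}} \frac{\mu(d)}{d}\, M(d),
\]
where $M(d)$ counts squarefree $m \in (x/(2d), x/d]$ coprime to $d$ with $P^-(m) > z$. A fundamental-lemma type sieve estimate yields $M(d) = (x/d)(\phi(d)/d) H(z)(1 + o(1))$ with $H(z) \asymp 1/\log z$; inserting this and collapsing the signed sum produces the convergent Euler product $\prod_{p > z}(1 - (p-1)/p^3)$ multiplied by a constant times $x/\log z$, and since this product is a positive absolute constant, we obtain the claimed bound.

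\textbf{Expected difficulty.} The main obstacle is exercising sufficient uniform error control in the Möbius-twisted sum: the sieve-theoretic asymptotic for $M(d)$ must be refined enough that the aggregate error is genuinely $o(x/\log z)$ across all $d$. Should that prove unwieldy, a cleaner alternative is to first establish the unweighted lower bound $|\{n \in (x/2, x] : \mu^2(n) = 1,\, P^-(n) > z\}| \gg x/\log z$ by a direct sieve construction (e.g., Rosser--Iwaniec), and then argue by an independent averaging computation (using the same Euler product $\prod_{p > z}(1 - 1/p^2 + 1/p^3)$) that $\phi(n)/n$ has a positive mean over this set, avoiding explicit Möbius signed manipulation.
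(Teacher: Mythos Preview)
Your treatment of \eqref{s2e1} matches the paper's: both simply cite the standard upper-bound sieve (with the large-$z$ case being trivial).

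For \eqref{s2e2} the paper takes a different and much shorter route. It first records (as in your alternative) the unweighted sieve lower bound
\[
\bigl|\{x/2<n\le x:\mu^2(n)=1,\ P^-(n)>z\}\bigr|\gg\frac{x}{\log z},
\]
then quotes a standard Hall--Tenenbaum upper bound
\[
\sum_{\substack{n\le x\\P^-(n)>z}}\frac{n}{\phi(n)}\ll\frac{x}{\log z},
\]
and finishes with Cauchy--Schwarz: writing $S$ for the set above,
\[
|S|^2=\Bigl(\sum_{n\in S}\sqrt{\tfrac{\phi(n)}{n}}\,\sqrt{\tfrac{n}{\phi(n)}}\Bigr)^2
\le\Bigl(\sum_{n\in S}\frac{\phi(n)}{n}\Bigr)\Bigl(\sum_{n\in S}\frac{n}{\phi(n)}\Bigr),
\]
whence $\sum_{n\in S}\phi(n)/n\ge|S|^2\big/\sum_{n\in S}n/\phi(n)\gg x/\log z$. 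This bypasses entirely the signed M\"obius sum and the uniform error control for $M(d)$ that you correctly flagged as the sticking point; no fundamental-lemma asymptotic and no case split on $z$ are needed. Your M\"obius-inversion route can be made to work (the tail $d>D$ contributes $O(x/D)$ since $M(d)\le x/d$, and for small $d$ one has enough room for a sieve asymptotic when $z\le x^{1/2-\epsilon}$), but it is noticeably heavier. Your alternative plan is in spirit exactly what the paper does; the missing concrete device that turns ``$\phi(n)/n$ has positive mean on $S$'' into a one-line deduction is the Cauchy--Schwarz pairing with $n/\phi(n)$.
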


\begin{proof} Inequality \eqref{s2e1}\;is a standard application
of sieve methods (see Theorem 06 in \cite{Hall_Ten}\;or Theorem 8.4
in \cite{halb}). By similar methods we may also show that
\be\label{s2e3}\lvert\{x/2<n\le
x:\mu^2(n)=1,P^-(n)>z\}\rvert\gg\frac x{\log z}\;\;\;(2<2z\le x).\ee
Moreover, by Theorem 01 in \cite{Hall_Ten}\;we have that
$$\sum_{\substack{n\le x\\P^-(n)>z}}\frac{n}{\phi(n)}\ll\frac x{\log
z}.$$ A simple application of the Cauchy-Schwarz inequality
completes the proof of \eqref{s2e2}.
\end{proof}

Moreover, we need the following estimates for certain averages of arithmetic functions that satisfy a growth condition of
multiplicative nature. The first part of the next lemma is an easy corollary of \cite[Lemma 2]{NT} (setting the implicit polynomial $Q$ to be 1). For the sake of completeness and because the authors in \cite{NT} work in much greater generality, we repeat the required part of their argument here. See also \cite{kf1} (the arXiv version, with reference number math.NT/0607473) for an alternative proof of part (b).

\renewcommand{\labelenumi}{(\alph{enumi})}

\begin{lemma}\label{s2l4} Let $f:\SN\to[0,+\infty)$ be an arithmetic function. Assume that there
exists a constant $C_f$ depending only on $f$ such that $f(ap)\le
C_ff(a)$ for all $a\in\SN$ and all primes $p$ with $(a,p)=1$.
\begin{enumerate}
\item Let $A\in\SR$ and $3/2\le y\le x\le z^C$ for some $C>0$. Then
$$
\sum_{\substack{a\in\mathscr{P}(y,x)\\\mu^2(a)=1,\, a>z}}\frac{f(a)}a\log^A(P^+(a))
	\ll_{C_f,A,C}\exp\left\{-\frac{\log z}{\log x}\right\}(\log x)^A
		\sum_{\substack{a\in\mathscr{P}(y,x)\\\mu^2(a)=1}} \frac{f(a)}a.
$$

\item Let $3/2\le y\le x$, $h\ge 0$ and $\epsilon>0$. Then
$$
\sum_{\substack{a\in\mathscr{P}(y,x)\\\mu^2(a)=1}}\frac{f(a)}{a \log^h(P^+(a)+x^\epsilon/a)}
	\ll_{C_f,h,\epsilon}\frac1{(\log x)^h}\sum_{\substack{a\in\mathscr{P}(y,x)\\\mu^2(a)=1}}\frac{f(a)}a.
$$
\end{enumerate}
\end{lemma}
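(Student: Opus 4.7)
Both parts hinge on Rankin's upper-bound trick combined with a comparison of ``Euler-product'' sums derived from the hypothesis $f(ap)\le C_f f(a)$. Iterating this hypothesis gives $f(a)\le C_f^{\omega(a)}f(1)$ for squarefree $a$, so for any $\sigma\ge 0$ the shifted sum $\sum_{a\in\mathscr{P}(y,x),\,\mu^2(a)=1}f(a)a^\sigma/a$ is dominated by $f(1)\prod_{y<p\le x}(1+C_f p^\sigma/p)$. Comparing this product at $\sigma=1/\log x$ and at $\sigma=0$, taking logarithms, and using the bound $p^\sigma-1\le(e-1)\sigma\log p$ (valid when $p\le x$) together with Mertens' theorem yields the key estimate
\be
\sum_{\substack{a\in\mathscr{P}(y,x)\\ \mu^2(a)=1}}\frac{f(a)\,a^{1/\log x}}{a}\ll_{C_f}\sum_{\substack{a\in\mathscr{P}(y,x)\\ \mu^2(a)=1}}\frac{f(a)}{a}.
\ee

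For part (a), I would apply Rankin's inequality $\mathbf{1}_{a>z}\le e^{-\log z/\log x}\,a^{1/\log x}$ and then bound $\log^A(P^+(a))\le(\log x)^A$ pointwise, which gives the claim for $A\ge 0$ after invoking the displayed estimate above. For $A<0$, the auxiliary function $f'(a):=f(a)\log^A(P^+(a))$ still satisfies $f'(ap)\le C_f f'(a)$, because $t\mapsto\log^A t$ is decreasing for $A<0$ and $\max(P^+(a),p)\ge P^+(a)$; applying the same Rankin argument to $f'$ in place of $f$, and using the hypothesis $x\le z^C$ (which forces $\log x/\log z\le C$) to absorb a bounded constant $C_{A,C}$ when transferring the $\log^A(P^+(a))$ weight onto $(\log x)^A$, closes this sub-case.

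For part (b), I would split the sum by whether $Q(a):=P^+(a)+x^\epsilon/a\ge x^{\epsilon/(2h+2)}$. In the ``large-$Q$'' piece, $\log^h Q(a)\gg_{h,\epsilon}(\log x)^h$ and the bound is immediate from the unshifted sum. In the complementary piece one has simultaneously $P^+(a)\le x^{\epsilon/(2h+2)}$ and $a\ge x^{\epsilon(2h+1)/(2h+2)}$; here I would apply Rankin's trick against the lower bound on $a$ with $\sigma=1/\log x$ to pick up a decaying factor $\exp(-\epsilon(2h+1)/(2h+2))<1$ that overwhelms the trivial denominator bound $\log^h Q(a)\ge(\log y)^h$, and then invoke the Euler-product comparison restricted to primes $p\le x^{\epsilon/(2h+2)}$ to recover the unshifted sum.

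The principal technical obstacle I anticipate is the $A<0$ sub-case of part (a): the pointwise bound on $\log^A(P^+(a))$ goes the wrong direction and one must carefully exploit the range condition $x\le z^C$ to absorb the resulting loss into the $A$- and $C$-dependent constant. Once this is handled, both parts reduce to routine manipulations of Rankin's trick and Mertens-type prime sums.
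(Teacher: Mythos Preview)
Your derivation of the key displayed estimate
\[
\sum_{\substack{a\in\mathscr{P}(y,x)\\ \mu^2(a)=1}}\frac{f(a)\,a^{1/\log x}}{a}\ll_{C_f}\sum_{\substack{a\in\mathscr{P}(y,x)\\ \mu^2(a)=1}}\frac{f(a)}{a}
\]
has a genuine gap. By fully iterating to $f(a)\le C_f^{\omega(a)}f(1)$ you bound the left side by $f(1)\prod_{y<p\le x}(1+C_fp^\sigma/p)$, and your Euler-product comparison shows this is $\ll_{C_f} f(1)\prod_{y<p\le x}(1+C_f/p)$. But this last product is \emph{not} $\ll\sum f(a)/a$; it is only an \emph{upper} bound for that sum. (Take $f(1)=1$ and $f(a)=0$ for $a>1$: the right side of your key estimate equals $1$, while your Euler product can be as large as $(\log x/\log y)^{C_f}$.) The paper avoids this by never fully unfolding $f$: it writes $a^\sigma=\sum_{d\mid a}g(d)$ with $g(p)=p^\sigma-1\ll\sigma\log p$, swaps to $\sum_{cd}f(cd)g(d)/(cd)$, and applies the hypothesis only on the $d$-part, $f(cd)\le C_f^{\omega(d)}f(c)$. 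This preserves $\sum_c f(c)/c$ intact and multiplies it by $\prod(1+C_fg(p)/p)=e^{O(C_f)}$.

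Your handling of the case $A<0$ in part~(a) is also not yet a proof. Passing to $f'(a)=f(a)\log^A P^+(a)$ and running Rankin yields a bound by $e^{-\log z/\log x}\sum f(a)\log^A P^+(a)/a$, but for $A<0$ each term here is at least $(\log x)^A f(a)/a$, so you cannot simply ``absorb a bounded constant'' to reach $(\log x)^A\sum f(a)/a$; the inequality points the wrong way, and the condition $x\le z^C$ does not help. The paper instead decomposes according to $p=P^+(a)$ \emph{before} applying Rankin, uses the parameter $\sigma=1/\log p$ rather than $1/\log x$, and is then left with $\sum_{p\le x}(\log p)^A p^{-1}\exp(-\log z/\log p)$. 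A dyadic split in $\log p$ handles both signs of $A$ simultaneously, because the factor $\exp(-\log z/\log p)$ decays doubly exponentially as $p$ shrinks and overwhelms any growth from $(\log p)^A$. Your part~(b) inherits the same issue: in the small-$Q$ range you obtain only a constant saving from Rankin, not the required $(\log x)^{-h}$; the paper gets the factor $(\log x)^{-h}$ precisely by invoking the $A=-h$ case of part~(a).
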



\begin{proof} (a) We have that
\be\label{pr e100}\begin{split}
\sum_{\substack{a\in\mathscr{P}(y,x)\\\mu^2(a)=1,\, a>z}}\frac{f(a)}a\log^A(P^+(a))
	&= \sum_{y<p\le x} \frac{(\log p)^A}{p} \sum_{\substack{ b\in\mathscr{P}(y,p)\\ \mu^2(bp)=1, \, b>z/p}}\frac{f(bp)}{b} \\
	&\le C_f \sum_{y<p\le x} \frac{(\log p)^A}{p} 
		\sum_{\substack{ b\in\mathscr{P}(y,p)\\ \mu^2(b)=1, b>z/p}}\frac{f(b)}{b} .
\end{split}\ee
Fix some $p\in(y,x]$. Then we have that 
\be\label{pr e101}
\sum_{\substack{b \in\mathscr{P}(y,p) \\ \mu^2(b)=1,\, b>z/p }} \frac{f(b)}{b}
	\le \exp\left\{ - \frac{\log(z/p)}{\log p} \right\} 
		\sum_{\substack{b\in\mathscr{P}(y,p) \\ \mu^2(b)=1}}\frac{f(b)}{b^{1-1/ \log p }}  .
\ee
We write $n^{1/\log p}=(1*g)(n)$, where $g(p')=(p')^{1/\log p}-1 \ll \log p' / \log p $ for every prime $p'\le p$. Therefore
\begin{align*}
\sum_{\substack{b\in\mathscr{P}(y,p)\\ \mu^2(b)=1}}\frac{f(b)}{b^{1-1/ \log p }}
	&= \sum_{\substack{b\in\mathscr{P}(y,p)\\ \mu^2(b)=1}}\frac{f(b)}{b}
		\sum_{d|b} g(d) 
		=\sum_{\substack{c,d\in\mathscr{P}(y,p)\\ \mu^2(cd)= 1 }} \frac{f(cd) g(d)}{cd}  \\
	&\le \sum_{\substack{c,d\in\mathscr{P}(y,p)\\ \mu^2(cd)= 1 }} \frac{f(c) C_f^{\omega(d)} g(d) }{cd} 
	\le \sum_{\substack{c \in\mathscr{P}(y,p)\\ \mu^2(c)= 1 }} \frac{f(c) }{c} 
		\prod_{y<p'\le p} \left( 1+ \frac{C_f g(p')}{p'} \right) .
\end{align*}
Since
\[
\log \prod_{y<p'\le p} \left( 1+ \frac{C_f g(p')}{p'} \right)  
	\le \sum_{p'\le p} \frac{ C_f g(p') }{p'}
	\ll \sum_{p'\le p} \frac{C_f \log p'}{p'\log p} \ll C_f,
\]
we conclude that
\[
\sum_{\substack{b\in\mathscr{P}(y,p)\\ \mu^2(b)=1}}\frac{f(b)}{b^{1-1/ \log p }}
	\le e^{O(C_f)} \sum_{\substack{b \in\mathscr{P}(y,p)\\ \mu^2(b)= 1 }} \frac{f(b) }{b}  .
\]
Inserting this inequality into \eqref{pr e101}, we arrive to the estimate
\[
\sum_{\substack{a\in\mathscr{P}(y,p)\\\mu^2(a)=1,\, a> z/p }}\frac{f(a)}{a}
	\ll_{C_f} \exp\left\{ - \frac{\log z}{\log p}\right\} 
		\sum_{\substack{a \in\mathscr{P}(y,p)\\ \mu^2(a)= 1 }} \frac{f(a) }{a} ,
\]
for every prime $p\in(y,x]$. Thus \eqref{pr e100} becomes
\[
\sum_{\substack{a\in\mathscr{P}(y,x)\\\mu^2(a)=1,\, a>z}}\frac{f(a)}a\log^A(P^+(a))
	\ll_{C_f} \sum_{y<p\le x} \frac{(\log p)^A}{p} \exp\left\{ - \frac{\log z}{\log p}\right\}
		\sum_{\substack{a \in\mathscr{P}(y,p)\\ \mu^2(a)= 1 }} \frac{f(a) }{a} .
\]
Finally, note that  $\mu := z^{1/\log x} \ge e^{1/C}>1$. Hence
\begin{align*}
\sum_{p\le x}\frac{(\log p)^A}{p}  \exp\left\{-\frac{\log z}{\log p}\right\}
	\le\sum_{0\le n\le \frac{\log x }{\log 2} } \frac{1}{\mu^{2^n}}
	\sum_{ \frac{\log x}{2^{n+1}}< \log p \le \frac{\log x}{2^n} } \frac{(\log p)^A}p   
	&\ll_A (\log x)^A\sum_{n=0}^\infty\frac1{\mu^{2^n} 2^{An} } \\
	&\ll_{A,C} \frac{(\log x)^A }{\mu}  ,
\end{align*}
and part (a) follows.

\medskip

(b) Write
$$\sum_{\substack{a\in\mathscr{P}(y,x)\\\mu^2(a)=1}}\frac{f(a)}{a\log^h(P^+(a)+x^\epsilon/a)}=T_1+T_2,$$ where
$$T_1=\sum_{\substack{a\in\mathscr{P}(y,x)\\\mu^2(a)=1,a\le
x^{\epsilon/2}}}\frac{f(a)}{a\log^h(P^+(a)+x^\epsilon/a)}\quad{\rm
and}\quad
T_2=\sum_{\substack{a\in\mathscr{P}(y,x)\\\mu^2(a)=1,a>x^{\epsilon/2}}}\frac{f(a)}{a\log^h(P^+(a)+x^\epsilon/a)}.$$
Then clearly $$T_1\ll_{h,\epsilon}(\log
x)^{-h}\sum_{\substack{a\in\mathscr{P}(y,x)\\\mu^2(a)=1}}\frac{f(a)}a$$
as well as
$$T_2\le\sum_{\substack{a\in\mathscr{P}(y,x)\\\mu^2(a)=1,a>x^{\epsilon/2}}}\frac{f(a)}a\log^{-h}(P^+(a))
\ll_{C_f,h,\epsilon}(\log
x)^{-h}\sum_{\substack{a\in\mathscr{P}(y,x)\\\mu^2(a)=1}}\frac{f(a)}a,$$
by part (a), and the desired result follows.
\end{proof}

Finally, we need a covering lemma which is a slightly different
version of Lemma 3.15 in \cite{fol}. If $r$ is a positive real
number and $I$ is a $k$-dimensional rectangle, then we denote with
$r I$ the rectangle which has the same center with $I$ and $r$ times
its diameter. More formally, if $\bv{x_0}$ is the center of $I$,
then $r I:=\{r(\bv x-\bv{x_0})+\bv{x_0}:\bv x\in I\}.$ The lemma is
then formulated as follows.

\begin{lemma}\label{s2l3} Let $I_1,...,I_N$ be
$k$-dimensional cubes of the form
$[a_1,b_1)\times\cdots\times[a_k,b_k)$ $(b_1-a_1=\cdots=b_k-a_k>0)$.
Then there exists a sub-collection $I_{i_1},\dots,I_{i_M}$ of
mutually disjoint cubes such that
$$\bigcup_{n=1}^NI_n\subset\bigcup_{m=1}^M3I_{i_m}.$$
\end{lemma}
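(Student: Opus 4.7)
The plan is to use a standard Vitali-type greedy selection argument. First I would reorder the cubes so that their side lengths are non-increasing, i.e.\ $s(I_1)\ge s(I_2)\ge\cdots\ge s(I_N)$, where $s(I)$ denotes the common side length of the cube $I$. Then I would construct the subcollection $\mathcal{S}$ iteratively: start with $\mathcal{S}_0=\emptyset$ and for $n=1,\dots,N$ set
$$
\mathcal{S}_n=\mathcal{S}_{n-1}\cup\{I_n\}\quad\text{if $I_n$ is disjoint from every cube in $\mathcal{S}_{n-1}$},
$$
and $\mathcal{S}_n=\mathcal{S}_{n-1}$ otherwise. Take $\mathcal{S}:=\mathcal{S}_N$; by construction its elements are pairwise disjoint, and since $N$ is finite the process terminates without any issue.

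The geometric content of the argument is the following containment lemma: if $I,I'$ are cubes with $s:=s(I)\le s(I')=:s'$ and $I\cap I'\neq\emptyset$, then $I\subset 3I'$. To see this, pick $p\in I\cap I'$ and let $c'$ be the center of $I'$. For every $q\in I$, measuring in the $\ell^\infty$-norm,
$$
\|q-c'\|_\infty\le \|q-p\|_\infty+\|p-c'\|_\infty\le s+\tfrac{s'}{2}\le\tfrac{3s'}{2},
$$
so $q$ lies in the cube of side $3s'$ centered at $c'$, which is precisely $3I'$ under the paper's definition of the dilation $rI$ as the image of $I$ by the $r$-fold homothety about its center.

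To finish the proof, let $I_n$ be an arbitrary cube of the original collection. If $I_n\in\mathcal{S}$ then $I_n\subset 3I_n$ and we are done. Otherwise, the algorithm rejected $I_n$ at step $n$, which means $I_n$ intersected some $I_m\in\mathcal{S}_{n-1}$ with $m<n$; by the non-increasing ordering we have $s(I_m)\ge s(I_n)$, and the containment lemma yields $I_n\subset 3I_m$. Consequently $\bigcup_{n=1}^N I_n\subset\bigcup_{I\in\mathcal{S}}3I$, as desired.

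I do not foresee a genuine obstacle here: the argument is entirely mechanical once the greedy selection is set up. The only mild subtlety is matching the $\ell^\infty$ estimate to the paper's definition of $rI$ in terms of diameters, which reduces to the trivial observation that homothetic dilation about the center multiplies the side length by $r$. Notably, because the collection is finite, none of the usual delicate points of Vitali-type covering theorems (measurability, countability of the extracted subfamily, etc.) arise.
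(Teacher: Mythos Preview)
Your argument is correct and is precisely the standard Vitali-type greedy selection that the paper has in mind; the paper does not spell out a proof but merely cites Lemma~3.15 of Folland, whose proof is exactly this greedy-by-decreasing-size procedure together with the $\ell^\infty$ triangle inequality you wrote down. The half-open form of the cubes even makes the containment $I\subset 3I'$ strict, so there is no boundary issue to worry about.
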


\begin{rk}The above lemma is very useful in the following sense. If
$A=\bigcup_{i=1}^NI_i\subset\SR^k$ with $I_i$ as in Lemma
\ref{s2l3}, then in order to control a sum of the form
$$\sum_{\substack{(\log p_1,\dots,\log p_k)\in
A\\p_j\;\text{prime}}}f(p_1,\dots,p_k),$$ it suffices to estimate
sums of the form
$$\sum_{e^{a_1}\le p_1<e^{b_1}}\cdots\sum_{e^{a_k}\le
p_k<e^{b_k}}f(p_1,\dots,p_k),$$ which are much easier to handle.
\end{rk}

\section{Lower bounds}\label{lb}

Before we launch into the lower bounds proof we list some
inequalities about $L^{(k+1)}$.

\begin{lemma}\label{s3l1}
\begin{enumerate}
\item$L^{(k+1)}(a)\le\min\{\tau_{k+1}(a)(\log2)^k,(\log a+\log2)^k\}$.\\
\item If $(a,b)=1$, then $L^{(k+1)}(ab)\le\tau_{k+1}(a)L^{(k+1)}(b)$.\\
\item If $p_1,\dots,p_m$ are distinct prime numbers, then
$$L^{(k+1)}(p_1\cdots p_m)\le\min_{1\le j\le m}\{(k+1)^{m-j}(\log(p_1\cdots
p_j)+\log 2)^k\}.$$
\end{enumerate}
\end{lemma}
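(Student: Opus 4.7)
For part (1), I would argue both bounds separately. The bound $L^{(k+1)}(a)\le \tau_{k+1}(a)(\log 2)^k$ is immediate from sub-additivity of volume: $\mathcal{L}^{(k+1)}(a)$ is a union of at most $\tau_{k+1}(a)$ boxes, each of volume $(\log 2)^k$. For the second bound, observe that for any divisor tuple with $d_1\cdots d_k\mid a$ we have $1\le d_i\le a$, so
\[
\Bigl[\log\tfrac{d_i}{2},\log d_i\Bigr)\subset [-\log 2,\log a),
\]
and therefore $\mathcal{L}^{(k+1)}(a)\subset[-\log 2,\log a)^k$, a cube of side length $\log a+\log 2$.

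The key structural step is part (2). Given coprime $a,b$ and any tuple with $d_1\cdots d_k\mid ab$, I would factor uniquely $d_i=d_i'd_i''$ with $d_i'\mid a$ and $d_i''\mid b$; coprimality of $a,b$ ensures that $d_1'\cdots d_k'\mid a$ and $d_1''\cdots d_k''\mid b$. Since $\log d_i=\log d_i'+\log d_i''$, the box $[\log(d_i/2),\log d_i)$ is the translate by $\log d_i'$ of $[\log(d_i''/2),\log d_i'')$. Hence
\[
\mathcal{L}^{(k+1)}(ab)\subset \bigcup_{\substack{d_1'\cdots d_k'\mid a}} \bigl(\log d_1',\dots,\log d_k'\bigr)+\mathcal{L}^{(k+1)}(b),
\]
and applying sub-additivity and translation-invariance of Lebesgue measure gives $L^{(k+1)}(ab)\le \tau_{k+1}(a)L^{(k+1)}(b)$.

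Part (3) is then essentially a corollary. For each fixed $j\in\{1,\dots,m\}$, write $a=p_{j+1}\cdots p_m$ and $b=p_1\cdots p_j$; these are coprime, so part (2) gives $L^{(k+1)}(p_1\cdots p_m)\le \tau_{k+1}(p_{j+1}\cdots p_m)\,L^{(k+1)}(p_1\cdots p_j)$. For a squarefree number $n$ with $\omega(n)=r$, each prime factor independently may appear in $d_1$, in $d_2$, \dots, in $d_k$, or in none of them, giving $\tau_{k+1}(n)=(k+1)^r$; hence $\tau_{k+1}(p_{j+1}\cdots p_m)=(k+1)^{m-j}$. Applying the second bound of part (1) to $L^{(k+1)}(p_1\cdots p_j)$ yields the desired estimate, and taking the minimum over $j$ concludes the proof.

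No step looks like a real obstacle; the only subtlety is the bookkeeping in part (2) to confirm that $d_1'\cdots d_k'\mid a$, which uses the coprimality hypothesis in an essential way.
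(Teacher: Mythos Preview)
Your argument is correct and is exactly the standard one: the paper itself does not spell out a proof but refers to Lemma 3.1 of \cite{kf1}, and your sub-additivity/containing-cube bound for (1), translate decomposition for (2), and combination of the two for (3) are precisely that argument, extended verbatim from $k=1$ to general $k$. There is nothing to add.
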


\begin{proof} The proof is very similar to the proof of Lemma
3.1 in \cite{kf1}.
\end{proof}

\renewcommand{\labelenumi}{(\arabic{enumi})}

For the rest of this section we assume that $y_1>C_1$, where $C_1$
is a large enough positive constant, possibly depending on
$k,\delta$ and $c$; for if $y_1\le C_1$, then
$\widetilde{H}^{(k+1)}(x,\bv y,2\bv y)\gg_{C_1}x$ and Theorem
\ref{th2}\;follows immediately. The value of the constant $C_1$ will
not be specified, but it can be computed effectively if one goes
through the proof.

\medskip

We now prove the following lemma which is the starting point to
obtain a lower bound for $\widetilde{H}^{(k+1)}(x,\bv y,2\bv y)$.
Note that it is similar to Lemma 2.1 in \cite{kf1}\;and Lemma 4.1 in
\cite{kf2}.

\begin{lemma}\label{s3l2} Let $k\ge 1$, $0<\delta<1$ and $c\ge 1$. Then for $x\ge 1$
and $3\le y_1\le y_2\le\cdots\le y_k\le y_1^c$ with
$2^{k+1}y_1\cdots y_k\le x/y_1^\delta$ we have that
\be\label{s3e1}\widetilde{H}^{(k+1)}(x,\bv y,2\bv
y)\gg_{k,\delta,c}\frac{x}{(\log
y_1)^{k+1}}\sum_{\substack{P^+(a)\le
y_1\\\mu^2(a)=1}}\frac{\phi(a)}a\frac{L^{(k+1)}(a)}a.\ee
\end{lemma}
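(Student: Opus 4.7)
The plan is to generalize the Ford-type constructions of \cite{kf1} (Lemma 2.1) and \cite{kf2} (Lemma 4.1). I would count integers of the form $n=a\cdot p_1\cdots p_k\cdot b$, where $a$ is squarefree with $P^+(a)\le y_1$; the $p_i$ are distinct primes, each $>y_1$, chosen so that for some pairwise coprime $k$-tuple $\bv d=(d_1,\ldots,d_k)$ of divisors of $a$ with $d_1\cdots d_k\mid a$ one has $d_ip_i\in(y_i,2y_i]$; and $b$ is a squarefree integer with $P^-(b)>2y_k$, coprime to $ap_1\cdots p_k$, and with $x/2<n\le x$. Any such $n$ has $(d_1p_1,\ldots,d_kp_k)\in\prod_i(y_i,2y_i]$ dividing $n$, so $\tau_{k+1}(n,\bv y,2\bv y)\ge 1$ and $n$ contributes $\phi(n)/n$ to $\widetilde H^{(k+1)}(x,\bv y,2\bv y)$.

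The sum over quadruples $(a,\bv d,\bv p,b)$ factors as a nested sum, handled layer by layer. Fixing $(a,\bv d,\bv p)$, the innermost sum over $b$ yields $\sum_b\phi(b)/b\gg x/(ap_1\cdots p_k\log y_1)$ by Lemma \ref{s2l1}(b); the condition $x/(ap_1\cdots p_k)\gg y_1$ required by the sieve follows from the hypothesis $2^{k+1}y_1\cdots y_k\le x/y_1^\delta$ together with $p_i\le 2y_i/d_i$. Next, the sum over each prime $p_i$ in its window $(\max(y_1,y_i/d_i),\,2y_i/d_i]$ is, by Mertens' theorem, $\asymp 1/\log y_1$, uniformly for $d_i$ in a dominant range where $\log(y_i/d_i)\asymp\log y_1$ (a range secured by $y_k\le y_1^c$). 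Together with the sieve factor, this yields the overall prefactor $x/(\log y_1)^{k+1}$.

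It remains to sum over $(a,\bv d)$, which is the crux of the argument. The naive count is $\tau_{k+1}(a)$ divisor-tuples per $a$, but the effective count is $\asymp L^{(k+1)}(a)/(\log 2)^k$, reflecting that $\mathcal L^{(k+1)}(a)=\bigcup_{\bv d}\prod_i[\log(d_i/2),\log d_i)$ is a union of $\tau_{k+1}(a)$ cubes of volume $(\log 2)^k$ whose total measure is $L^{(k+1)}(a)$. The main obstacle is this deduplication: the same $n$ can arise with witness-tuple $(d_ip_i)_i$ for multiple $\bv d$, so a direct first-moment bound only recovers $\sum_a\phi(a)\tau_{k+1}(a,\bv y,2\bv y)/a^2$ in place of $\sum_a\phi(a)L^{(k+1)}(a)/a^2$. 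I would resolve this by invoking Lemma \ref{s2l3} to extract, for each $a$, a disjoint sub-collection of cubes in $\mathcal L^{(k+1)}(a)$ whose $3$-dilates cover $\mathcal L^{(k+1)}(a)$, so that the sub-collection has total volume $\gg L^{(k+1)}(a)/3^k$; restricting the $\bv d$-sum to this sub-collection controls the overcounting by an absolute constant depending only on $k$. Boundary corrections (e.g.\ $\bv d$-tuples whose prime windows contain no primes exceeding $y_1$, or for which $x/(ap_1\cdots p_k)$ falls below the sieving threshold) contribute negligibly under the standing hypotheses $y_k\le y_1^c$, $2^{k+1}y_1\cdots y_k\le x/y_1^\delta$, and $y_1>C_1$.
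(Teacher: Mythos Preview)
Your overall strategy matches the paper's: build integers $n=ap_1\cdots p_kb$, use Lemma~\ref{s2l3} to extract a disjoint sub-collection of cubes in $\mathcal L^{(k+1)}(a)$, sum over primes in the resulting windows, and sieve for $b$. But your choice of prime threshold creates a genuine gap. Requiring each $p_i>y_1$ means the window $(\max(y_1,y_i/d_i),\,2y_i/d_i]$ is nonempty only when $d_i<2y_i/y_1$. For $i=1$ this forces $d_1<2$, i.e.\ $d_1=1$ always. You are therefore restricting to divisor-tuples with $d_1=1$, which selects only the thin slice of $\mathcal L^{(k+1)}(a)$ near $u_1\in[-\log 2,0)$ rather than the full set, and you cannot recover $L^{(k+1)}(a)$. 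Your remark that tuples with empty prime windows are ``boundary corrections'' is wrong: for $i=1$ they are essentially all of them. The claim that $\sum_{p_i}1/p_i\asymp 1/\log y_1$ uniformly over a ``dominant range'' of $d_i$ thus fails at the very first coordinate.

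The paper fixes this by \emph{not} summing over $P^+(a)\le y_1$ in the construction. Instead it first restricts to $a\le y_1^{\delta/4}$, so that every divisor $d_i\le y_1^{\delta/4}$ and every prime window $(y_i/d_i,2y_i/d_i]$ automatically sits inside $(y_1^{1-\delta/4},2y_1^c]$, a range disjoint from the prime factors of $a$ without needing $p_i>y_1$. The construction then yields the sum over $a\le y_1^{\delta/4}$, and only afterwards is this extended to the full range $P^+(a)\le y_1$ via Lemma~\ref{s2l4}(a) and Lemma~\ref{s3l1}(b). This two-step passage---construct on small $a$, then extend---is the missing ingredient in your sketch. (A secondary point: with primes $p_i$ ranging over $(y_1^{1-\delta/4},2y_1^c]$, the paper also allows $b$ to have $P^-(b)>y_1^{\delta/4}$ with at most one prime in $(y_1^{\delta/4},2y_1^c]$, rather than $P^-(b)>2y_k$, to keep the representation count bounded while leaving enough room for the sieve.)
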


\begin{proof} Set $x^\prime=x/(2^{k+1}y_1\cdots y_k).$ Consider squarefree integers $n=ap_1\cdots
p_kb\in(x/2,x]$ such that

\renewcommand{\labelenumi}{(\arabic{enumi})}

\begin{enumerate}\item $a\le y_1^{\delta/4}$;
\item $p_1,\dots,p_k$ are distinct prime numbers with
$(\log(y_1/p_1),\dots,\log(y_k/p_k))\in\mathcal{L}^{(k+1)}(a)$;
\item $P^-(b)>y_1^{\delta/4}$ and $b$ has at most one prime factor in
$(y_1^{\delta/4},2y_1^c]$.
\end{enumerate}

\renewcommand{\labelenumi}{(\alph{enumi})}

Condition (2) is equivalent to the existence of positive integers
$d_1,\dots,d_k$ such that $d_1\cdots d_k|a$ and $y_i/p_i<d_i\le2
y_i/p_i$, $i=1,\dots k$. In particular, $\tau_{k+1}(n,\bv y,2\bv
y)\ge 1$. Furthermore,
$$y_1^{1-\delta/4}\le\frac
{y_1}a\le\frac{y_i}{d_i}<p_i\le2\frac{y_i}{d_i}\le2y_1^c.$$ Hence
this representation of $n$, if it exists, is unique up to a possible
permutation of $p_1,\dots,p_k$ and the prime factors of $b$ lying in
$(y_1^{\delta/4},2y_1^c]$. Since $b$ has at most one prime factor in
$(y_1^{\delta/4},2y_1^c]$, $n$ has a bounded number of such
representations. Fix $a$ and $p_1,\dots,p_k$ and note that
\be\label{s3e1b}X:=\frac x{ap_1\cdots p_k}\ge\frac
x{y_1^{\delta/4}2^ky_1\cdots y_k}\ge2y_1^{3\delta/4}.\ee Therefore
$$\sum_{b\;\text{admissible}}\frac{\phi(b)}b\ge\frac12\biggl(\sum_{\substack{X/2<b\le
X\\P^-(b)>2y_1^c,\mu^2(b)=1}}\frac{\phi(b)}b+\sum_{\substack{X/2<p\le
X\\p\notin\{p_1,\dots,p_k\}}}\frac{\phi(p)}p\biggr)\gg_{c,\delta}\frac
X{\log y_1},$$ by~\eqref{s3e1b}, Lemma~\ref{s2l1} and the Prime
Number Theorem. Hence \be\label{s3e2}\widetilde{H}^{(k+1)}(x,\bv
y,2\bv y)\gg_{k,\delta,c}\frac{x}{\log y_1}\sum_{\substack{a\le
y_1^{\delta/4}\\\mu^2(a)=1}}\frac{\phi(a)}{a^2}
\sum_{\substack{(\log\frac{y_1}{p_1},...,\log\frac{y_k}{p_k})\in\mathcal{L}^{(k+1)}(a)\\p_1,\dots,p_k\;\text{distinct}}}
\frac{\phi(p_1)\cdots\phi(p_k)}{p_1^2\cdots p_k^2}.\ee Fix $a\le
y_1^{\delta/4}$. Let $\{I_r\}_{r=1}^R$ be the collection of cubes
$[\log(d_1/2),\log d_1)\times\cdots\times[\log(d_k/2),\log d_k)$ for
$d_1\cdots d_k|a$. Then for $I=[\log(d_1/2),\log
d_1)\times\cdots\times[\log(d_k/2),\log d_k)$ in this collection we
have
\be\begin{split}\sum_{(\log\frac{y_1}{p_1},...,\log\frac{y_k}{p_k})\in
I}\frac{\phi(p_1)\cdots\phi(p_k)}{p_1^2\cdots
p_k^2}=\prod_{i=1}^k\sum_{y_i/d_i<p_i\le2y_i/d_i}\frac{\phi(p_i)}{p_i^2}&\gg_k\prod_{i=1}^k\frac1{\log(\max\{2,2y_i/d_i\})}\nonumber\\
&\gg_k\frac1{\log y_1\cdots\log y_k}\end{split}\nonumber\ee because
$d|a$ implies that $d\le y_1^{\delta/4}$. Similarly,
\be\begin{split}\sum_{\substack{(\log\frac{y_1}{p_1},...,\log\frac{y_k}{p_k})\in
I\\p_1,\dots,p_k\;\text{not
distinct}}}\frac{\phi(p_1)\cdots\phi(p_k)}{p_1^2\cdots
p_k^2}&\le\sum_{1\le i<j\le
k}\prod_{\substack{t=1\\t\notin\{i,j\}}}^k\sum_{y_t/d_t<p_t\le2y_t/d_t}\frac1{p_t}\sum_{p_i>\max\{y_i,y_j\}/y_1^{\delta/4}}\frac1{p_i^2}\nonumber\\
&\ll_k\frac1{y_1^{1/2}}\frac1{\log y_1\cdots\log
y_k}.\end{split}\nonumber\ee Thus
$$\sum_{\substack{(\log\frac{y_1}{p_1},...,\log\frac{y_k}{p_k})\in
I\\p_1,\dots,p_k\;\text{distinct}}}\frac{\phi(p_1)\cdots\phi(p_k)}{p_1^2\cdots
p_k^2}\gg_k\frac1{\log y_1\cdots\log y_k}\gg_{k,c}\frac1{(\log
y_1)^k},$$ provided that $C_1$ is large enough. By Lemma \ref{s2l3},
there exists a sub-collection $\{I_{r_s}\}_{s=1}^S$ of mutually
disjoint cubes such that
$$S(\log2)^k=\vol\left(\bigcup_{s=1}^SI_{r_s}\right)\ge\frac
1{3^k}{\rm
Vol}\left(\bigcup_{r=1}^RI_r\right)=\frac{L^{(k+1)}(a)}{3^k}.$$ Hence
$$\sum_{\substack{(\log\frac{y_1}{p_1},...,\log\frac{y_k}{p_k})\in\mathcal{L}^{(k+1)}(a)\\p_1,\dots,p_k\;\text{distinct}}}
\frac{\phi(p_1)\cdots\phi(p_k)}{p_1^2\cdots
p_k^2}\ge\sum_{s=1}^S\sum_{\substack{(\log\frac{y_1}{p_1},...,\log\frac{y_k}{p_k})\in
I_{r_s}\\p_1,\dots,p_k\;\text{distinct}}}\frac{\phi(p_1)\cdots\phi(p_k)}{p_1^2\cdots
p_k^2}\gg_{k,c}\frac{L^{(k+1)}(a)}{(\log y_1)^k},$$ which together
with \eqref{s3e2}\;implies that
\be\label{s3l1e1}\widetilde{H}^{(k+1)}(x,\bv y,2\bv
y)\gg_{k,\delta,c}\frac{x}{(\log y_1)^{k+1}}\sum_{\substack{a\le
y_1^{\delta/4}\\\mu^2(a)=1}}\frac{\phi(a)}a\frac{L^{(k+1)}(a)}a.\ee
Note that the arithmetic function $a\to L^{(k+1)}(a)\phi(a)/a$
satisfies the hypothesis of Lemma \ref{s2l4} with $C_f=k+1$, by Lemma \ref{s3l1}(b). 
Hence if $M=M(k)$ is sufficiently large, then
\be\begin{split}
\sum_{\substack{a\le y_1^{\delta/4}\\\mu^2(a)=1}}\frac{\phi(a)}a\frac{L^{(k+1)}(a)}a
	&\ge\sum_{\substack{P^+(a)\le y_1^{\delta/M}\\\mu^2(a)=1,\, a\le y_1^{\delta/4}}}\frac{\phi(a)}a
		\frac{L^{(k+1)}(a)}a\nonumber\\
	&=\sum_{\substack{P^+(a)\le y_1^{\delta/M}\\\mu^2(a)=1}}\frac{\phi(a)}a\frac{L^{(k+1)}(a)}a
		\left(1+O_k(e^{-M/8})\right)\\
	&\ge\frac12\sum_{\substack{P^+(a)\le y_1^{\delta/M}\\\mu^2(a)=1}}\frac{\phi(a)}a\frac{L^{(k+1)}(a)}a,
\end{split}\ee
by Lemma \ref{s2l4}(a). So \be\begin{split}\sum_{\substack{P^+(a)\le
y_1\\\mu^2(a)=1}}\frac{\phi(a)}a\frac{L^{(k+1)}(a)}a&\le\sum_{\substack{P^+(a_1)\le
y_1^{\delta/M}\\\mu^2(a_1)=1}}\frac{\phi(a_1)}{a_1}\frac{L^{(k+1)}(a_1)}{a_1}
\sum_{\substack{a_2\in\mathscr{P}(y_1^{\delta/M},y_1)\\\mu^2(a_2)=1}}\frac{\tau_{k+1}(a_2)\phi(a_2)}{a_2^2}\nonumber\\
&\le2\sum_{\substack{a_1\le
y_1^{\delta/4}\\\mu^2(a_1)=1}}\frac{\phi(a_1)}{a_1}\frac{L^{(k+1)}(a_1)}{a_1}\prod_{y_1^{\delta/M}<p\le
y_1}\left(1+\frac{(k+1)(p-1)}{p^2}\right)\\
&\ll_{k,\delta}\sum_{\substack{a_1\le
y_1^{\delta/4}\\\mu^2(a_1)=1}}\frac{\phi(a_1)}{a_1}\frac{L^{(k+1)}(a_1)}{a_1},\end{split}\ee
where we used Lemma \ref{s3l1}(b). Inserting the above estimate into
\eqref{s3l1e1}\;completes the proof.
\end{proof}

Given $P\in(1,+\infty)$ and $a\in\SN$ set
$$W_{k+1}^P(a)=\sum_{d_1\cdots
d_k|a}\left\lvert\left\{(d_1',\dots,d_k')\in\SN^k:d_1'\cdots
d_k'|a,\;\left\lvert\log\frac{d_i'}{d_i}\right\rvert<\log2 \;(1\le
i\le k)\right\}\right\rvert^{P-1}.$$

\begin{lemma}\label{s3l4} Let $\mathcal{A}$ be a finite set of positive integers and $P\in(1,+\infty)$.
Then
$$\sum_{a\in\mathcal{A}}\frac{\tau_{k+1}(a)}a\le\biggl(\frac1{(\log2)^k}\sum_{a\in\mathcal{A}}\frac{L^{(k+1)}(a)}a\biggr)^{1-1/P}
\biggl(\sum_{a\in\mathcal{A}}\frac{W_{k+1}^P(a)}a\biggr)^{1/P}.$$
\end{lemma}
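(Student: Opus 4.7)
The plan is to combine a ``pointwise'' (in $a$) Hölder inequality, which bounds $\tau_{k+1}(a)$ by a product of powers of $L^{(k+1)}(a)$ and $W_{k+1}^P(a)$, with a second Hölder inequality on the sum over $\mathcal{A}$.

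First, note that $\tau_{k+1}(a,e^{\bv u},2e^{\bv u}) = \sum_{\bv d} \chi_{\bv d}(\bv u)$, where the sum runs over $\bv d = (d_1,\dots,d_k) \in \SN^k$ with $d_1\cdots d_k \mid a$. This function is supported on $\mathcal{L}^{(k+1)}(a)$ and, since each cube $I_{\bv d}$ has volume $(\log 2)^k$, its integral equals $(\log 2)^k \tau_{k+1}(a)$. Applying Hölder with exponents $P$ and $P/(P-1)$ against the indicator of $\mathcal{L}^{(k+1)}(a)$ gives
\begin{equation*}
(\log 2)^k \tau_{k+1}(a)
= \int_{\SR^k} \tau_{k+1}(a,e^{\bv u},2e^{\bv u}) \, d\bv u
\le \biggl( \int_{\SR^k} \tau_{k+1}(a,e^{\bv u},2e^{\bv u})^P \, d\bv u \biggr)^{\!1/P} L^{(k+1)}(a)^{1-1/P}.
\end{equation*}

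Second, I bound the $L^P$-integral by $W_{k+1}^P(a)$ up to the factor $(\log 2)^k$. Observe that if $\bv u \in I_{\bv d}$, then $e^{u_i} \in [d_i/2, d_i)$ for each $i$, so any $d_i' \in \SN$ satisfying $e^{u_i} < d_i' \le 2e^{u_i}$ automatically satisfies $|\log(d_i'/d_i)| < \log 2$. Thus $\tau_{k+1}(a, e^{\bv u}, 2 e^{\bv u}) \le N(a,\bv d)$ on $I_{\bv d}$, where $N(a,\bv d)$ is the inner cardinality in the definition of $W_{k+1}^P(a)$. Writing $\tau^P = \tau \cdot \tau^{P-1} = \sum_{\bv d} \chi_{\bv d}\, \tau^{P-1}$ and integrating termwise yields
\begin{equation*}
\int_{\SR^k} \tau_{k+1}(a,e^{\bv u},2e^{\bv u})^P \, d\bv u
\le \sum_{d_1\cdots d_k \mid a} (\log 2)^k N(a,\bv d)^{P-1}
= (\log 2)^k W_{k+1}^P(a).
\end{equation*}
Combining the two displays gives the pointwise bound
\begin{equation*}
\tau_{k+1}(a) \le (\log 2)^{-k(1-1/P)} L^{(k+1)}(a)^{1-1/P} W_{k+1}^P(a)^{1/P}.
\end{equation*}

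Finally, I divide by $a$, factor $1/a = (1/a)^{1-1/P}(1/a)^{1/P}$, sum over $a \in \mathcal{A}$, and apply Hölder on this finite sum with the same exponents $P/(P-1)$ and $P$. Absorbing the constant $(\log 2)^{-k(1-1/P)}$ into the first bracket produces exactly the stated inequality. The only non-routine step is recognizing $L^{(k+1)}(a)$ as the measure of the support of $\bv u \mapsto \tau_{k+1}(a, e^{\bv u}, 2e^{\bv u})$ and $W_{k+1}^P(a)$ as a combinatorial upper bound for its $P$-th moment; once these interpretations are identified, the proof is a routine double application of Hölder and requires no further arithmetic input.
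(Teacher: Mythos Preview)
Your proof is correct and follows essentially the same approach as the paper: identify $(\log 2)^k\tau_{k+1}(a)$ as $\int_{\SR^k}\tau_{k+1}(a,e^{\bv u},2e^{\bv u})\,d\bv u$, apply H\"older against the indicator of $\mathcal{L}^{(k+1)}(a)$, and bound the $P$-th moment by $(\log 2)^kW_{k+1}^P(a)$ via the expansion $\tau^P=\sum_{\bv d}\chi_{\bv d}\,\tau^{P-1}$ together with the inclusion $\{\bv d':\chi_{\bv d'}(\bv u)=1\}\subset\{\bv d':|\log(d_i'/d_i)|<\log 2\}$ for $\bv u\in I_{\bv d}$. The only cosmetic difference is that you first obtain a pointwise bound in $a$ and then apply H\"older to the sum over $\mathcal{A}$, whereas the paper writes the ``double application'' of H\"older in one step (effectively treating $\sum_{a}\frac1a\int d\bv u$ as a single measure); the two routes are equivalent and yield the identical inequality.
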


\begin{proof} For $\bv d=(d_1,\dots, d_k)\in\SR^k$ let $\chi_{\bv d}$ be the characteristic
function of the $k$-dimensional cube $[\log(d_1/2),\log
d_1)\times\cdots\times[\log(d_k/2),\log d_k)$. Then it is easy to
see that $$\tau_{k+1}(a,e^{\bv u},2e^{\bv u})=\sum_{d_1\cdots
d_k|a}\chi_{\bv d}(\bv u)$$ for all $a\in\SN$, where $e^{\bv
u}=(e^{u_1},\dots,e^{u_k})$ for $\bv u=(u_1,\dots,u_k)\in\SR^k$.
Hence
$$\int_{\SR^k}\tau_{k+1}(a,e^{\bv u},2e^{\bv u})d\bv
u=\tau_{k+1}(a)(\log2)^k$$ and a double application of H\"older's
inequality yields
\be\label{s3l4e2}(\log2)^k\sum_{a\in\mathcal{A}}\frac{\tau_{k+1}(a)}a
\le\biggl(\sum_{a\in\mathcal{A}}\frac1a\int_{\SR^k}\tau_{k+1}(a,e^{\bv
u},2e^{\bv u})^Pd\bv
u\biggr)^{1/P}\biggl(\sum_{a\in\mathcal{A}}\frac{L^{(k+1)}(a)}a\biggr)^{1-1/P}.\ee
Finally, note that \be\begin{split}\tau_{k+1}(a,e^{\bv u},2e^{\bv
u})^P=\sum_{\substack{d_1\cdots d_k|a\\u_i<\log d_i\le
u_i+\log2\\1\le i\le k}}\tau_{k+1}(a,e^{\bv u},2e^{\bv
u})^{P-1}&=\sum_{\substack{d_1\cdots d_k|a\\u_i<\log d_i\le
u_i+\log2\\1\le i\le k}}\left(\sum_{\substack{d_1'\cdots
d_k'|a\\u_i<\log d_i'\le u_i+\log2\\1\le i\le
k}}1\right)^{P-1}\\
&\le\sum_{\substack{d_1\cdots d_k|a\\u_i<\log d_i\le u_i+\log2\\1\le
i\le k}}\left(\sum_{\substack{d_1'\cdots
d_k'|a\\|\log(d_i'/d_i)|<\log 2\\1\le i\le
k}}1\right)^{P-1}.\nonumber\end{split}\ee So
$$\int_{\SR^k}\tau(a,e^{\bv u},2e^{\bv u})^Pd\bv u\le\sum_{d_1\cdots
d_k|a}\left(\sum_{\substack{d_1'\cdots d_k'|a\\|\log(d_i'/d_i)|<\log
2\\1\le i\le k}}1\right)^{P-1}\int_{\SR^k}\chi_{\bv d}(\bv u)d\bv
u=(\log2)^kW_{k+1}^P(a),$$ which together with
\eqref{s3l4e2}\;completes the proof of the lemma.
\end{proof}

Our next goal is to estimate
$$\sum_{a\in\mathcal{A}}\frac{W_{k+1}^P(a)}a$$ for suitably chosen
sets $\mathcal{A}$. This will be done in Lemmas \ref{s3l6a},
\ref{s3l6b}\;and \ref{s3l7}. First, we introduce some notation.

We generalize the construction of a sequence of primes
$\lambda_1,\lambda_2,...$ found in \cite{kf1}\;and \cite{kf2}. Set
$\lambda_0=\min\{p\;\text{prime}:p\ge k+1\}-1$. Then define
inductively $\lambda_j$ as the largest prime such that
\be\label{s3e3}\sum_{\lambda_{j-1}<p\le\lambda_j}\frac1p\le\log\rho.\ee
Note that $1/(\lambda_0+1)\le1/(k+1)<\log\rho$ because
$(k+1)\log\rho=\frac{k+1}k\log(k+1)$ is an increasing function of
$k$ and $\log4>1$. Thus the sequence $\{\lambda_j\}_{j=1}^\infty$ is
well-defined. Set
$$D_j=\{p\;\text{prime}:\lambda_{j-1}<p\le \lambda_j\}$$ and $\log\lambda_j=\rho^{\mu_j}$ for all $j\in\SN$.
Then we have the following lemma.

\begin{lemma}\label{s3l3}There exists a constant $\ell_k$ such that
$$\mu_j=j+\ell_k+O_k(\rho^{-j})\quad(j\in\SN).$$ In particular, there exists a
positive integer $L_k$ such that $$|\mu_j-j|\le L_k\quad(j\in\SN).$$
\end{lemma}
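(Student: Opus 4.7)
The plan is to translate the defining condition for $\lambda_j$ into a first-order recurrence for $\mu_j$ via Mertens' theorem, then iterate. Since $\lambda_j$ is the largest prime for which $\sum_{\lambda_{j-1}<p\le \lambda_j}1/p \le \log\rho$, the sum in fact equals $\log\rho - O(1/\lambda_j)$, since otherwise the next prime after $\lambda_j$ could still be absorbed without exceeding the threshold. Applying Mertens' estimate $\sum_{p\le x}1/p = \log\log x + M + O(1/\log x)$ at $x=\lambda_j$ and $x=\lambda_{j-1}$, subtracting, and dividing through by $\log\rho$ yields
$$\mu_j = \mu_{j-1} + 1 + O_k(\rho^{-\mu_{j-1}}).$$

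First I would use this recurrence to bootstrap the rough bound $\mu_j = j + O_k(1)$. The key observation is that once $\mu_{j-1}$ exceeds some $k$-dependent threshold $M_0$, the error term is less than $1/2$, forcing $\mu_j - \mu_{j-1} \ge 1/2$ thereafter; so $\mu_j \to \infty$, and in particular $\mu_j \ge j/2 - C_k$ for some $C_k$ and all $j$. With this crude linear lower bound in hand the tail $\sum_{i\ge 0}\rho^{-\mu_i}$ converges geometrically, and summing the recurrence from $0$ to $j$ refines this to $\mu_j = \mu_0 + j + O_k(1)$, i.e., $\alpha_j := \mu_j - j$ is uniformly bounded in $j$.

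Now the rest is straightforward. Since $\alpha_j$ is bounded, $\rho^{-\mu_{j-1}}\ll_k \rho^{-j}$, so the recurrence reads $\alpha_j - \alpha_{j-1} = O_k(\rho^{-j})$, a geometrically summable telescope. Hence $\{\alpha_j\}$ is Cauchy with limit $\ell_k$, and the tail bound
$$|\alpha_j - \ell_k| \le \sum_{i>j} O_k(\rho^{-i}) = O_k(\rho^{-j})$$
gives exactly $\mu_j = j + \ell_k + O_k(\rho^{-j})$. The uniform estimate $|\mu_j-j|\le L_k$ then follows by choosing $L_k$ larger than both $|\ell_k|$ plus the residual error and the finitely many initial discrepancies $|\mu_j-j|$ for $j$ below the threshold where the asymptotics take over.

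The main annoyance will be handling small $k$ (say $k=2$, where $\lambda_0=2$ and $\log\log\lambda_0<0$): the asymptotic recurrence carries a non-negligible error until $\lambda_{j-1}$ is large, and one must first check that $\{\lambda_j\}$ is well-defined and eventually strictly increasing. This is harmless since $\log\rho$ is a fixed positive constant in $k$ while $1/p_{\text{next}}\to 0$ as $\lambda_{j-1}\to\infty$; any finitely many anomalous initial iterates are simply absorbed into the $k$-dependent constants $\ell_k$ and $L_k$.
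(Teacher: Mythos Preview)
Your argument is correct and follows essentially the same route as the paper: derive the recurrence $\mu_j-\mu_{j-1}=1+o(1)$ from the prime-sum asymptotic, bootstrap to $\mu_j=j+O_k(1)$, then telescope to show $\{\mu_j-j\}$ is Cauchy with geometric tail. The only substantive difference is that the paper invokes the Prime Number Theorem with de la Vall\'ee Poussin error term to obtain the stronger step error $O(e^{-c_1\sqrt{\log\lambda_{j-1}}})$, whereas you use Mertens' theorem to get $O_k(\rho^{-\mu_{j-1}})$; your choice is more economical since the final statement only claims $O_k(\rho^{-j})$ anyway, and the paper's extra precision is discarded at the end.
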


\begin{proof} By the Prime Number Theorem with de la Valee
Poussin error term \cite[p. 111]{dav}, there exists some positive
constant $c_1$ such that
\be\label{s3e4}\log_2\lambda_j-\log_2\lambda_{j-1}=\log\rho+O(e^{-c_1\sqrt{\log\lambda_{j-1}}})\ee
for all $j\in\SN$. In addition, $\lambda_j\to\infty$ as
$j\to\infty$, by construction. So if we fix
$\rho^\prime\in(1,\rho)$, then \eqref{s3e4}\;implies that
$$\frac{\log\lambda_j}{\log\lambda_{j-1}}\ge\rho^\prime$$ for sufficiently large $j$,
which in turn implies that that the series
$\sum_je^{-c_1\sqrt{\log\lambda_j}}$ converges. Thus, telescoping
the summation of \eqref{s3e4}\;yields that $\mu_j=j+O_k(1)$, and
hence $\log\lambda_j\gg_k\rho^{j}$. Summing \eqref{s3e4} again, this
time for $j=r+1,\dots,s$, we get that
$$\mu_s-\mu_r=s-r+O_k(\rho^{-r}).$$ Hence
$\{\mu_j-j\}$ is a Cauchy sequence and so it converges, say to
$\ell_k$. Finally, letting $s\to\infty$ gives us the desired result.
\end{proof}

\medskip

For $R\in\SN$ let
$$\mathscr{P}_R=\{(Y_1,\dots,Y_k):Y_i\subset\{1,\dots,R\},Y_i\cap
Y_j=\emptyset\;\text{if}\;i\neq j\}.$$ Also, for $\bv
Y=(Y_1,\dots,Y_k)\in\mathscr{P}_R$ and $\bv
I=(I_1,\dots,I_k)\in\{0,1,\dots,R\}^k$ set $$M_R(\bv Y;\bv
I)=\left\lvert\left\{(Z_1,\dots,Z_k)\in\mathscr{P}_R:\bigcup_{r=j}^k\left(Z_r\cap(I_j,R]\right)
=\bigcup_{r=j}^k\left(Y_r\cap(I_j,R]\right)\;(1\le j\le
k)\right\}\right\rvert.$$ For $\bv
b=(b_1,\dots,b_H)\in(\SN\cup\{0\})^H$ let $\mathcal{A}(\bv b)$ be
the set of square-free integers composed of exactly $b_j$ prime
factors from $D_j$ for each $j$. Set $B=b_1+\cdots+b_H$, $B_0=0$ and
$B_i=b_1+\cdots+b_i$ for all $i=1,\dots,H$. For
$I\in\{0,1,\dots,B\}$ define $E_{\bv b}(I)$ by $B_{E_{\bv
b}(I)-1}<I\le B_{E_{\bv b}(I)}$ if $I>0$ and set $E_{\bv b}(I)=0$ if
$I=0$. Lastly, for $\{X_j\}_{j\in J}$ a family of sets define
$$\mathcal{U}(\{X_j:j\in J\})=\left\{x\in\bigcup_{j\in J}X_j:|\{i\in J:x\in
X_i\}|=1\right\}.$$ In particular,
$$\mathcal{U}(\{X_1,X_2\})=X_1\triangle X_2,$$ the symmetric
difference of $X_1$ and $X_2$, and
$$\mathcal{U}(\emptyset)=\emptyset.$$

\begin{rk}\label{rk1}Assume that $Y_1,\dots,Y_n$ and $Z_1,\dots,Z_n$ satisfy $Y_i\cap Y_j=Z_i\cap Z_j=\emptyset$ for $i\neq j$.
Then the condition
$$\mathcal{U}(\{Y_j\triangle Z_j:1\le j\le n\})=\emptyset$$ is equivalent to
$$\bigcup_{j=1}^nY_j=\bigcup_{j=1}^nZ_j.$$
\end{rk}

\begin{lemma}\label{s3l6a} Let $k\ge1$, $P\in(1,2]$ and $\bv b=(b_1,\dots,b_H)\in(\SN\cup\{0\})^H$.
Then $$\sum_{a\in\mathcal{A}(\bv
b)}\frac{W_{k+1}^P(a)}a\ll_k\frac{(\log\rho)^B}{b_1!\cdots
b_H!}\sum_{0\le I_1,\dots,I_k\le
B}\prod_{j=1}^k(\rho^{P-1})^{-E_{\bv b}(I_j)}\sum_{\bv
Y\in\mathscr{P}_B}\left(M_B(\bv Y;\bv I)\right)^{P-1}.$$
\end{lemma}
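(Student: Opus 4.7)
The plan is to unfold the sum over $\mathcal{A}(\bv b)$ by writing each $a$ as $a = p_1\cdots p_B$, with positions $B_{j-1}+1,\dots, B_j$ reserved for the $b_j$ primes of $D_j$ listed in increasing order; rearranging the unordered sum as an ordered one costs a symmetry factor of $1/(b_1!\cdots b_H!)$. Divisors $d_1\cdots d_k \mid a$ then correspond bijectively to tuples $\bv Y=(Y_1,\dots,Y_k)\in\mathscr{P}_B$ via $d_i=\prod_{\ell\in Y_i}p_\ell$, and similarly $(d_1',\dots,d_k')$ corresponds to some $\bv Z\in\mathscr{P}_B$. The prime sums collapse to $(\log\rho)^B$ after applying the defining inequality $\sum_{p\in D_j}1/p\le\log\rho$ from \eqref{s3e3} block by block.

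The combinatorial heart of the argument is bounding, for each fixed $\bv Y$, the number of $\bv Z$ satisfying $|\log(d_i'/d_i)|<\log 2$ for all $i$. By Lemma \ref{s3l3}, primes in $D_j$ have logarithm $\asymp\rho^j$ and $\sum_{p\in D_1\cup\cdots\cup D_{j-1}}\log p\ll\rho^j$, so the log-size of any divisor is dominated by its largest prime factor. Iterating this across all $k$ coordinates, I would show that any valid $\bv Z$ must agree with $\bv Y$ ``above'' thresholds $\bv I=(I_1,\dots,I_k)\in\{0,1,\dots,B\}^k$ in exactly the nested-union sense that defines $M_B(\bv Y;\bv I)$: the top part of each union $\bigcup_{r=j}^k Y_r$ (above position $I_j$) must coincide with $\bigcup_{r=j}^k Z_r$ restricted above $I_j$. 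The hypothesis $P\in(1,2]$ is used here to split the $(P-1)$-th power across the classification by $\bv I$ via the subadditivity $(a+b)^{P-1}\le a^{P-1}+b^{P-1}$.

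The factor $(\rho^{P-1})^{-E_{\bv b}(I_j)}$ should arise from accounting for the width of the closeness constraint at the $j$-th coordinate. Up to constants $\log p_{I_j}\asymp\rho^{E_{\bv b}(I_j)}$ by Lemma \ref{s3l3}, and the chance that the low-position portion of $Y_j\triangle Z_j$ balances so as to satisfy $|\log(d_j'/d_j)|<\log 2$ is of order $1/\log p_{I_j}$; raising to the $(P-1)$-th power yields the claimed decay. Summing over $\bv I$ and then over $\bv Y$ produces the right-hand side of the lemma.

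The main obstacle will be rigorously establishing the threshold reduction: extracting from each close pair $(\bv Y,\bv Z)$ a canonical choice of $\bv I$ so that the nested-union condition defining $M_B$ faithfully encodes the simultaneous constraints $|\log(d_i'/d_i)|<\log 2$ without overcounting. This is naturally carried out by descending induction on $j=k,k-1,\dots,1$, peeling off the top layer of primes at each stage and exploiting the super-exponential growth $\log\lambda_{j+1}/\log\lambda_j\asymp\rho$ to ensure that the cumulative contribution of smaller primes cannot spoil the closeness relation.
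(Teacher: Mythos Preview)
Your outline captures the right scaffolding (unfold into ordered prime tuples with symmetry factor $1/(b_1!\cdots b_H!)$, the divisor $\leftrightarrow$ partition correspondence, subadditivity of $x\mapsto x^{P-1}$ for $P\le 2$), but there is a structural gap that blocks the argument as written. You propose first to ``collapse the prime sums to $(\log\rho)^B$'' and then, for fixed $\bv Y$, bound the number of admissible $\bv Z$. These two steps cannot be done in that order: the count of $\bv Z$ with $|\log(d_i'/d_i)|<\log 2$ depends on the actual primes $p_1,\dots,p_B$, and what sits between the prime sum and the $\bv Z$-count is the $(P-1)$-th power in $W_{k+1}^P$. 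For fixed primes and fixed $\bv Y$ the inner quantity is $N(\bv p)^{P-1}$ with $N$ a prime-dependent integer, and one cannot average a fractional power directly. The paper inserts a H\"older step,
\[
\sum_{p_1,\dots,p_B}\frac{N(\bv p)^{P-1}}{p_1\cdots p_B}
\;\le\;\Bigl(\sum_{p_1,\dots,p_B}\frac{1}{p_1\cdots p_B}\Bigr)^{2-P}
\Bigl(\sum_{p_1,\dots,p_B}\frac{N(\bv p)}{p_1\cdots p_B}\Bigr)^{P-1},
\]
which linearizes $N$ and allows the interchange $\sum_{\bv Z}\sum_{p_1,\dots,p_B}$ with the closeness constraint \emph{inside} the prime sum. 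Your heuristic ``chance of order $1/\log p_{I_j}$'' is precisely the outcome of that inner constrained prime sum, not something available before averaging.

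A second issue concerns your extraction of thresholds. The claim ``$\sum_{p\in D_1\cup\cdots\cup D_{j-1}}\log p\ll\rho^j$'' is false as stated, and even restricted to the primes of $a$ it fails for arbitrary $\bv b$ (a single block may carry many primes). The paper does \emph{not} use prime sizes to force agreement above thresholds; instead, for fixed $\bv Y,\bv Z$ it builds $I_1,\dots,I_k$ together with a permutation $m_1,\dots,m_k$ of the coordinates by a purely combinatorial recursion on the sets $\mathcal{U}(\{Y_j\triangle Z_j\})$, chosen so that the $k$ constraints become a \emph{triangular} linear system in $\log p_{I_{j_1}},\dots,\log p_{I_{j_n}}$ once the remaining primes are frozen. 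Solving it confines each $p_{I_{j_r}}$ to a bounded-ratio interval, and summing $1/p$ over that interval inside $D_{E_{\bv b}(I_{j_r})}$ is what produces $\rho^{-E_{\bv b}(I_{j_r})}$. Your descending-induction sketch does not address how to decouple the $k$ coupled constraints (the same index may lie in several $Y_j\triangle Z_j$ with conflicting signs), which is exactly the obstacle the $\mathcal{U}$-construction is designed to remove.
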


\begin{proof} Let $a=p_1\cdots p_B\in\mathcal{A}(\bv b)$,
where \be\label{s3e5}p_{B_{i-1}+1},\dots,p_{B_i}\in D_i\quad(1\le
i\le H),\ee and the primes in each interval $D_j$ for $j=1,\dots,H$
are unordered. Observe that, since $p_1\cdots p_B$ is square-free
and has precisely $B$ prime factors, the $k$-tuples
$(d_1,\dots,d_k)\in\SN^k$ with $d_1\cdots d_k|p_1\cdots p_B$ are in
one to one correspondence with $k$-tuples
$(Y_1,\dots,Y_k)\in\mathscr{P}_B$; this correspondence is given by
$$d_j=\prod_{i\in Y_j}p_i\quad(1\le j\le k).$$ Using this observation twice we find that
\be\begin{split}W_{k+1}^P(p_1\cdots
p_B)&=\sum_{(Y_1,\dots,Y_k)\in\mathscr{P}_B}\left\lvert \left\{(d_1',\dots,d_k')\in\SN^k:d_1'\cdots
d_k'|a,   \right.\right.\\
&\qquad\qquad\qquad\qquad\quad
	\left.\left.	\left\lvert\log d_j'-\sum_{i\in Y_j}\log p_i\right\rvert<\log2\;(1\le j\le k)\right\}\right\rvert^{P-1}\nonumber\\
&=\sum_{(Y_1,\dots,Y_k)\in\mathscr{P}_B}
	\left(\sum_{\substack{(Z_1,\dots,Z_k)\in\mathscr{P}_B \\  \eqref{s3e10a}  }}1 \right)^{P-1},
\end{split}\ee
where for two $k$-tuples $(Y_1,\dots,Y_k)$ and $(Z_1,\dots,Z_k)$ in
$\mathscr{P}_B$ condition \eqref{s3e10a}\;is defined by
\be\label{s3e10a}-\log 2<\sum_{i\in Y_j}\log p_i-\sum_{i\in Z_j}\log
p_i<\log 2\quad(1\le j\le k).\ee Moreover, every integer
$a\in\mathcal{A}(\bv b)$ has exactly $b_1!\cdots b_H!$
representations of the form $a=p_1\cdots p_B$, corresponding to the
possible permutations of the primes $p_1,\dots,p_B$ under condition
\eqref{s3e5}. Thus \be\begin{split}\sum_{a\in\mathcal{A}(\bv
b)}\frac{W_{k+1}^P(a)}a&=\frac1{b_1!\cdots
b_H!}\sum_{\substack{p_1,\dots,p_B\\\eqref{s3e5}}}\frac1{p_1\cdots
p_B}\sum_{(Y_1,\dots,Y_k)\in\mathscr{P}_B}\left(\sum_{\substack{(Z_1,\dots,Z_k)\in\mathscr{P}_B\\\eqref{s3e10a}}}1\right)^{P-1}\nonumber\\
&=\frac1{b_1!\cdots
b_H!}\sum_{(Y_1,\dots,Y_k)\in\mathscr{P}_B}\sum_{\substack{p_1,\dots,p_B\\\eqref{s3e5}}}\frac1{p_1\cdots
p_B}
\left(\sum_{\substack{(Z_1,\dots,Z_k)\in\mathscr{P}_B\\\eqref{s3e10a}}}1\right)^{P-1}\\
&\le\frac1{b_1!\cdots
b_H!}\sum_{(Y_1,\dots,Y_k)\in\mathscr{P}_B}\biggl(\sum_{\substack{p_1,\dots,p_B\\\eqref{s3e5}}}
\frac1{p_1\cdots p_B}\biggr)^{2-P}\\
&\quad\quad\quad\quad\times\biggl(\sum_{\substack{p_1,\dots,p_B\\\eqref{s3e5}}}\frac1{p_1\cdots
p_B}\sum_{\substack{(Z_1,\dots,Z_k)\in\mathscr{P}_B\\\eqref{s3e10a}}}1\biggr)^{P-1},\end{split}\ee
by H\"older's inequality if $P<2$ and trivially if $P=2$. Observe
that $$\sum_{\substack{p_1,\dots,p_B\\\eqref{s3e5}}}\frac1{p_1\cdots
p_B}\le\prod_{j=1}^H\left(\sum_{p\in D_j}\frac
1p\right)^{b_j}\le(\log\rho)^B,$$ by \eqref{s3e3}. Consequently,
\be\label{s3e7}\begin{split}\sum_{a\in\mathcal{A}(\bv
b)}\frac{W_{k+1}^P(a)}a&\le\frac{(\log\rho)^{(2-P)B}}{b_1!\cdots
b_H!}\sum_{(Y_1,\dots,Y_k)\in\mathscr{P}_B}
\biggl(\sum_{\substack{p_1,\dots,p_B\\\eqref{s3e5}}}\frac1{p_1\cdots
p_B}\sum_{\substack{(Z_1,\dots,Z_k)\in\mathscr{P}_B\\\eqref{s3e10a}}}1\biggr)^{P-1}\\
&=\frac{(\log\rho)^{(2-P)B}}{b_1!\cdots
b_H!}\sum_{(Y_1,\dots,Y_k)\in\mathscr{P}_B}
\biggl(\sum_{(Z_1,\dots,Z_k)\in\mathscr{P}_B}\sum_{\substack{p_1,\dots,p_B\\\eqref{s3e5},\eqref{s3e10a}}}\frac1{p_1\cdots
p_B}\biggr)^{P-1}.\end{split}\ee Next, we fix
$(Y_1,\dots,Y_k)\in\mathscr{P}_B$ and
$(Z_1,\dots,Z_k)\in\mathscr{P}_B$ and proceed to the estimation of
the sum
$$\sum_{\substack{p_1,\dots,p_B\\\eqref{s3e5},\eqref{s3e10a}}}\frac1{p_1\cdots
p_B}.$$ Note that \eqref{s3e10a}\;is equivalent to
\be\label{s3e10}-\log 2<\sum_{i\in Y_j\setminus Z_j}\log
p_i-\sum_{i\in Z_j\setminus Y_j}\log p_i<\log 2\;\;\;(1\le j\le
k).\ee Conditions \eqref{s3e10}, $1\le j\le k$, are a system of $k$
inequalities. For every $j\in\{1,\dots,k\}$ and every $I_j\in
Y_j\triangle Z_j$ \eqref{s3e10}\;implies that
$p_{I_j}\in[X_j,4X_j]$, where $X_j$ is a constant depending only on
the primes $p_i$ for $i\in Y_j\triangle Z_j\setminus\{I_j\}$. In
order to exploit this simple observation to its full potential we
need to choose $I_1,\dots,I_k$ as large as possible. After this is
done, we fix the primes $p_i$ for
$i\in\{1,\dots,B\}\setminus\{I_1,\dots,I_k\}$ and estimate the sum
over $p_{I_1},\dots,p_{I_k}$. The obvious choice is to set $I_j=\max
Y_j\triangle Z_j$, $1\le j\le k$. However, in this case the indices
$I_1,\dots,I_k$ and the numbers $X_1,\dots,X_k$ might be
interdependent in a complicated way, which would make the estimation
of the sum over $p_{I_1},\dots,p_{I_k}$ very hard. So it is
important to choose large $I_1,\dots,I_k$ for which at the same time
the dependence of $X_1,\dots,X_k$ is simple enough to allow the
estimation of the sum over $p_{I_1},\dots,p_{I_k}$. What we will do
is to construct large $I_1,\dots,I_k$ such that if we fix the primes
$p_i$ for $i\in\{1,\dots,B\}\setminus\{I_1,\dots,I_k\}$, then
\eqref{s3e10}\;becomes a linear system of inequalities with respect
to $\log p_{I_1},\dots,\log p_{I_k}$ that corresponds to a
triangular matrix and hence is easily solvable (actually, we have to
be slightly more careful, but this is the main idea).

Define $I_1,\dots,I_k$ and $m_1,\dots,m_k$ with
$I_i\in(Y_{m_i}\triangle Z_{m_i})\cup\{0\}$ for all
$i\in\{1,\dots,k\}$ inductively, as follows. Let
$$I_1=\max\left\{\mathcal{U}(Y_1\triangle Z_1,\dots,Y_k\triangle Z_k)\cup\{0\}\right\}.$$
If $I_1=0$, set $m_1=1$. Else, define $m_1$ to be the unique element
of $\{1,\dots,k\}$ so that $I_1\in Y_{m_1}\triangle Z_{m_1}$. Assume
we have defined $I_1,\dots,I_i$ and $m_1,\dots,m_i$ for some
$i\in\{1,\dots,k-1\}$ with $I_r\in (Y_{m_r}\triangle
Z_{m_r})\cup\{0\}$ for $r=1,\dots,i$. Then set
$$I_{i+1}=\max\left\{\mathcal{U}\left(\{Y_j\triangle Z_j:j\in\{1,\dots,k\}\setminus\{m_1,\dots,m_i\}\}\right)\cup\{0\}\right\}.$$
If $I_{i+1}=0$, set
$m_{i+1}=\min\{\{1,\dots,k\}\setminus\{m_1,\dots,m_i\}\}.$
Otherwise, define $m_{i+1}$ to be the unique element of
$\{1,\dots,k\}\setminus\{m_1,\dots,m_i\}$ such that $I_{i+1}\in
Y_{m_{i+1}}\triangle Z_{m_{i+1}}$. This completes the inductive
step. Let $\{1\le j\le k:I_j>0\}=\{j_1,\dots,j_n\}$, where
$j_1<\cdots<j_n$ and put $\mathscr{J}=\{m_{j_r}:1\le r\le n\}$.
Notice that, by construction, we have that
$\{m_1,\dots,m_k\}=\{1,\dots,k\}$ and $I_{j_r}\neq I_{j_s}$ for
$1\le r<s\le n$.

Fix the primes $p_i$ for
$i\in\mathcal{I}=\{1,\dots,B\}\setminus\{I_{j_1},\dots,I_{j_n}\}$.
By the definition of the indices $I_1,\dots,I_k$, for every
$r\in\{1,\dots,n\}$ the prime number $p_{I_{j_r}}$ appears in
\eqref{s3e10}\;for $j=m_{j_r}$, but does not appear in
\eqref{s3e10}\;for $j\in\{m_{j_{r+1}},\dots,m_{j_n}\}$. So
\eqref{s3e10}, $j\in\mathscr{J}$, is a linear system with respect to
$\log p_{I_{j_1}},\dots,\log p_{I_{j_n}}$ corresponding to a
triangular matrix (up to a permutation of its rows) and a
straightforward manipulation of its rows implies that
$p_{I_{j_r}}\in[V_r,4^kV_r]$, $1\le r\le n$, for some numbers $V_r$
that depend only on the primes $p_i$ for $i\in\mathcal{I}$ and the
$k$-tuples $(Y_1,\dots,Y_k)$ and $(Z_1,\dots,Z_k)$, which we have
fixed. Therefore
$$\sum_{\substack{p_{I_{j_1}},\dots,p_{I_{j_n}}\\\eqref{s3e5},\eqref{s3e10a}}}\frac1{p_{I_{j_1}}\cdots
p_{I_{j_n}}}\le\prod_{r=1}^n\sum_{\substack{V_r\le p_{I_{j_r}}\le
4^kV_r\\p_{I_{j_r}}\in D_{E_{\bv
b}(I_{j_r})}}}\frac1{p_{I_{j_r}}}\ll_k\prod_{r=1}^n\frac1{\log(\max\{V_r,\lambda_{E_{\bv
b}(I_{j_r})-1}\})}\ll_k\prod_{r=1}^n\rho^{-E_{\bv b}(I_{j_r})}$$ and
consequently
$$\sum_{\substack{p_1,\dots,p_B\\\eqref{s3e5},\eqref{s3e10a}}}\frac1{p_1\cdots
p_B}\ll_k\prod_{r=1}^n\rho^{-E_{\bv
b}(I_{j_r})}\sum_{\substack{p_i,\;i\in\mathcal{I}\\\eqref{s3e5}}}\prod_{i\in\mathcal{I}}\frac1p_i\le(\log\rho)^{B-n}\prod_{j=1}^k\rho^{-E_{\bv
b}(I_j)},$$ by \eqref{s3e3}. Inserting the above estimate into
\eqref{s3e7}\;we deduce that
\be\label{s3e12b}\sum_{a\in\mathcal{A}(\bv
b)}\frac{W_{k+1}^P(a)}a\ll_k\frac{(\log\rho)^B}{b_1!\cdots
b_H!}\sum_{(Y_1,\dots,Y_k)\in\mathscr{P}_B}
\biggl(\sum_{(Z_1,\dots,Z_k)\in\mathscr{P}_B}\prod_{j=1}^k\rho^{-E_{\bv
b}(I_j)}\biggr)^{P-1}.\ee Next, observe that the definition of
$I_1,\dots,I_k$ implies that
$$(I_j,B]\cap\mathcal{U}\left(\{Y_{m_r}\triangle Z_{m_r}:j\le r\le
k\}\right)=\emptyset\quad(1\le j\le k)$$ or, equivalently,
$$\bigcup_{r=j}^k\left(Z_{m_r}\cap(I_j,B]\right)=\bigcup_{r=j}^k\left(Y_{m_r}\cap(I_j,B]\right)\quad(1\le j\le
k),$$ by Remark \ref{rk1}. Hence for fixed
$(Y_1,\dots,Y_k)\in\mathscr{P}_B$, $I_1,\dots,I_k\in\{0,1,\dots,B\}$
and $\bv m=(m_1,\dots,m_k)$ with $\{m_1,\dots,m_k\}=\{1,\dots,k\}$,
the number of admissible $k$-tuples
$(Z_1,\dots,Z_k)\in\mathscr{P}_B$ is at most $M_B(\bv Y_{\bv m};\bv
I)$, where $\bv Y_{\bv m}=(Y_{m_1},\dots,Y_{m_k})$, which together
with \eqref{s3e12b}\;yields that
$$\sum_{a\in\mathcal{A}(\bv
b)}\frac{W_{k+1}^P(a)}a\ll_k\frac{(\log\rho)^B}{b_1!\cdots
b_H!}\sum_{\bv Y\in\mathscr{P}_B}\left(\sum_{0\le I_1,\dots,I_k\le
B}\sum_{\bv m}M_B(\bv Y_{\bv m};\bv I)\prod_{j=1}^k\rho^{-E_{\bv
b}(I_j)}\right)^{P-1}.$$ So, by the inequality inequality
$(a+b)^{P-1}\le a^{P-1}+b^{P-1}$ for $a\ge0$ and $b\ge 0$, which
holds precisely when $1<P\le 2$, we find that
$$\sum_{a\in\mathcal{A}(\bv
b)}\frac{W_{k+1}^P(a)}a\ll_k\frac{(\log\rho)^B}{b_1!\cdots
b_H!}\sum_{\bv m}\sum_{0\le I_1,\dots,I_k\le
B}\prod_{j=1}^k(\rho^{P-1})^{-E_{\bv b}(I_j)}\sum_{\bv
Y\in\mathscr{P}_B}\left(M_B(\bv Y_{\bv m};\bv I)\right)^{P-1}.$$
Finally, note that $$\sum_{\bv Y\in\mathscr{P}_B}\left(M_B(\bv
Y_{\bv m};\bv I)\right)^{P-1}=\sum_{\bv
Y\in\mathscr{P}_B}\left(M_B(\bv Y;\bv I)\right)^{P-1}$$ for every
$\bv m=(m_1,\dots,m_k)$ with $\{m_1,\dots,m_k\}=\{1,\dots,k\}$,
which completes the proof of the lemma.
\end{proof}

\begin{lemma}\label{s3l6b} Let $P\in(1,+\infty)$ and $0\le I_1,\dots,I_k\le B$ so that
$I_{\sigma(1)}\le\cdots\le I_{\sigma(k)}$ for some permutation
$\sigma\in S_k$. Then $$\sum_{\bv Y\in\mathscr{P}_B}\left(M_B(\bv
Y;\bv
I)\right)^{P-1}\le(k+1)^B\prod_{j=1}^k\left(\frac{j-1+(k-j+2)^P}{j+(k-j+1)^P}\right)^{I_{\sigma(j)}}.$$
\end{lemma}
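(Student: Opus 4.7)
\emph{Proof plan.} The plan is to encode each tuple $\bv Y=(Y_1,\dots,Y_k)\in\mathscr{P}_B$ as a coloring $c_Y\colon\{1,\dots,B\}\to\{0,1,\dots,k\}$, where $c_Y(n)=r\ge 1$ means $n\in Y_r$ and $c_Y(n)=0$ means $n$ lies outside $\bigcup_r Y_r$ (and similarly for $\bv Z$). In this language, the relation $\bigcup_{r=j}^{k}(Z_r\cap(I_j,B])=\bigcup_{r=j}^{k}(Y_r\cap(I_j,B])$ translates to: for every $n\in\{1,\dots,B\}$ and every $j\in J(n):=\{j\in\{1,\dots,k\}:I_j<n\}$, one has $c_Z(n)\ge j\iff c_Y(n)\ge j$. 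These are constraints on $c_Z(n)$ alone and are independent across different $n$, so $M_B(\bv Y;\bv I)=\prod_{n=1}^B N(c_Y(n);J(n))$, where $N(r;J)$ denotes the number of colors $r'\in\{0,\dots,k\}$ consistent with value $r$ under the constraints indexed by $J$. Summing over $\bv Y$ then factorizes as
\[
\sum_{\bv Y\in\mathscr{P}_B}M_B(\bv Y;\bv I)^{P-1}=\prod_{n=1}^B\sum_{r=0}^k N(r;J(n))^{P-1}.
\]

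For the local factor I would observe that, for fixed $J\subseteq\{1,\dots,k\}$ with $|J|=j^\ast$, the conditions $c_Z(n)\ge j\iff c_Y(n)\ge j$ for $j\in J$ partition $\{0,1,\dots,k\}$ into $j^\ast+1$ consecutive blocks cut at the elements of $J$, and $N(r;J)$ equals the size of the block containing $r$. Hence $\sum_{r=0}^k N(r;J)^{P-1}=\sum_i|B_i|^P$, where the block sizes $|B_i|$ are positive integers summing to $k+1$. Since $x\mapsto x^P$ is convex for $P>1$, the maximum of $\sum_i|B_i|^P$ over the simplex $\{s\in\SR^{j^\ast+1}:\sum s_i=k+1,\ s_i\ge1\}$ is attained at an extreme point, namely $j^\ast$ coordinates equal to $1$ and one coordinate equal to $k-j^\ast+1$. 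This yields the clean bound
\[
\sum_{r=0}^k N(r;J)^{P-1}\le j^\ast+(k-j^\ast+1)^P=:A_{j^\ast},
\]
which depends only on $|J|$ and not on $J$ itself.

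To conclude, let $\sigma$ be the permutation with $I_{\sigma(1)}\le\cdots\le I_{\sigma(k)}$, and set $I_{\sigma(0)}=0$ and $I_{\sigma(k+1)}=B$. Then $j^\ast(n):=|J(n)|$ equals $j$ precisely on the $I_{\sigma(j+1)}-I_{\sigma(j)}$ integers $n\in(I_{\sigma(j)},I_{\sigma(j+1)}]$. Therefore
\[
\prod_{n=1}^B A_{j^\ast(n)}=\prod_{j=0}^k A_j^{I_{\sigma(j+1)}-I_{\sigma(j)}}=A_k^B\prod_{j=1}^k\left(\frac{A_{j-1}}{A_j}\right)^{I_{\sigma(j)}},
\]
and since $A_k=k+1$ and $A_{j-1}/A_j=\frac{(j-1)+(k-j+2)^P}{j+(k-j+1)^P}$, this matches the right-hand side of the lemma. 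The main conceptual hurdle is recasting the set-theoretic condition defining $M_B$ in terms of coloring constraints at each coordinate $n$, which is what makes the sum factorize; once that is in hand, the remainder is a one-line convexity argument together with the telescoping identity above. A small subtlety is that $\bv I$ need not be sorted, so $J(n)$ can be an arbitrary subset of $\{1,\dots,k\}$, but the block-partition structure and the bound $A_{j^\ast}$ depend only on $|J(n)|$, so no loss of generality arises.
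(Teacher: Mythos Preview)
Your proof is correct and follows essentially the same strategy as the paper's: factorize $M_B(\bv Y;\bv I)$ over the coordinates $n\in\{1,\dots,B\}$, observe that the sum over $\bv Y$ then factorizes into local factors of the form $\sum_i |B_i|^P$ with block sizes $|B_i|\ge 1$ summing to $k+1$, bound each local factor by convexity, and telescope. The paper carries this out by partitioning $\{1,\dots,B\}$ into the intervals $\mathcal{N}_i=(I_{\sigma(i)},I_{\sigma(i+1)}]$ and introducing auxiliary bijections $\chi_i$ to track the block sizes $v_{i,j+1}=\chi_i(j+1)-\chi_i(j)$; your coloring reformulation (with $J(n)=\{j:I_j<n\}$ and the observation that the constraint ``$c_Z(n)\ge j\iff c_Y(n)\ge j$ for all $j\in J(n)$'' cuts $\{0,\dots,k\}$ into consecutive blocks) is a tidier packaging of exactly the same computation.
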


\begin{proof} First, we calculate $M_B(\bv Y;\bv I)$ for fixed $\bv Y=(Y_1,\dots,Y_k)\in\mathscr{P}_B$.
Set $I_0=0$, $I_{k+1}=B$, $\sigma(0)=0$, $\sigma(k+1)=k+1$ and
$$\mathcal{N}_j=(I_{\sigma(j)},I_{\sigma(j+1)}]\cap\{1,\dots,B\}\quad(0\le
j\le k).$$ In addition, put
$$Y_0=\{1,\dots,B\}\setminus\bigcup_{j=1}^kY_j$$ as well as
\be\label{s3eyij}Y_{i,j}=Y_j\cap\mathcal{N}_i\quad\text{and}\quad
y_{i,j}=|Y_{i,j}|\quad(0\le i\le k,\;0\le j\le k).\ee A $k$-tuple
$(Z_1,\dots,Z_k)\in\mathscr{P}_B$ is counted by $M_B(\bv Y;\bv I)$
if, and only if,
\be\label{s3e13a}\bigcup_{r=j}^k\left(Z_r\cap(I_j,B]\right)
=\bigcup_{r=j}^k\left(Y_r\cap(I_j,B]\right)\quad(1\le j\le k).\ee If
we set
$$Z_0=\{1,\dots,B\}\setminus\bigcup_{j=1}^kZ_j$$ and
$$Z_{i,j}=Z_j\cap\mathcal{N}_i\quad(0\le i\le k,\;0\le j\le k),$$
then \eqref{s3e13a}\;is equivalent to
\be\label{s3e13}\bigcup_{r=\sigma(j)}^kZ_{i,r}=\bigcup_{r=\sigma(j)}^kY_{i,r}\quad(0\le
i\le k,\;0\le j\le i).\ee For every $i\in\{0,1,\dots,k\}$ let
$$\chi_i:\{0,1,\dots,i+1\}\to\{\sigma(0),\sigma(1),\dots,\sigma(i),\sigma(k+1)\}$$
be the bijection uniquely determined by the property that
$\chi_i(0)<\cdots<\chi_i(i+1)$. So the sequence
$\chi_i(0),\dots,\chi_i(i+1)$ is the sequence
$\sigma(0),\sigma(1),\dots,\sigma(i),\sigma(k+1)$ ordered
increasingly. In particular, $\chi_i(0)=\sigma(0)=0$ and
$\chi_i(i+1)=\sigma(k+1)=k+1$. With this notation
\eqref{s3e13}\;becomes
$$\bigcup_{r=\chi_i(j)}^kZ_{i,r}=\bigcup_{r=\chi_i(j)}^kY_{i,r}\quad(0\le i\le k,\;0\le
j\le i),$$ which is equivalent to
$$\bigcup_{r=\chi_i(j)}^{\chi_i(j+1)-1}Z_{i,r}=\bigcup_{r=\chi_i(j)}^{\chi_i(j+1)-1}Y_{i,r}\quad(0\le
i\le k,\;0\le j\le i).$$ For each $i\in\{0,1,\dots,k\}$ let $M_i$
denote the total number of mutually disjoint $(k+1)$-tuples
$(Z_{i,0},Z_{i,1},\dots,Z_{i,k})$ such that
$$\bigcup_{r=\chi_i(j)}^{\chi_i(j+1)-1}Z_{i,r}=\bigcup_{r=\chi_i(j)}^{\chi_i(j+1)-1}Y_{i,r}\quad(0\le
j\le i).$$ Then \be\label{s3e14}M_B(\bv Y;\bv
I)=\prod_{i=0}^kM_i.\ee Moreover, it is immediate from the
definition of $M_i$ that
$$M_i=\prod_{j=0}^i(\chi_i(j+1)-\chi_i(j))^{y_{i,\chi_i(j)}+\cdots+y_{i,\chi_i(j+1)-1}}.$$
Set $v_{i,j+1}=\chi_i(j+1)-\chi_i(j)$ for $j\in\{0,\dots,i\}$. Note
that $v_{i,1}+\cdots+v_{i,i+1}=k+1$ and that $v_{i,j+1}\ge1$ for all
$j\in\{0,\dots,i\}$. Let
\be\label{s3e14b}W_{i,j}=\bigcup_{r=\chi_i(j)}^{\chi_i(j+1)-1}Y_{i,r},\quad
w_{i,j}=|W_{i,j}|\quad(0\le j\le i).\ee With this notation we have
that \be\label{s3e15}M_i=\prod_{j=0}^iv_{i,j+1}^{w_{i,j}}\quad(0\le
i\le k).\ee  Inserting \eqref{s3e15}\;into \eqref{s3e14}\;we deduce
that \be\label{s3e16}M_B(\bv Y;\bv
I)=\prod_{i=0}^k\prod_{j=0}^iv_{i,j+1}^{w_{i,j}}.\ee Therefore
$$S:=\sum_{\bv Y}\left(M_B(\bv Y;\bv
I)\right)^{P-1}=\prod_{i=0}^k\sum_{Y_{i,0},\dots,Y_{i,k}}\prod_{j=0}^i(v_{i,j+1}^{P-1})^{w_{i,j}},$$
where the sets $Y_{i,j}$ are defined by \eqref{s3eyij}. Next, we
calculate $S$. Fix $i\in\{1,\dots,k\}$. Given
$W_{i,0},\dots,W_{i,i}$, a partition of $\mathcal{N}_i$, the number
of $Y_{i,0},\dots,Y_{i,k}$ satisfying \eqref{s3e14b}\;is
$$\prod_{j=0}^i(\chi_i(j+1)-\chi_i(j))^{|W_{i,j}|}=\prod_{j=0}^iv_{i,j+1}^{w_{i,j}}.$$
Hence
$$\sum_{Y_{i,0},\dots,Y_{i,k}}\prod_{j=0}^i(v_{i,j+1}^{P-1})^{w_{i,j}}=\sum_{W_{i,0},\dots,W_{i,i}}\prod_{j=0}^i(v_{i,j+1})^{Pw_{i,j}}
=(v_{i,1}^P+\cdots+v_{i,i+1}^P)^{|\mathcal{N}_i|},$$ by the
multinomial theorem. So
$$S=\prod_{i=0}^k(v_{i,1}^P+\cdots+v_{i,i+1}^P)^{|\mathcal{N}_i|}
=\prod_{i=0}^k(v_{i,1}^P+\cdots+v_{i,i+1}^P)^{I_{\sigma(i+1)}-I_{\sigma(i)}}.$$
Finally, recall that $v_{i,1}+\cdots+v_{i,i+1}=k+1$ as well as
$v_{i,j+1}\ge1$ for all $0\le j\le i\le k$, and note that
$$\max\biggl\{\sum_{j=1}^{i+1}x_j^P:\sum_{j=1}^{i+1}x_j=k+1,x_j\ge
1\;(1\le j\le i+1)\biggr\}=i+(k+1-i)^P,$$ since the maximum of a
convex function in a simplex occurs at its vertices. Hence we
conclude that
$$S\le\prod_{i=0}^k\left(i+(k+1-i)^P\right)^{I_{\sigma(i+1)}-I_{\sigma(i)}}
=(k+1)^B\prod_{i=1}^k\left(\frac{i-1+(k-i+2)^P}{i+(k-i+1)^P}\right)^{I_{\sigma(i)}},$$
which completes the proof of the lemma.
\end{proof}

Set
$$P=\min\left\{2,\frac{(k+1)^2(\log\rho)^2}{(k+1)^2(\log\rho)^2-1}\right\}.$$
Since $(k+1)\log\rho=\frac{k+1}k\log(k+1)>1$, we have that $1<P\le
2$ and thus Lemmas \ref{s3l6a}\;and \ref{s3l6b}\;can be applied.
Moreover, for our choice of $P$ the following crucial inequality
holds.

\begin{lemma}\label{s3l5} Let $k\ge1$ and $P$ defined as above. Then
$$\frac{i-1+(k-i+2)^P}{k+1}<(\rho^{P-1})^{k-i+1}\quad(2\le i\le k).$$
\end{lemma}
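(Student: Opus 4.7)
\emph{The plan} is to substitute $m := k - i + 1 \in \{1, \dots, k-1\}$, rewriting the inequality equivalently as $(k+1)\rho^{(P-1)m} > (m+1)^P + (k - m)$. I then introduce the continuous extension
$$q(x) := (k+1)^{1 + (P-1)x/k} - (x+1)^P - (k - x), \qquad x \in [0, k],$$
and observe that $q(0) = q(k) = 0$ by direct substitution (using $\rho^k = k+1$). The strategy is to show that $q$ is \emph{strictly concave} on $[0,k]$; combined with the matching zero boundary values, strict concavity forces $q(x) > 0$ throughout the open interior $(0, k)$, and in particular $q(m) > 0$ at each integer $m \in \{1, \dots, k-1\}$, which is exactly the desired inequality.

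The lemma is vacuous for $k = 1$, so I assume $k \ge 2$. Setting $\beta := (k+1)\log\rho = (k+1)\log(k+1)/k$, one checks that $\beta$ is increasing in $k$ and already exceeds $\sqrt{2}$ at $k=2$, so the minimum in the definition of $P$ is attained by the rational expression. This yields the key algebraic identity $(P-1)\beta^2 = P$, equivalently
$$(P-1)(\log\rho)^2 = \frac{P}{(k+1)^2},$$
and also $1 < P < 2$. Computing $q''$ directly and applying this identity, the two terms collapse to a common scale:
$$q''(x) = (k+1)^{1 + (P-1)x/k}(P-1)^2(\log\rho)^2 - P(P-1)(x+1)^{P-2} = P(P-1)\Bigl[(k+1)^{(P-1)x/k - 1} - (x+1)^{P-2}\Bigr].$$

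Taking logarithms (all quantities being positive), the condition $q''(x) < 0$ becomes positivity of
$$D(x) := \bigl(k - (P-1)x\bigr)\log\rho - (2-P)\log(x+1).$$
Direct evaluation gives $D(0) = k\log\rho > 0$ and $D(k) = (2-P)(k\log\rho - \log(k+1)) = 0$, while
$$D'(x) = -(P-1)\log\rho - \frac{2-P}{x+1} < 0 \quad \text{on } [0,k]$$
since $1 < P < 2$. Thus $D$ strictly decreases from $k\log\rho$ to $0$, giving $D(x) > 0$ on $[0, k)$, hence $q''(x) < 0$ on $[0,k)$, and the strict concavity of $q$ follows. The main obstacle I anticipate is simply recognizing that $P$ has been chosen precisely so that $(P-1)\beta^2 = P$: without this identity, the exponential factor $(k+1)^{1+(P-1)x/k}(\log\rho)^2$ and the polynomial factor $(x+1)^{P-2}$ in $q''$ would be on incomparable scales. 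Once this algebraic coincidence is exploited, the entire argument reduces to a one-variable monotonicity check for $D$.
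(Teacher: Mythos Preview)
Your proof is correct and follows essentially the same approach as the paper: define the same auxiliary function (your $q$ is the paper's $f$), observe $q(0)=q(k)=0$, and deduce positivity on $(0,k)$ from strict concavity. The only difference is in how concavity is checked: the paper notes that $q'''>0$ (since $P\le 2$) so $q''$ is increasing, and then verifies $q''(k)\le 0$ from the \emph{inequality} $(P-1)\beta^2\le P$ built into the definition of $P$; you instead invoke the \emph{equality} $(P-1)\beta^2=P$ (valid for $k\ge 2$) to simplify $q''$ explicitly and reduce to the monotone function $D$. Both routes are short and equivalent here; the paper's version has the mild advantage of not needing to split off $k=1$, while yours makes the role of the choice of $P$ more transparent.
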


\begin{proof} Set
$$f(x)=(k+1)(\rho^{P-1})^x+x-(x+1)^P-k,\;\;\;x\in[0,k].$$ It
suffices to show that $f(x)>0$ for $1\le x\le k-1$. Observe that
$f(0)=f(k)=0$. Moreover, since $1<P\le 2$,
$f^{\prime\prime\prime}(x)>0$ for all $x$. Hence $f^{\prime\prime}$
is strictly increasing. Note that
$$f^{\prime\prime}(k)=(P-1)^2(\log\rho)^2(k+1)^P-P(P-1)(k+1)^{P-2}\le0,$$
by our choice of $P$. Hence $f^{\prime\prime}(x)<0$ for $x\in(0,k)$,
that is $f$ is a concave function and thus it is positive for
$x\in(0,k)$.
\end{proof}

Let $\mathcal{B}$ be the set of vectors $(b_1,\dots,b_H)$ such that
$B_i\le i$ for all $i\in\{1,\dots,H\}$. Moreover, set
$$\lambda=\frac{(k+1)^P}{k^P+1}>1.$$

\begin{lemma}\label{s3l7} Let $k\ge1$ and $\bv b=(b_1,\dots,b_H)\in\mathcal{B}$. Then
$$\sum_{a\in\mathcal{A}(\bv b)}\frac{W_{k+1}^P(a)}a\ll_k\frac{((k+1)\log\rho)^B}{b_1!\cdots
b_H!}\left(1+\sum_{m=1}^H\lambda^{B_m-m}\right).$$
\end{lemma}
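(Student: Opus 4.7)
The plan is to combine Lemmas \ref{s3l6a} and \ref{s3l6b} and then control the resulting sum over $\bv I$ using Lemma \ref{s3l5} together with a careful stratification by the ``bands'' $D_m$.

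Applying Lemma \ref{s3l6a} and then bounding $\sum_{\bv Y\in\mathscr{P}_B}(M_B(\bv Y;\bv I))^{P-1}$ by Lemma \ref{s3l6b}, I obtain
$$
\sum_{a\in\mathcal{A}(\bv b)}\frac{W_{k+1}^P(a)}{a} \ll_k \frac{((k+1)\log\rho)^B}{b_1!\cdots b_H!}\cdot S(\bv b),
$$
where
$$
S(\bv b) = \sum_{0\le I_1,\dots,I_k\le B}\prod_{j=1}^k(\rho^{P-1})^{-E_{\bv b}(I_j)}\prod_{j=1}^k \beta_j^{I_{\sigma(j)}}, \qquad \beta_j = \frac{j-1+(k-j+2)^P}{j+(k-j+1)^P},
$$
and $\sigma\in S_k$ is the permutation sorting the $I_j$'s in non-decreasing order. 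It thus suffices to prove
$$
S(\bv b)\ll_k 1 + \sum_{m=1}^H\lambda^{B_m - m}.
$$

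Since $\prod_j(\rho^{P-1})^{-E_{\bv b}(I_j)}$ is symmetric in $\bv I$, I restrict to sorted tuples $I_1\le\cdots\le I_k$ at the cost of a factor $k!$. Setting $a_l = l+(k-l+1)^P$ so that $\beta_j = a_{j-1}/a_j$ and $\lambda = a_0/a_1$, Abel summation (with $I_0=0$) gives
$$
\prod_{j=1}^k\beta_j^{I_j} = \prod_{l=1}^k\left(\frac{a_{l-1}}{k+1}\right)^{I_l - I_{l-1}}.
$$
I then use the algebraic identity $a_0/(k+1) = \lambda\cdot a_1/(k+1)$ to peel off the factor $\lambda^{I_1}$, and I apply Lemma \ref{s3l5} to bound $a_{l-1}/(k+1)\le(\rho^{P-1})^{k-l+1}$ for each $l\ge 2$. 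A telescoping computation of the resulting exponent of $\rho^{P-1}$ gives
$$
\prod_{j=1}^k\beta_j^{I_j} \le \lambda^{I_1}(\rho^{P-1})^{I_2 + \cdots + I_k}.
$$

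Finally, I stratify by the bands $m_j := E_{\bv b}(I_j)$, which satisfy $m_1\le\cdots\le m_k$. The assumption $\bv b\in\mathcal{B}$ ensures $I_l - E_{\bv b}(I_l)\le B_{m_l} - m_l\le 0$ for each $l$, so $(\rho^{P-1})^{I_l - E_{\bv b}(I_l)}\le 1$ for $l\ge 2$. A geometric summation over $I_1\in(B_{m-1},B_m]$ produces a main term proportional to $\lambda^{B_m - m}$, which after summing over $m$ yields the claimed bound.

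\paragraph{Main obstacle.} The technical hurdle is the inequality $\lambda>\rho^{P-1}$: a naive geometric summation over $I_1$ of $\lambda^{I_1}(\rho^{P-1})^{-m}$ produces an extraneous factor $(\lambda/\rho^{P-1})^m$, which is unbounded as $m$ grows and would ruin the target estimate. Avoiding this requires exploiting the strict inequality in Lemma \ref{s3l5} for $l\ge 2$ and the sign $I_l - E_{\bv b}(I_l)\le 0$ on the ``off-diagonal'' contributions (where the $I_l$'s lie in higher bands than $I_1$) in order to generate enough geometric decay in $m$ to cancel this mismatch, and separately handling the ``diagonal'' configuration $I_1 = \cdots = I_k$ in a single band, which already gives the correct $\lambda^{B_m-m}$ main term.
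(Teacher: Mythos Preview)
Your overall strategy matches the paper's: combine Lemmas \ref{s3l6a} and \ref{s3l6b}, reduce to ordered $I_1\le\cdots\le I_k$, stratify by bands $m_l=E_{\bv b}(I_l)$, and use Lemma \ref{s3l5} to sum. However, there is a genuine gap in the execution.

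The displayed inequality at the end of your telescoping step,
\[
\prod_{j=1}^k\beta_j^{I_j}\le\lambda^{I_1}(\rho^{P-1})^{I_2+\cdots+I_k},
\]
is correct but too lossy for what follows. Combining it with $\prod_l(\rho^{P-1})^{-E_{\bv b}(I_l)}$ and your observation $(\rho^{P-1})^{I_l-E_{\bv b}(I_l)}\le1$ for $l\ge2$ leaves, for each fixed $I_1$, a sum over $I_2,\dots,I_k$ in which every term is bounded by $1$ but there are $\asymp B^{k-1}$ of them. In the extremal case $b_j=1$ for all $j$ (so $B_m=m$ and $E_{\bv b}(I)=I$), each factor $(\rho^{P-1})^{I_l-E_{\bv b}(I_l)}$ equals $1$ identically, and the sum over $I_2,\dots,I_k$ contributes a genuine factor $\asymp B^{k-1}$, not $O_k(1)$. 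Your ``Main obstacle'' paragraph senses a problem but locates it only in the $I_1$-sum; the more basic failure is that the $I_2,\dots,I_k$-sum cannot be controlled once you have replaced the \emph{strict} inequality of Lemma \ref{s3l5} by equality.

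The paper avoids this by reversing the order of operations: it first sums each $I_i$ within its band to obtain $\prod_i(t_i/t_{i+1})^{B_{j_i}}$ with $t_i=(i-1+(k-i+2)^P)/(k+1)$, then invokes $B_{j_i}\le j_i$ for $i\ge2$, and only \emph{after} the substitution $r_i=j_i-j_{i-1}$ applies Lemma \ref{s3l5} in its strict form, producing $\prod_{i=2}^k\bigl(t_i/(\rho^{P-1})^{k-i+1}\bigr)^{r_i}$ with bases strictly less than $1$; the sums over $r_2,\dots,r_k\ge0$ are then genuinely convergent geometric series. The $\lambda^{B_m-m}$ factor emerges cleanly from the $i=1$ piece via $t_1/t_2=\lambda$ and $t_2/\rho^{(P-1)k}=1/\lambda$, with no $(\lambda/\rho^{P-1})^m$ loss. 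Your diagonal/off-diagonal dichotomy is unnecessary; what is needed is simply to postpone the use of Lemma \ref{s3l5} until after the band stratification, so that the strict margin survives to drive the geometric convergence over the higher indices.
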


\begin{proof} Set $$t_i=\frac{i-1+(k-i+2)^P}{k+1}\quad(1\le i\le k+1).$$
Then Lemmas \ref{s3l6a}\;and \ref{s3l6b}\;imply that
\be\label{s3e17}\begin{split}\sum_{a\in\mathcal{A}(\bv
b)}\frac{W_{k+1}^P(a)}a&\ll_k\frac{((k+1)\log\rho)^B}{b_1!\cdots
b_H!}\sum_{0\le I_1\le\cdots\le I_k\le
B}\prod_{j=1}^k(\rho^{P-1})^{-E_{\bv b}(I_j)}\left(\frac{t_j}{t_{j+1}}\right)^{I_j}\\
&\ll_k\frac{((k+1)\log\rho)^B}{b_1!\cdots b_H!}\sum_{0=j_0\le
j_1\le\cdots\le j_k\le
H}(\rho^{P-1})^{-(j_1+\cdots+j_k)}\\
&\qquad\qquad\qquad\qquad\qquad\quad\qquad\times\prod_{i=1}^k\sum_{B_{j_{i-1}}\le
I_i\le
B_{j_i}}\left(\frac{t_i}{t_{i+1}}\right)^{I_i}\\
&\ll_k\frac{((k+1)\log\rho)^B}{b_1!\cdots b_H!}\sum_{0\le
j_1\le\cdots\le j_k\le
H}\prod_{i=1}^k(\rho^{P-1})^{-j_i}\left(\frac{t_i}{t_{i+1}}\right)^{B_{j_i}},\end{split}\ee
since $t_1>\dots>t_k>t_{k+1}=1$. Moreover,
$$\prod_{i=1}^k\left(\frac{t_i}{t_{i+1}}\right)^{B_{j_i}}
\le\left(\frac{t_1}{t_2}\right)^{B_{j_1}}\prod_{i=2}^k\left(\frac{t_i}{t_{i+1}}\right)^{j_i}
=\left(\frac{t_1}{t_2}\right)^{B_{j_1}}t_2^{j_1}\prod_{i=2}^kt_i^{j_i-j_{i-1}},$$
since $\bv b\in\mathcal{B}$. Thus, by setting $r_1=j_1$ and
$r_i=j_i-j_{i-1}$ for $i=2,\dots,k$, we deduce that
\be\begin{split}\prod_{i=1}^k(\rho^{P-1})^{-j_i}\left(\frac{t_i}{t_{i+1}}\right)^{B_{j_i}}&\le
(\rho^{P-1})^{-(j_1+\cdots+j_k)}\left(\frac{t_1}{t_2}\right)^{B_{j_1}}t_2^{j_1}\prod_{i=2}^kt_i^{j_i-j_{i-1}}\\
&=\left(\frac{t_1}{t_2}\right)^{B_{r_1}}\left(\frac{t_2}{\rho^{(P-1)k}}\right)^{r_1}
\prod_{i=2}^k\left(\frac{t_i}{(\rho^{P-1})^{k-i+1}}\right)^{r_i}\\
&=\lambda^{B_{r_1}-r_1}\prod_{i=2}^k\left(\frac{t_i}{(\rho^{P-1})^{k-i+1}}\right)^{r_i},\nonumber\end{split}\ee
since $\rho^{(P-1)k}=t_1$ and $t_1/t_2=\lambda$. Consequently,
\be\label{s3e17a}\begin{split}\sum_{0\le j_1\le\cdots\le j_k\le
H}\prod_{i=1}^k(\rho^{P-1})^{-j_i}\left(\frac{t_i}{t_{i+1}}\right)^{B_{j_i}}&\le\sum_{\substack{r_1+\cdots+r_k\le
H\\r_i\ge 0\;(1\le i\le
k)}}\lambda^{B_{r_1}-r_1}\prod_{i=2}^k\left(\frac{t_i}{(\rho^{P-1})^{k-i+1}}\right)^{r_i}\\
&\le\sum_{\substack{0\le r_i\le
H\\1\le i\le k}}\lambda^{B_{r_1}-r_1}\prod_{i=2}^k\left(\frac{t_i}{(\rho^{P-1})^{k-i+1}}\right)^{r_i}\\
&\ll_k\sum_{r_1=0}^H\lambda^{B_{r_1}-r_1},\end{split}\ee since
$t_i<(\rho^{P-1})^{k-i+1}$ for $i=2,\dots,k$ by Lemma \ref{s3l5}.
Inserting \eqref{s3e17a}\;into \eqref{s3e17}\;completes the proof of
the lemma.
\end{proof}

We will now use Lemmas \ref{s3l2}, \ref{s3l4}\;and \ref{s3l7}\;to
bound $\widetilde{H}^{(k+1)}(x,\bv y,2\bv y)$ from below. Recall,
from the beginning of this section, that we have assumed that
$y_1>C_1$ for a sufficiently large constant $C_1$.

\begin{lemma}\label{s3l8} Let $k\ge 1$, $0<\delta<1$ and $c\ge 1$. Consider $x\ge 1$
and $3\le y_1\le y_2\le\cdots\le y_k\le y_1^c$ with
$2^{k+1}y_1\cdots y_k\le x/y_1^\delta$. For a positive integer
$N=N(k)$ set $$H=\left\lfloor\frac{\log\log
y_1}{\log\rho}-L_k\right\rfloor\quad\text{and}\quad B=H-N+1.$$ If $N$
is large enough, then $$H^{(k+1)}(x,\bv y,2\bv y)
\gg_{k,\delta,c}\frac{x}{(\log y_1)^{k+1}}(B(k+1)\log\rho)^B{\rm
Vol}(Y_B(N)),$$ where $Y_B(N)$ is the set of
$\bv\xi=(\xi_1,\dots,\xi_B)\in\SR^B$ satisfying \medskip
\begin{enumerate}\item $0\le\xi_1\le\cdots\le\xi_B\le 1$;
\medskip
\item $\xi_{i+1}\ge i/B$ $(1\le i\le B-1)$;
\medskip
\item $\sum_{j=1}^B\lambda^{j-B\xi_j}\le\lambda^N.$
\end{enumerate}
\end{lemma}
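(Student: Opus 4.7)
The plan is to apply Lemma \ref{s3l2} and then invert Lemma \ref{s3l4} class-by-class over the families $\mathcal{A}(\bv b)$, using Lemma \ref{s3l7} to estimate the $W_{k+1}^P$-moment and an elementary computation for the first moment. By the choice of $H$ together with Lemma \ref{s3l3}, every prime dividing any $a\in\mathcal{A}(\bv b)$ with $\bv b\in\mathcal{B}$ is at most $\lambda_H\le y_1$, so such $a$ contribute to the sum in Lemma \ref{s3l2}; moreover $\phi(a)/a\asymp_k 1$ because $\bv b\in\mathcal{B}$ forces $b_j\le j$ and $\sum_j j/\lambda_{j-1}<\infty$.

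Solving Lemma \ref{s3l4} for the $L^{(k+1)}$-sum yields
\begin{equation*}
\sum_{a\in\mathcal{A}(\bv b)}\frac{L^{(k+1)}(a)}{a}\gg_k \frac{\bigl(\sum_{a\in\mathcal{A}(\bv b)}\tau_{k+1}(a)/a\bigr)^{P/(P-1)}}{\bigl(\sum_{a\in\mathcal{A}(\bv b)}W_{k+1}^P(a)/a\bigr)^{1/(P-1)}}.
\end{equation*}
Every $a\in\mathcal{A}(\bv b)$ satisfies $\tau_{k+1}(a)=(k+1)^B$; grouping $a$ by its prime factorization and using $\sum_{p\in D_j}1/p=\log\rho-O(\rho^{-j})$ (sharp, by the construction of $\lambda_j$) gives
\begin{equation*}
\sum_{a\in\mathcal{A}(\bv b)}\frac{1}{a}\gg_k\frac{(\log\rho)^B}{b_1!\cdots b_H!}.
\end{equation*}
Combining this with the upper bound for the $W_{k+1}^P$-moment from Lemma \ref{s3l7}, the exponents $P/(P-1)$ and $1/(P-1)$ collapse to give
\begin{equation*}
\sum_{a\in\mathcal{A}(\bv b)}\frac{L^{(k+1)}(a)}{a}\gg_k \frac{((k+1)\log\rho)^B}{b_1!\cdots b_H!\bigl(1+\sum_{m=1}^H\lambda^{B_m-m}\bigr)^{1/(P-1)}}.
\end{equation*}

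Summing over $\bv b\in\mathcal{B}$ with $|\bv b|=B$ and inserting into Lemma \ref{s3l2}, the lemma reduces to the combinatorial estimate
\begin{equation*}
\sum_{\substack{\bv b\in\mathcal{B}\\ |\bv b|=B}}\frac{1}{b_1!\cdots b_H!\bigl(1+\sum_m\lambda^{B_m-m}\bigr)^{1/(P-1)}}\gg_{k,N}B^B\mathrm{Vol}(Y_B(N)).
\end{equation*}
I would prove this by retaining only $\bv b$ with $b_i\in\{0,1\}$, which are in bijection with tuples $(i_1<\cdots<i_B)$ in $\{1,\dots,H\}$ satisfying $i_j\ge j$. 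Under the change of variables $\xi_j=(i_j-1)/B$, such tuples correspond to lattice points in $B^{-1}\mathbb{Z}^B$ lying in a region matching $Y_B(N)$: the inequalities $i_j\ge j$ become $\xi_j\ge(j-1)/B$ (condition (2)), while a telescoping computation shows that $\sum_m\lambda^{B_m-m}$ and $\sum_j\lambda^{j-B\xi_j}$ differ only by a multiplicative constant, so the cutoff $\sum_m\lambda^{B_m-m}\le\lambda^N$ captures condition (3). For each such lattice point the denominator is $O_{k,N}(1)$ and $b_i!\equiv 1$, so a standard Riemann-sum argument with mesh $1/B$ yields the required volume lower bound.

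\textbf{Main obstacle.} The delicate step is the passage from the discrete sum to the continuous volume. One must verify the density of admissible lattice points inside $Y_B(N)$, control the multiplicative discrepancy between $\sum_m\lambda^{B_m-m}$ (indexed over bins) and $\sum_j\lambda^{j-B\xi_j}$ (indexed over primes), and handle boundary contributions near $\xi_B=1$ where the identification with the bin indices becomes unclean. All of these cost only multiplicative constants depending on $k$ and $N$, which are absorbed into the implied constant provided $N=N(k)$ is chosen sufficiently large at the outset.
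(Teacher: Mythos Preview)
Your reduction through Lemmas~\ref{s3l2}, \ref{s3l4} and~\ref{s3l7} matches the paper and is fine; the intermediate inequality
\[
\sum_{a\in\mathcal{A}(\bv b)}\frac{L^{(k+1)}(a)}a
	\gg_k\frac{((k+1)\log\rho)^B}{b_1!\cdots b_H!\bigl(1+\sum_{m=1}^H\lambda^{B_m-m}\bigr)^{1/(P-1)}}
\]
is exactly what the paper obtains (for $\bv b$ in the restricted class $\mathcal{B}^*$).

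The gap is in the final combinatorial step. Restricting to $0$--$1$ vectors $\bv b$ with $|\bv b|=B$ loses far too much: there are at most $\binom{H}{B}=\binom{B+N-1}{N-1}=O_N(B^{N-1})$ such vectors, each contributing at most $1$ to the sum (since $b_i!\equiv 1$ and the denominator is $\ge 1$). But the target $B^B\,\mathrm{Vol}(Y_B(N))$ is of order $e^B/B^{3/2}$ (this is the content of Lemma~\ref{s3l9}, and is what is ultimately used), so a polynomial number of terms cannot reach it. The underlying issue is that a ``Riemann sum with mesh $1/B$'' in $B$ dimensions does not approximate volumes: the simplex $\{0\le\xi_1\le\cdots\le\xi_B\le 1\}$ has volume $1/B!$, yet contains only $\binom{B+1}{B}=B+1$ strictly increasing lattice points in $(1/B)\SZ^B\cap[0,1]^B$, so the count times $(1/B)^B$ is nowhere near the volume. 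Your $0$--$1$ vectors correspond precisely to such strictly increasing lattice points.

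What the paper does instead is keep \emph{all} non-negative $\bv b$ (after shifting indices so that $b_i=0$ for $i<N$, i.e.\ setting $g_i=b_{N-1+i}$), and use the \emph{exact} identity
\[
\sum_{\bv g}\frac{1}{g_1!\cdots g_B!}
	=\mathrm{Vol}\Bigl(\bigcup_{\bv g}R(\bv g)\Bigr),
\qquad
R(\bv g)=\bigl\{0\le x_1\le\cdots\le x_B\le B:\ |\{j:x_j\in[i-1,i)\}|=g_i\bigr\},
\]
valid because the $R(\bv g)$ tile the simplex and $\mathrm{Vol}(R(\bv g))=1/(g_1!\cdots g_B!)$. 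One then checks that for $\bv\xi\in Y_B(N)$ the vector $\bv g$ determined by $x_j=B\xi_j$ satisfies the required constraints, so the union contains $B\cdot Y_B(N)$ and its volume is $\ge B^B\,\mathrm{Vol}(Y_B(N))$. The factor $B^B$ thus comes from the rescaling $x=B\xi$, not from counting lattice points, and it is essential to sum over vectors with arbitrary $g_i\in\SZ_{\ge0}$ so that the factorials in the denominator reproduce the local volumes exactly.
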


\begin{proof} Let $\mathcal{B}^*$ be the set of vectors $(b_1,\dots,b_H)\in(\SN\cup\{0\})^H$ such that $b_i=0$ for $i<N$,
\be\label{s3e20a}B_i\le i-N+1\quad(N\le i\le H)\ee and
\be\label{s3e20b}\sum_{m=N}^H\lambda^{B_m-m}\le\frac{\lambda+\lambda^{-N}}{1-1/\lambda}.\ee
Lemma \ref{s3l3}\;and the definition of $H$ imply that
$\log\lambda_H\le\rho^{H+L_k}\le\log y_1.$ Hence
\be\label{s3e17b}\bigcup_{\bv b\in\mathcal{B}^*}\mathcal{A}(\bv
b)\subset\{a\in\SN:P^+(a)\le y_1,\;\mu^2(a)=1\}.\ee Fix for the
moment $\bv b\in\mathcal{B}^*\subset\mathcal{B}$. By Lemma
\ref{s3l7}\;and relation \eqref{s3e20b}\;we have that
\be\label{s3e22}\sum_{a\in\mathcal{A}(\bv
b)}\frac{W_{k+1}^P(a)}a\ll_k\frac{((k+1)\log\rho)^B}{b_N!\cdots
b_H!}\biggl(1+\sum_{m=N}^H\lambda^{B_m-m}\biggr)\ll_k\frac{((k+1)\log\rho)^B}{b_N!\cdots
b_H!}.\ee Also, if $N$ is large enough, then Lemma \ref{s3l3}\;and
relation \eqref{s3e20a}\;imply that
\be\label{s3e21}\begin{split}\sum_{a\in\mathcal{A}(\bv
b)}\frac{\tau_{k+1}(a)}a&=(k+1)^B\prod_{j=N}^H\frac1{b_j!}\biggl(\sum_{p_1\in
D_j}\frac1{p_1}\sum_{\substack{p_2\in D_j\\p_2\neq
p_1}}\frac1{p_2}\cdots\sum_{\substack{p_{b_j}\in
D_j\\p_{b_j}\notin\{p_1,...,p_{b_j-1}\}}}\frac1{p_{b_j}}\biggl)\\
&\ge\frac{(k+1)^B}{b_N!\cdots b_H!}\prod_{j=N}^H\biggl(\log\rho-\frac{b_j}{\lambda_{j-1}}\biggr)^{b_j}\\
&\ge\frac{((k+1)\log\rho)^B}{b_N!\cdots b_H!}\prod_{j=N}^H\biggl(1-\frac{j-N+1}{(\log\rho)\exp\{\rho^{j-L_k-1}\}}\biggr)^{j-N+1}\\
&\ge\frac12\frac{((k+1)\log\rho)^B}{b_N!\cdots b_H!}\end{split}\ee
as well as
\be\label{s3e21b}\frac{\phi(a)}a\ge\prod_{j=N}^H\left(1-\frac1{\lambda_{j-1}}\right)^{b_j}
\ge\prod_{j=N}^H\left(1-\frac1{\exp\{\rho^{j-L_k-1}\}}\right)^{j-N+1}
\ge\frac12\ee for $a\in\mathcal{A}(\bv b)$. Combining Lemma
\ref{s3l4}\;with relations \eqref{s3e22}\;and \eqref{s3e21}\;we
deduce that $$\sum_{a\in\mathcal{A}(\bv
b)}\frac{L^{(k+1)}(a)}a\gg_k\frac{((k+1)\log\rho)^B}{b_N!\cdots
b_H!}.$$ The above relation together with \eqref{s3e17b},
\eqref{s3e21b}\;and Lemma \ref{s3l2}\;yields that
$$\widetilde{H}^{(k+1)}(x,\bv y,2\bv
y)\gg_{k,\delta,c}x\frac{((k+1)\log\rho)^B}{(\log
y_1)^{k+1}}\sum_{\bv b\in\mathcal{B}^*}\frac{1}{b_N!\cdots b_H!}.$$
For $i\in\{1,\dots,B\}$ set $g_i=b_{N-1+i}$ and let
$G_i=g_1+\cdots+g_i$. Then \be\label{s3e24}G_i=B_{i+N-1}\le
i\quad(1\le i\le B)\ee and
\be\label{s3e26}\sum_{i=1}^B\lambda^{G_i-i}=\lambda^{N-1}\sum_{m=N}^H\lambda^{B_m-m}\le\frac{\lambda^N+1/\lambda}{1-1/\lambda},\ee
by \eqref{s3e20a}\;and \eqref{s3e20b}, respectively. With this
notation we have that \be\label{s3e27}\widetilde{H}^{(k+1)}(x,\bv
y,2\bv y)\gg_{k,\delta,c}x\frac{((k+1)\log\rho)^B}{(\log
y_1)^{k+1}}\sum_{\bv g\in\mathcal{G}}\frac{1}{g_1!\cdots g_B!},\ee
where $\mathcal{G}$ is the set of vectors $\bv g=(g_1,\dots,g_B)$ of
non-negative integers $g_1,\dots,g_B$ with $g_1+\cdots+g_B=B$ and
such that \eqref{s3e24} and \eqref{s3e26}\;hold. For such a $\bv g$
let $R(\bv g)$ be the set of $\bv x\in\SR^B$ such that $0\le
x_1\le\cdots\le x_B\le B$ and exactly $g_i$ of the $x_j$'s lie in
$[i-1,i)$ for each $i$. Then \be\label{s3e28}\sum_{\bv
g\in\mathcal{G}}\frac{1}{g_1!\cdots g_B!}=\sum_{\bv
g\in\mathcal{G}}\vol(R(\bv g))=\vol(\cup_{\bv g\in\mathcal{G}}R(\bv
g)).\ee We claim that \be\label{s3e29}\vol(\cup_{\bv
g\in\mathcal{G}}R(\bv g))\ge B^B\vol(Y_B(N)).\ee Take $\bv\xi\in
Y_B(N)$ with $\xi_B<1$ and set $x_j=B\xi_j$. Let $g_i$ be the number
of $x_j$'s lying in $[i-1,i)$. It suffices to show that $\bv
g=(g_1,\dots,g_B)\in\mathcal{G}$. Condition (2) in the definition of
$Y_B(N)$ implies that
$$x_{i+1}\ge i\quad(1\le i\le B-1),$$ which yields
\eqref{s3e24}. Finally, condition (3) in the definition of $Y_B(N)$
gives us that
\be\begin{split}\frac{\lambda^N}{1-1/\lambda}&\ge\frac1{1-1/\lambda}\sum_{j=1}^B\lambda^{j-x_j}
\ge\frac1{1-1/\lambda}\sum_{i=1}^B\lambda^{-i}\sum_{j:x_j\in[i-1,i)}\lambda^j\ge\sum_{i=1}^B\sum_{m=i}^B\lambda^{-m}\sum_{j:x_j\in[i-1,i)}\lambda^j\\
&=\sum_{m=1}^B\lambda^{-m}\sum_{j:x_j<m}\lambda^j\ge\sum_{\substack{1\le
m\le
B\\G_m>0}}\lambda^{-m+G_m}\ge-\frac1{\lambda-1}+\sum_{m=1}^B\lambda^{-m+G_m},\nonumber\end{split}\ee
that is \eqref{s3e26}\;holds. To conclude, we have showed that $\bv
g\in\mathcal{G}$, which proves that inequality \eqref{s3e29}\;does
hold. This fact along with \eqref{s3e27}\;and
\eqref{s3e28}\;completes the proof of the lemma.
\end{proof}

Next, we give a lower bound to the volume of $Y_B(N)$.

\begin{lemma}\label{s3l9} Suppose that $N$ is large enough. Then
$$\vol(Y_B(N))\gg\frac1{(B+1)!}.$$
\end{lemma}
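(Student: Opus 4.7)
The plan is to apply Markov's inequality to reduce the lemma to two cleaner estimates. Let
\[
W_B := \left\{\bv\xi \in [0,1]^B : 0 \le \xi_1 \le \cdots \le \xi_B \le 1,\ \xi_j \ge \tfrac{j-1}{B}\ \forall j\right\}
\]
be the region cut out by conditions (1) and (2) alone, so that $Y_B(N) = W_B \cap \{\sum_j \lambda^{j-B\xi_j} \le \lambda^N\}$. By Markov,
\[
\vol(Y_B(N)) \ge \vol(W_B) - \lambda^{-N}\int_{W_B}\sum_{j=1}^B \lambda^{j-B\xi_j}\,d\bv\xi.
\]
It therefore suffices to prove (i) $\vol(W_B) \ge 2/(B+1)!$ and (ii) $\int_{W_B}\sum_j \lambda^{j-B\xi_j}\,d\bv\xi \le C_\lambda\,\vol(W_B)$ for some constant $C_\lambda$ depending only on $\lambda$ (hence only on $k$); taking $N$ with $\lambda^N \ge 4C_\lambda$ then absorbs at most half of $\vol(W_B)$ in the error term and yields $\vol(Y_B(N)) \ge \tfrac12 \vol(W_B) \ge 1/(B+1)!$.

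For (i), I would parametrise $\bv\xi$ as the order statistics of $B$ i.i.d.\ uniform $[0,1]$ random variables $U_1,\ldots,U_B$, so that $B!\vol(W_B) = \Pr(U_{(k)} \ge (k-1)/B \text{ for all } k)$. By a classical ballot/cycle lemma argument (or by a direct induction that slices $W_B$ at $\xi_B$ and integrates out using the rescaling $\xi_k = t\xi'_k$ in the lower block), this probability equals $(B+1)^{B-1}/B^B$. Hence $(B+1)!\vol(W_B) = (1+1/B)^B \ge 2$, as required.

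The main obstacle is estimate (ii). Fix $j$ and substitute $V_j = B\xi_j - (j-1) \in [0,B-j+1]$; slicing $W_B$ at $\xi_j = t = (v+j-1)/B$, the cross-section factors as a product $W_L(t)\cdot W_H(t)$ of a lower and an upper ballot region. Via the shift $y_k \mapsto (k-v)\alpha + z_k$ (with $\alpha = 1/(m+1-v)$, $m = B-j$) followed by rescaling by $\alpha$, each factor reduces to the staircase volume
\[
G_n(c) := \vol\left\{\tilde z_1,\ldots,\tilde z_n:\ \tilde z_i \ge 0,\ \tilde z_{i+1} \ge \tilde z_i - 1,\ \tilde z_n \le c\right\},
\]
which satisfies the recursion $G_n(c) = \int_0^c G_{n-1}(s+1)\,ds$ and, at $c=1$, the Cayley-type identity $G_n(1) = (n+1)^{n-1}/n!$. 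A key simplification is that $W_H(t)$ is \emph{constant} in $t$ on each unit strip $v \in [s,s+1]$, while $W_L(t)$ contributes only polynomial factors $G_{j-1}(v+1)$ of controlled growth. Combining with the exponential decay of $\lambda^{-V_j}$, which reflects the fact that on $W_B$ the variable $V_j$ has typical size $\asymp \sqrt{j(B+1-j)/B}$ (Brownian-excursion scaling: interior terms are exponentially small, and only the $O_\lambda(1)$-many indices near $j=1$ and $j=B$ contribute a bounded amount each), one obtains $\int_{W_B}\lambda^{-V_j}\,d\bv\xi \le r_j\, C_\lambda\,\vol(W_B)$ with $\sum_j r_j = O_\lambda(1)$. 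Summing in $j$ yields (ii). The delicate step in this plan is the bookkeeping required to handle the higher strips $v\in[s,s+1]$ with $s \ge 1$, where the upper ballot region has trivial constraints on the first $s$ order statistics of the upper block and the analogous reduction must be adjusted accordingly.
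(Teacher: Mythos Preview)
Your overall strategy---split off the exponential-sum condition by Markov's inequality and then control (i) the volume of the ``ballot'' region $W_B=S_B(1,B)$ and (ii) the integral $\int_{W_B}\sum_j\lambda^{j-B\xi_j}\,d\bv\xi$---is exactly the paper's. For (i) you go further than the paper: you compute $B!\,\vol(W_B)=(B+1)^{B-1}/B^B$ exactly via the cycle lemma, whereas the paper merely quotes the inequality $Q_B(1,B)\ge(2B+1)^{-1}$ from Lemma~\ref{s5l1}. That part is fine and in fact sharper.

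The gap is in (ii). The paper does not prove this estimate at all: it simply invokes \cite[Lemma~4.9]{kf2}, noting that Ford's argument for base $2$ carries over verbatim to any base $\lambda>1$, to obtain $\int_{S_B(1,B)}F_B(\bv\xi)\,d\bv\xi\ll_k 1/(B+1)!$. What you have written for (ii) is a plan, not a proof, and several of its assertions are not justified as stated. In particular, the claim that after slicing at $\xi_j=t$ the upper factor $W_H(t)$ is ``constant in $t$ on each unit strip $v\in[s,s+1]$'' is not correct without further manipulation: for $v\ge1$ the lower boundary of the upper block is $t$ itself, so the raw volume $\vol\{t\le\xi_{j+1}\le\cdots\le\xi_B\le1,\ \xi_i\ge(i-1)/B\}$ certainly varies with $t$; whatever invariance you have in mind must come \emph{after} the affine change of variables you allude to, and then the parameter $c$ in your $G_n(c)$ will depend on $v$ as well. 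Likewise the Brownian-excursion heuristic (``interior terms are exponentially small'') is the right intuition, but turning it into the uniform bound $\sum_j r_j=O_\lambda(1)$ requires an honest tail estimate for each $V_j$ under the conditioning, which you have not supplied. Ford's proof of the cited lemma does essentially this via a careful term-by-term analysis using the explicit densities, and your outline would have to reproduce that work. As written, (ii) is a reasonable sketch of \emph{why} the estimate should hold, but not a proof; either complete the slicing computation or, as the paper does, cite \cite[Lemma~4.9]{kf2}.
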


The proof of the above lemma will be given in Section 5. If we use
Lemmas \ref{s3l8}\;and \ref{s3l9}, we get that
\be\begin{split}\widetilde{H}^{(k+1)}(x,\bv y,2\bv
y)\gg_{k,\delta,c}\frac{x}{(\log
y_1)^{k+1}}\frac{(B(k+1)\log\rho)^B}{B\cdot B!}&\asymp\frac{x}{(\log
y_1)^{k+1}}\frac{(e(k+1)\log\rho)^B}{B^{3/2}}\\
&\asymp_{k,\delta}\frac{x}{(\log y_1)^{Q(\frac1{\log\rho})}(\log\log
y_1)^{3/2}},\nonumber\end{split}\ee which completes the proof of
Theorem \ref{th2}\;and thus the proof of the lower bound in Theorem
\ref{th1}.


\section{Upper bounds.}\label{ub}

The proof of the upper bound we will give follows the corresponding
arguments in \cite{kf1}. The argument is simplified slightly by
Lemma \ref{s2l4}. As in the proof of the lower bounds, we will
assume that $y_1>C_2$ for some large enough positive constant
$C_2=C_2(k,\delta)$; else, we may use the trivial bound
$H^{(k+1)}(x,\bv y,2\bv y)\le x$ and immediately get the upper bound
in Theorem \ref{th1}.

\medskip

For $\bv y,\bv z\in\SR^k$ and $x\ge1$ define
$$H^{(k+1)}_*(x,\bv y,\bv z)=\lvert\{n\le x:\mu^2(n)=1,\;\tau_{k+1}(n,\bv y,\bv z)\ge 1\}\rvert.$$ Also, for $t\ge 1$
set
$$S^{(k+1)}(t)=\sum_{\substack{P^+(a)\le t\\\mu^2(a)=1}}\frac{L^{(k+1)}(a)}a.$$
Then we have the following estimate.
\begin{lemma}\label{s4l1} Let $3\le y_1,\dots,y_k\le x$ with
$y_1\cdots y_k\le x/2^{k+1}$. Set $z_i=2y_i$ for $i=1,\dots,k$ and
$z_{k+1}=\frac x{y_1\cdots y_k}$. Then
$$H^{(k+1)}_*(x,\bv y,2\bv y)-H^{(k+1)}_*(x/2,\bv y,2\bv
y)\ll_kx(\log Z)^{k+1}\sum_{i=1}^{k+1}\frac{S^{(k+1)}(z_i)}{(\log
z_i)^{2k+2}},$$ where $Z=\max_{1\le j\le k}z_j$.
\end{lemma}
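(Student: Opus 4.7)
The plan is to adapt the upper-bound strategy of Ford \cite{kf1} (proved there for $k=1$) to general $k$. The key idea is to stratify counted $n$'s by where the largest prime factor lands, decompose each $n$ into smooth and rough pieces, and extract the $S^{(k+1)}$ factor via the multiplicative bound $L^{(k+1)}(\alpha\beta) \le L^{(k+1)}(\alpha)\,\tau_{k+1}(\beta)$ for coprime $\alpha,\beta$ given in Lemma \ref{s3l1}(b).

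For each squarefree $n \in (x/2, x]$ counted on the left-hand side, fix a factorization $n = d_1\cdots d_k\,m$ with $d_i \in (y_i, 2y_i]$, set $d_{k+1} := m$ (so $d_{k+1} \in (z_{k+1}/2^{k+1}, z_{k+1}]$), and let $p := P^+(n)$. Partitioning by the unique index $i = i(n) \in \{1,\dots,k+1\}$ with $p \mid d_{i(n)}$ produces the outer sum $\sum_{i=1}^{k+1}$ of the target, and the inequality $p \le d_{i(n)} \le z_{i(n)}$ pins down the range of the inner sum over $p$. Writing $n = p\,a$ with $a$ squarefree and $P^+(a) < p$, and further decomposing $a = \alpha\beta$ into a ``smooth structural'' part $\alpha$ ($P^+(\alpha) \le T$) and a ``rough free'' part $\beta$ ($P^-(\beta) > T$) for a suitable threshold $T$, I would use the pointwise inequality
\[
\mathbf{1}[\tau_{k+1}(n, \bv y, 2\bv y) \ge 1]\,(\log 2)^k \;\le\; L^{(k+1)}(n) \;\le\; L^{(k+1)}(\alpha)\,\tau_{k+1}(\beta),
\]
which converts the count into a double sum over $\alpha$ and $\beta$.

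The $\beta$-sum is then controlled by a Mertens/Selberg--Delange-type estimate of $\sum_{\beta\asymp x/(p\alpha),\,P^-(\beta)>T,\,\mu^2=1}\tau_{k+1}(\beta)$, contributing logarithmic factors of the shape $(\log x/\log T)^{k+1}$; the $\alpha$-sum reduces to $S^{(k+1)}(T) \le S^{(k+1)}(z_i)$ by choosing $T \le z_i$. Two applications of Lemma \ref{s2l4}(b) with $h = k+1$ — one hidden inside the $\beta$-count normalization (using $P^+(\alpha)$ as the sieving threshold) and one directly on the resulting $\alpha$-sum — extract the full $(\log z_i)^{-(2k+2)}$ normalization of $S^{(k+1)}(z_i)$. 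The outer prefactor $(\log Z)^{k+1}$ arises from the $k-1$ Mertens-type sums over the unconstrained slot-divisors $d_j$ ($j\ne i,\, j \le k$), together with the innermost prime sum over $p \le z_i$, after which the final sum over $i\in\{1,\dots,k+1\}$ completes the bound.

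The main obstacle is the careful log-bookkeeping needed to produce exactly the exponents $(k+1)$ in the numerator and $(2k+2)$ in the denominator. The naive use of $\mathbf{1} \le L^{(k+1)}/(\log 2)^k$ alone is much too lossy (since $L^{(k+1)}(n)$ can be as large as $(k+1)^{\omega(n)}(\log 2)^k$ for typical $n$), so the required savings must come from the simultaneous interaction between the $\alpha$-$\beta$ split, the correct choice of $T$, and Lemma~\ref{s2l4}(b) applied at exactly the right stages — each of the two invocations saving a factor of $(\log z_i)^{-(k+1)}$. Treating the case $i = k+1$ (where the distinguished prime $p$ lives in the complementary factor $m$ rather than in a constrained divisor) uniformly with the cases $i \le k$ is the other delicate point I would have to handle.
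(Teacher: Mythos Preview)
Your proposal contains a genuine gap. The pointwise inequality $\mathbf{1}[\tau_{k+1}(n,\bv y,2\bv y)\ge1](\log 2)^k\le L^{(k+1)}(n)$ is trivially true for \emph{every} $n$ (since $1\cdot1\cdots1\mid n$ already gives $L^{(k+1)}(n)\ge(\log2)^k$), so invoking it discards the localization entirely; the only arithmetic constraint you retain afterwards is $P^+(n)\le z_i$. Extracting a single prime and running Selberg--Delange on the $\beta$-piece then yields, in the diagonal regime $Z\asymp z_i$, a bound of size $x\,S^{(k+1)}(z_i)/\log z_i$, whereas the target is $x\,S^{(k+1)}(z_i)/(\log z_i)^{k+1}$. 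You are short by exactly $(\log z_i)^k$, and no amount of bookkeeping with Lemma~\ref{s2l4}(b) can manufacture the missing $k$ inverse logarithms once the localization has been thrown away. (Your accounting for the prefactor $(\log Z)^{k+1}$ is also off: $k-1$ Mertens sums plus one prime sum give $k$ logarithms, not $k+1$.)

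The paper's argument is structurally different. One writes $n=d_1\cdots d_{k+1}$, orders the factors by a permutation $\sigma$ so that $P^+(d_{\sigma(1)})<\cdots<P^+(d_{\sigma(k+1)})$, and extracts the $k$ primes $p_j=P^+(d_{\sigma(j)})$ for $j=1,\dots,k$ (not $P^+(n)$, which lies in the rough cofactor $b$). The point is that after removing $p_1,\dots,p_k$, the localization on $n$ becomes the condition $(\log(c_1y_{\sigma(1)}/p_1),\dots,\log(c_ky_{\sigma(k)}/p_k))\in\mathcal{L}^{(k+1)}(aa')$, where $a$ is the $p_1$-smooth part and $a'$ the part with primes in $(p_1,p_k)$. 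The covering Lemma~\ref{s2l3} then bounds the sum over $p_1,\dots,p_k$ by $\ll_k L^{(k+1)}(aa')/\log^k(2Q)$ with $Q=\max\{P^+(a),\,y_{\sigma(1)}/a\}$; this is where $L^{(k+1)}$ enters, carrying the $k$ inverse logs you are missing. A sieve on $b$ contributes one more factor $1/\log(2Q)$, the Mertens sum over $a'\in\mathscr{P}(Q,Z)$ gives $(\log Z/\log(2Q))^{k+1}$, and a \emph{single} application of Lemma~\ref{s2l4}(b) with $h=2k+2$ (not two with $h=k+1$) on the $a$-sum finishes the proof.
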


\begin{proof} Let $n\in(x/2,x]$ be a square-free integer such that
$\tau_{k+1}(n,\bv y,2\bv y)\ge 1$. Then we may write $n=d_1\cdots
d_{k+1}$ with $y_i<d_i\le2y_i$ for $i=1,\dots,k$. Hence
$d_{k+1}\in(\frac x{2^{k+1}y_1\cdots y_k},\frac x{y_1\cdots y_k}].$
So if we set $y_{k+1}=\frac x{2^{k+1}y_1\cdots y_k}\ge1$, then
$y_i<d_i\le z_i$ for $i=1,\dots,k+1$. For a unique permutation
$\sigma\in S_{k+1}$ we have that
$P^+(d_{\sigma(1)})<\cdots<P^+(d_{\sigma(k+1)}).$ Set
$p_j=P^+(d_{\sigma(j)})$ for $j=1,\dots,k+1$. Then we may write
$n=aa^\prime p_1\cdots p_kb$, where $P^+(a)<p_1<p_k<P^-(b)$ and all
the prime divisors of $a^\prime$ lie in $(p_1,p_k)$. Observe that
$d_{\sigma(1)}=p_1d$ for some integer $d$ with $P^+(d)<p_1$. In
particular, $d|a$ and thus $y_{\sigma(1)}<d_{\sigma(1)}=p_1d\le
p_1a$. Consequently,
$$p_1>Q=\max\left\{P^+(a),\frac{y_{\sigma(1)}}a\right\}.$$ Moreover,
$$P^+(a^\prime)\le p_k=\max_{1\le j\le k}P^+(d_{\sigma(j)})\le\max_{1\le i\le k}P^+(d_i)\le Z,$$ by the choice of $\sigma$. In particular,
$a^\prime\in\mathscr{P}(Q,Z)$. Also, we have that $b>p_k$, since
$p_{k+1}|b$ and $p_{k+1}>p_k$. Next, note that
$$(d_{\sigma(1)}/p_1)\cdots(d_{\sigma(k)}/p_k)|aa^\prime\quad\text{and}\quad\frac{y_{\sigma(i)}}{p_i}<\frac{d_{\sigma(i)}}{p_i}\le
\frac{z_{\sigma(i)}}{p_i}\quad(1\le i\le k).$$ So there are numbers
$c_1,\dots,c_k\in\{1,2,2^2,\dots,2^k\}$ such that
$$\bv y^\prime:=\left(\log\frac{c_1y_{\sigma(1)}}{p_1},\dots,\log\frac{c_ky_{\sigma(k)}}{p_k}\right)\in\mathcal{L}^{(k+1)}(aa^\prime).$$
Hence \be\label{s4e0}\begin{split}&H^{(k+1)}_*(x,\bv z,2\bv
z)-H^{(k+1)}_*(x/2,\bv z,2\bv z)\\
&\quad\le\sum_{\substack{\sigma\in
S_{k+1}\\c_1,\dots,c_k}}\sum_{P^+(a)\le
z_{\sigma(1)}}\sum_{\substack{a^\prime\in\mathscr{P}(Q,Z)\\
\mu^2(aa^\prime)=1}}\sum_{\substack{Q<p_1<\cdots<p_k\\\bv
y^\prime\in\mathcal{L}^{(k+1)}(aa^\prime)}}\sum_{\substack{p_k<b\le
x/(aa'p_1\cdots p_k)\\P^-(b)>p_k}}1.\end{split}\ee Note that the
innermost sum in the right hand side of \eqref{s4e0}\;is
$$\ll\frac{x}{aa^\prime p_1\cdots p_k\log p_k}\ll\frac{x}{aa^\prime
p_1\cdots p_k\log(2Q)},$$ by Lemma \ref{s2l1}. Therefore
\eqref{s4e0}\;becomes
\be\label{s4e2c}\begin{split}&H^{(k+1)}_*(x,\bv z,2\bv
z)-H^{(k+1)}_*(x/2,\bv z,2\bv z)\\
&\quad\ll x\sum_{\substack{\sigma\in
S_{k+1}\\c_1,\dots,c_k}}\sum_{P^+(a)\le
z_{\sigma(1)}}\sum_{\substack{a^\prime\in\mathscr{P}(Q,Z)\\
\mu^2(aa^\prime)=1}}\frac1{aa^\prime\log(2Q)}\sum_{\substack{Q<p_1<\cdots<p_k\\\bv
y^\prime\in\mathcal{L}^{(k+1)}(aa^\prime)}}\frac1{p_1\cdots
p_k}.\end{split}\ee Fix $a$, $a'$, $\sigma$ and $c_1,\dots,c_k$ as
above. Let $m_1\cdots m_k|aa^\prime$ and set
$$I=[\log(m_1/2),\log m_1)\times\cdots\times[\log(m_k/2),\log m_k)$$
as well as $$U_i=\frac{c_i y_{\sigma(i)}}{2m_i}\quad(1\le i\le k).$$
Then $\bv y^\prime\in3I=[\log
(m_1/4),\log(2m_1))\times\cdots\times[\log (m_k/4),\log (2m_k))$ if,
and only if, $U_i<p_i\le 8U_i$ for all $i\in\{1,\dots,k\}$. Thus
\be\label{s4e2d}\begin{split}\sum_{\substack{Q<p_1<\cdots<p_k\\\bv
y^\prime\in 3I}}\frac1{p_1\cdots
p_k}&\le\prod_{i=1}^k\sum_{\substack{U_i<p_i\le8U_i\\p_i>Q}}\frac1p_i\ll_k\prod_{i=1}^k\frac1{\log(\max\{2Q,U_i\})}\le\frac1{\log^k(2Q)}.\end{split}\ee
If $\{I_r\}_{r=1}^R$ is the collection of the cubes
$[\log(m_1/2),\log m_1)\times \cdots \times[\log(m_k/2),\log m_k)$
with $m_1\cdots m_k|aa^\prime$, then Lemma~\ref{s2l3} implies that
there exists a sub-collection $\{I_{r_s}\}_{s=1}^S$ of mutually
disjoint cubes such that
$$\mathcal{L}^{(k+1)}(aa^\prime)\subset\bigcup_{s=1}^S3I_{r_s}\quad\text{and}\quad S(\log
2)^k=\vol\left(\bigcup_{s=1}^SI_{r_s}\right)\le
L^{(k+1)}(aa^\prime).$$ Thus \eqref{s4e2d}\;along with Lemma
\ref{s3l1}(b) yield that
$$\sum_{\substack{Q<p_1<\cdots<p_k\\\bv{y^\prime}\in\mathcal{L}^{(k+1)}(aa^\prime)}}\frac1{p_1p_2\cdots
p_k}\ll_k\frac{L^{(k+1)}(aa^\prime)}{\log^k(2Q)}\le\frac{\tau_{k+1}(a^\prime)L^{(k+1)}(a)}{\log^k(2Q)}.$$
Inserting the above estimate into \eqref{s4e2c}\;we find that
\be\begin{split}&H^{(k+1)}_*(x,\bv z,2\bv z)-H^{(k+1)}_*(x/2,\bv
z,2\bv z)\\
&\quad\ll_k\sum_{\sigma\in S_{k+1}}\sum_{\substack{P^+(a)\le
z_{\sigma(1)}\\\mu^2(a)=1}}\sum_{\substack{a^\prime\in\mathscr{P}(Q,Z)\\
\mu^2(a^\prime)=1}}\frac{\tau_{k+1}(a^\prime)}{a^\prime}\frac{L^{(k+1)}(a)}{a\log^{k+1}(2Q)}\\
&\quad\ll_k(\log Z)^{k+1}\sum_{\sigma\in
S_{k+1}}\sum_{\substack{a\le
P^+(z_{\sigma(1)})\\\mu^2(a)=1}}\frac{L^{(k+1)}(a)}{a\log^{2k+2}(P^+(a)+z_{\sigma(1)}/a)},\nonumber\end{split}\ee
since $$\sum_{\substack{a^\prime\in\mathscr{P}(Q,Z)\\
\mu^2(a^\prime)=1}}\frac{\tau_{k+1}(a^\prime)}{a^\prime}=\prod_{Q<p\le
Z}\left(1+\frac{k+1}p\right)\ll_k\left(\frac{\log
Z}{\log(2Q)}\right)^{k+1}.$$ To complete the proof use Lemma
\ref{s2l4}(b)\;to see that
$$
\sum_{\substack{P^+(a)\le t\\\mu^2(a)=1}}\frac{L^{(k+1)}(a)}{a\log^{2k+2}(P^+(a)+t/a)}
	\ll_k\frac{S^{(k+1)}(t)}{(\log t)^{2k+2}}\quad(t\ge2).
$$
\end{proof}

Next, we use the lemma we just proved to bound $H^{(k+1)}(x,\bv
y,2\bv y)$ from above.

\begin{lemma}\label{s4l2} Let $k\ge2$ and $0<\delta\le1$. For $x\ge3$ and
$3\le y_1\le\cdots\le y_k$ with $2^{k+1}y_1\cdots y_k\le
x/y_1^\delta$ we have that $$H^{(k+1)}(x,\bv y,2\bv y)\ll_{k,\delta}
x\frac{(\log y_k)^{k+1}}{(\log y_1)^{2k+2}}S^{(k+1)}(y_1).$$
\end{lemma}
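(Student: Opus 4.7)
The plan is to combine a standard squarefree reduction with a dyadic decomposition in $x$, applying Lemma~\ref{s4l1} at each scale. Writing $n=bm$ uniquely with $b$ squarefull, $m$ squarefree, and $(b,m)=1$, and factoring each divisor $d_i = s_i t_i$ with $s_i\mid b$ and $t_i\mid m$, the squarefree part $m$ satisfies $\tau_{k+1}(m,(y_1/s_1,\dots,y_k/s_k),(2y_1/s_1,\dots,2y_k/s_k))\ge 1$. Summing over squarefull $b$ (with $\sum_b 1/b<\infty$) and admissible tuples $(s_1,\dots,s_k)$, noting that the shifted data still satisfies the hypothesis of Lemma~\ref{s4l1} up to constants depending on $k$ and $\delta$, reduces the problem to bounding $H^{(k+1)}_*(x,\bv y,2\bv y)$.

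Next, set $x_j = x/2^j$ and let $J$ be the largest integer with $x_J\ge 2^{k+1}y_1\cdots y_k$. Telescoping yields
$$H^{(k+1)}_*(x,\bv y,2\bv y)\le \sum_{j=0}^{J}\bigl[H^{(k+1)}_*(x_j,\bv y,2\bv y)-H^{(k+1)}_*(x_{j+1},\bv y,2\bv y)\bigr]+H^{(k+1)}_*(x_{J+1},\bv y,2\bv y).$$
The tail is at most $x_{J+1}<2^{k+1}y_1\cdots y_k\le x/y_1^\delta$, which is absorbable into the target bound since $y_1^\delta$ dominates any fixed power of $\log y_1$ for $y_1>C_2$. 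For each $j\le J$, Lemma~\ref{s4l1} bounds the slab by $\ll_k x_j(\log y_k)^{k+1}[\sum_{i=1}^k S^{(k+1)}(2y_i)/(\log 2y_i)^{2k+2}+S^{(k+1)}(z_{k+1}(j))/(\log z_{k+1}(j))^{2k+2}]$, where $z_{k+1}(j):=x_j/(y_1\cdots y_k)$.

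The central task is to compare $S^{(k+1)}(\cdot)$ at various arguments to $S^{(k+1)}(y_1)$. Applying Lemma~\ref{s3l1}(b) to the factorization $a=a_1a_2$ with $a_1$ composed of primes $\le y_1$ and $a_2$ of primes in $(y_1,t]$, together with Mertens' theorem, yields
$$S^{(k+1)}(t)\ll_k \Bigl(\frac{\log t}{\log y_1}\Bigr)^{k+1} S^{(k+1)}(y_1)\qquad(t\ge y_1),$$
hence $S^{(k+1)}(t)/(\log t)^{2k+2}\ll_k S^{(k+1)}(y_1)/(\log y_1)^{2k+2}$ whenever $t\ge y_1$. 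This handles the $i\le k$ terms (since $y_i\ge y_1$) and the $i=k+1$ term for those $j$ with $z_{k+1}(j)\ge y_1$; after the geometric summation $\sum_j x_j\ll x$, the total contribution matches the target. The principal obstacle is the range of $j$ with $z_{k+1}(j)<y_1$: one must show that the geometric sum
$$\sum_{j:z_{k+1}(j)<y_1} x_j \frac{S^{(k+1)}(z_{k+1}(j))}{(\log z_{k+1}(j))^{2k+2}} = y_1\cdots y_k\sum_{j} z_{k+1}(j)\frac{S^{(k+1)}(z_{k+1}(j))}{(\log z_{k+1}(j))^{2k+2}}$$
is bounded by $\ll_\delta x S^{(k+1)}(y_1)/(\log y_1)^{2k+2}$. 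This requires exploiting the near-monotonicity of $t\mapsto tS^{(k+1)}(t)/(\log t)^{2k+2}$ on the relevant range, so that the sum is controlled by its value near the upper endpoint $t\sim y_1$, combined with the crucial hypothesis $y_1^{\delta}y_1\cdots y_k\le x/2^{k+1}$ to absorb the resulting factor of $y_1\cdot y_1\cdots y_k/x$ into the implied constant.
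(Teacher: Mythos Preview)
Your overall architecture (squarefree reduction, dyadic telescoping, Lemma~\ref{s4l1} at each scale, then comparing $S^{(k+1)}(t)/(\log t)^{2k+2}$ to its value at $y_1$) matches the paper. However, the final step you sketch for the ``bad range'' $z_{k+1}(j)<y_1$ contains a genuine error. You propose to bound the dyadic sum by its value near the top endpoint $t\sim y_1$, obtaining a factor $y_1\cdot y_1\cdots y_k$, and then to ``absorb the resulting factor of $y_1\cdot y_1\cdots y_k/x$ into the implied constant'' using the hypothesis $2^{k+1}y_1\cdots y_k\le x/y_1^{\delta}$. But that hypothesis only yields
\[
\frac{y_1\cdot y_1\cdots y_k}{x}\le \frac{y_1^{1-\delta}}{2^{k+1}},
\]
which is unbounded for every $\delta<1$. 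So as written the argument fails precisely in the regime $0<\delta<1$ that the lemma must cover. (A case split on whether $z_{k+1}(0)=x/(y_1\cdots y_k)$ is $\ge y_1$ or $<y_1$ would rescue it, the latter case using $z_{k+1}(0)\ge 2^{k+1}y_1^{\delta}$ so that $\log z_{k+1}(0)\asymp_{\delta}\log y_1$; but this is not what you wrote.)

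The paper avoids the issue entirely by a different truncation: instead of telescoping all the way down to $x_J\sim 2^{k+1}y_1\cdots y_k$, it only telescopes over $0\le r\le \frac{k+1}{\log 2}\log\log y_1$, i.e.\ down to $x'/(\log y_1)^{k+1}$, and bounds the remaining tail trivially by $x'/(\log y_1)^{k+1}$ (which is acceptable since $S^{(k+1)}(y_1)\ge L^{(k+1)}(1)=(\log 2)^k$). With this shorter range one has
\[
z_{k+1}'=\frac{x'}{2^r y_1'\cdots y_k'}\ge \frac{x}{(\log y_1)^{3k+3}\,y_1\cdots y_k}\ge y_1^{\delta/2}
\]
for $y_1>C_2(k,\delta)$, so \emph{every} $z_i'$ satisfies $z_i'\ge y_1^{\delta/2}$. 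The comparison $S^{(k+1)}(t)/(\log t)^{2k+2}\ll_{k,\delta}S^{(k+1)}(y_1)/(\log y_1)^{2k+2}$ for $t\ge y_1^{\delta/2}$ (which you derive correctly for $t\ge y_1$, and which extends to $t\ge y_1^{\delta/2}$ by the same splitting $a=a_1a_2$ at the threshold $y_1^{\delta/2}$) then applies uniformly, and no bad range ever appears. A minor related point: your squarefree reduction also needs the truncation $b\le(\log y_1)^{2k+2}$ that the paper uses; without it the shifted parameters $y_i/s_i$ are not controlled and the inner application of Lemma~\ref{s4l1} is not uniform in $b$.
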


\begin{proof} First, we reduce the problem to estimating
$H^{(k+1)}_*(x,\bv y,2\bv y)$. Let $n\in\SN$ with $\tau_{k+1}(n,\bv
y,2\bv y)\ge1$. Write $n=n^\prime n^{\prime\prime}$ with $n^\prime$
being square-free, $n^{\prime\prime}$ square-full and
$(n^\prime,n^{\prime\prime})=1$. The number of $n\le x$ with
$n^{\prime\prime}>(\log y_1)^{2k+2}$ is
$$\le
x\sum_{\substack{n^{\prime\prime}\;{\rm
square-full}\\n^{\prime\prime}>(\log
y_1)^{2k+2}}}\frac1{n^{\prime\prime}}\ll\frac x{(\log y_1)^{k+1}}.$$
Assume now that $n^{\prime\prime}\le(\log y_1)^{2k+2}.$ For some
product $f_1\cdots f_k|n^{\prime\prime}$ there is a product
$e_1\cdots e_k|n^\prime$ such that $y_i/f_i<e_i\le2y_i/f_i$ for
$i=1,\dots,k$. Therefore \be\label{s4e4}\begin{split}H^{(k+1)}(x,\bv
y,2\bv y)\le\sum_{\substack{n^{\prime\prime}\;{\rm
square-full}\\n^{\prime\prime}\le(\log y_1)^{2k+2}}}\sum_{f_1\cdots
f_k|n^{\prime\prime}}H^{(k+1)}_*\left(\frac
x{n^{\prime\prime}},\left( \frac{y_1}{f_1},\dots,\frac{y_k}{f_k}\right),2\left(\frac{y_1}{f_1},\dots,\frac{y_k}{f_k}\right)\right)\\
	+ O\left(\frac x{(\log y_1)^{k+1}}\right).
\end{split}\ee Fix a
square-full integer $n^{\prime\prime}\le(\log y_1)^{2k+2}$ and
positive integers $f_1,\dots,f_k$ with $f_1\cdots
f_k|n^{\prime\prime}$. Put $x^\prime=x/n^{\prime\prime}$ and
$y_i^\prime=y_i/f_i$ for $i=1,\dots,k$. Each
$n^\prime\in(x^{\prime}/(\log y_1)^{k+1},x^{\prime}]$ lies in a
interval $(2^{-r-1}x^{\prime},2^{-r}x^{\prime}]$ for some integer
$0\le r\le\frac{k+1}{\log 2}\log_2y_1$. We will apply Lemma
\ref{s4l1}\;with $2^{-r}x^{\prime}$ in place of $x$ and
$y_1^\prime,\dots,y_k^\prime$ in place of $y_1,\dots,y_k$. Set
$z_i^\prime=2y_i^\prime$ for $i=1,\dots,k$ and
$z_{k+1}^\prime=2^{-r}x^\prime/(y_1^\prime\cdots y_k^\prime)$.
Moreover, let $\bv y^\prime=(y_1^\prime,\dots,y_k^\prime)$. Note
that $\sqrt{y_1}\le z_j^\prime\le2y_k$ for all $j\in\{1,\dots,k\}$
and
$$z_{k+1}^\prime=\frac{xf_1\cdots f_k}{2^rn^{\prime\prime}y_1\cdots y_k}\ge\frac x{(\log y_1)^{3k+3}y_1\cdots y_k}\ge
y_1^{\delta/2},$$ provided that $C_2$ is large enough. So
\be\begin{split}H^{(k+1)}_*(x^\prime,\bv y^\prime,2\bv
y^\prime)&\ll_k\frac{x^\prime}{(\log y_1)^{k+1}} +\sum_{0\le
r\le\frac{k+1}{\log
2}\log_2y_1}\frac{x^\prime}{2^r}\sum_{i=1}^{k+1}(\log y_k)^{k+1}
\frac{S^{(k+1)}(z^\prime_i)}{(\log z_i^\prime)^{2k+2}}\\
&\ll_{k,\delta}\frac{x^\prime}{(\log y_1)^{k+1}}+x^\prime(\log
y_k)^{k+1}\max\left\{\frac{S^{(k+1)}(t)}{(\log t)^{2k+2}}:t\ge
y_1^{\delta/2}\right\}.\nonumber\end{split}\ee By the above estimate,
\eqref{s4e4}\;and the straightforward inequalities
$$\sum_{n^{\prime\prime}\;{\rm
square-full}}\frac{\tau_{k+1}(n^{\prime\prime})}{n^{\prime\prime}}\ll_k1$$
and $S^{(k+1)}(t)\ge L^{(k+1)}(1)=(\log 2)^k$, we deduce that
$$H^{(k+1)}(x,\bv y,2\bv y)\ll_{k,\delta}x(\log
y_k)^{k+1}\max\left\{\frac{S^{(k+1)}(t)}{(\log t)^{2k+2}}:t\ge
y_1^{\delta/2}\right\}.$$ Finally, note that for every $t\ge
y_1^{\delta/2}$ we have that
$$S^{(k+1)}(t)\le\sum_{\substack{P^+(a_1)\le
y_1^{\delta/2}\\\mu^2(a_1)=1}}\frac{L^{(k+1)}(a_1)}{a_1}
\sum_{\substack{a_2\in\mathscr{P}(y_1^{\delta/2},t)\\\mu^2(a_2)=1}}\frac{\tau_{k+1}(a_2)}{a_2}\ll_{k,\delta}\left(\frac{\log
t}{\log y_1}\right)^{k+1}\sum_{\substack{P^+(a_1)\le
y_1\\\mu^2(a_1)=1}}\frac{L^{(k+1)}(a_1)}{a_1},$$ where we used Lemma
\ref{s3l1}(b). Therefore
$$\frac{S^{(k+1)}(t)}{(\log t)^{2k+2}}\ll_k\frac{S^{(k+1)}(y_1)}{(\log y_1\log
t)^{k+1}}\ll_{k,\delta}\frac{S^{(k+1)}(y_1)}{(\log y_1)^{2k+2}},$$
which completes the proof of the lemma.
\end{proof}

We proceed by bounding $S^{(k+1)}(y_1)$ from above. First, define
$$\omega_k(a)=\lvert\{p|a:p>k\}\rvert$$ and $$S^{(k+1)}_r(y_1)=\sum_{\substack{P^+(a)\le
y_1\\\omega_k(a)=r,\mu^2(a)=1}}\frac{L^{(k+1)}(a)}a.$$ Then we have
the following estimate of $S^{(k+1)}_r(y_1)$ when $1\le
r\ll_k\log\log y_1$.

\begin{lemma}\label{s4l3} Let $v=\left\lfloor\frac{\log\log y_1}{\log\rho}\right\rfloor$ and assume
that $1\le r\le (10k)v$. Then
$$S^{(k+1)}_r(y_1)\ll_k((k+1)\log\log
y_1)^rU_r(v;k),$$ where
$$U_r(v;k)=\idotsint\limits_{0\le\xi_1\le\cdots\le\xi_r\le
1}\left(\min_{0\le j\le
r}\rho^{-j}(\rho^{v\xi_1}+\cdots+\rho^{v\xi_j}+1)\right)^kd\bv\xi.$$
\end{lemma}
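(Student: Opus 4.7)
The plan is to reduce $S^{(k+1)}_r(y_1)$ to a weighted sum over ordered $r$-tuples of primes, and then convert this prime sum into the integral $U_r(v;k)$ via a $\log\log$-change of variables.

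First I would split $a=a_1 a_2$ with $P^+(a_1)\le k$ and $P^-(a_2)>k$. Since $a$ is squarefree, $a_1\mid\prod_{p\le k}p$, so $a_1$ takes only $O_k(1)$ values with $\sum 1/a_1 \ll_k 1$ and $\tau_{k+1}(a_1)\ll_k 1$. By Lemma \ref{s3l1}(b), $L^{(k+1)}(a_1 a_2)\le\tau_{k+1}(a_1)L^{(k+1)}(a_2)$, hence
\[
S_r^{(k+1)}(y_1)\ll_k\sum_{k<p_1<\cdots<p_r\le y_1}\frac{L^{(k+1)}(p_1\cdots p_r)}{p_1\cdots p_r}.
\]
I would then bound $L^{(k+1)}(p_1\cdots p_r)$ by Lemma \ref{s3l1}(c), extended to $j=0$ via the trivial bound $L^{(k+1)}(p_1\cdots p_r)\le (k+1)^r(\log 2)^k$ from part (a), using the convention $\log(p_1\cdots p_0)=0$. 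Since $\rho^k=k+1$, writing $(k+1)^{r-j}=(k+1)^r\rho^{-jk}$ yields
\[
L^{(k+1)}(p_1\cdots p_r)\le (k+1)^r\Bigl(\min_{0\le j\le r}\rho^{-j}(\log(p_1\cdots p_j)+\log 2)\Bigr)^k.
\]

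After pulling out the factor $(k+1)^r$, I would pass to an integral via the substitution $u_i=\log\log p_i$. Mertens' theorem says that $\sum 1/p$ over $p$ with $u\in[a,b]$ equals $b-a+O(1)$, so partitioning $(k,y_1]$ into short $\log\log$-blocks (for instance the blocks $D_j$ from Section \ref{lb}) turns the discrete sum into a Riemann sum for the corresponding integral. The further scaling $u_i=v\xi_i\log\rho$ maps $u_i\in[\log\log k,\log\log y_1]$ into $\xi_i\in[0,1]$, converts $\log p_i=e^{u_i}$ into $\rho^{v\xi_i}$, and introduces a Jacobian $(v\log\rho)^r\asymp(\log\log y_1)^r$. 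The bracket $\log(p_1\cdots p_j)+\log 2$ becomes $\rho^{v\xi_1}+\cdots+\rho^{v\xi_j}+\log 2$, and $\log 2$ is absorbed into the $+1$ of the integrand at the cost of inflating the implied constant. Combining these factors produces the claimed bound $((k+1)\log\log y_1)^r U_r(v;k)$.

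The main obstacle is controlling the error in the sum-to-integral passage. Each Mertens approximation introduces a relative error of size $O(1/v)$ per prime factor, and these errors accumulate multiplicatively across the $r$ factors; the hypothesis $r\le(10k)v$ is exactly what keeps $(1+O(1/v))^r$ bounded by a constant depending only on $k$. A secondary subtlety is that the minimum $\min_{0\le j\le r}\rho^{-j}(\rho^{v\xi_1}+\cdots+\rho^{v\xi_j}+1)$ is not monotone in the individual $\xi_i$, so one must verify that on each small $\log\log$-cube of the partition this minimum varies by only a bounded multiplicative factor, ensuring that the Riemann sum is dominated by the corresponding integral up to a constant depending on $k$.
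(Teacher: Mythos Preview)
Your proposal is correct and follows essentially the same route as the paper: split off the prime factors $\le k$, apply Lemma~\ref{s3l1}(c) to get the $\min_j$ bound with the factor $(k+1)^r$, partition the primes $p_i$ into the blocks $D_j$, and recognise that the resulting sum over ``types'' $\bv j=(j_1,\dots,j_r)$ is a Riemann sum for $U_r(v;k)$. The paper makes the last step explicit by observing that, for a fixed type $\bv j$ with multiplicities $b_s$, the ordered prime sum is at most $(\log\rho)^r/(b_1!\cdots b_{v+L_k+1}!)=((v+L_k+1)\log\rho)^r\vol(I(\bv j))$, and then that on the cube $I(\bv j)$ one has $\rho^{j_i}\ll_k\rho^{v\xi_i}$, so the minimum in $F(\bv j)$ is within a constant of the integrand. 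Your ``secondary subtlety'' is exactly this point. Your account of where $r\le(10k)v$ enters is also right: it is needed to pass from $((v+L_k+1)\log\rho)^r$ to $(\log\log y_1)^r$, i.e.\ to absorb $(1+O_k(1/v))^r$; describing this as accumulated Mertens error is slightly loose wording, but the mechanism you identify is the correct one.
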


\begin{proof} For the sets $D_j$ constructed in Section
3 we have that $$\{p\;\text{prime}:k<p\le
y_1\}\subset\bigcup_{j=1}^{v+L_k+1}D_j,$$ by Lemma \ref{s3l3}.
Consider a square-free integer $a=bp_1\cdots p_r$ with $P^+(b)\le
k<p_1<\cdots<p_r$ and define $j_i$ by $p_i\in D_{j_i}$, $1\le i\le
r$. By Lemmas \ref{s3l1}\;and \ref{s3l3}, we have
\be\begin{split}L^{(k+1)}(a)&\le\tau_{k+1}(b)L^{(k+1)}(p_1\cdots
p_r)\\
&\le\tau_{k+1}(b)\min_{0\le s\le
r}(k+1)^{r-s}(\log p_1+\cdots+\log p_s+\log2)^k\nonumber\\
&\ll_k\tau_{k+1}(b)(k+1)^r F(\bv j),\end{split}\ee where
$$F(\bv j):=\left(\min_{0\le s\le
r}\rho^{-s}(\rho^{j_1}+\cdots+\rho^{j_s}+1)\right)^k.$$ Furthermore,
we have that
$$\sum_{\substack{P^+(b)\le k\\\mu^2(b)=1}}\frac{\tau_{k+1}(b)}b\ll_k1.$$
So if $\mathcal{J}$ denotes the set of vectors $\bv
j=(j_1,\dots,j_r)$ satisfying $1\le j_1\le\cdots\le j_r\le v+L_k+1$,
then \be\label{s4e100}S^{(k+1)}_r(y_1)\ll_k(k+1)^r\sum_{\bv
j\in\mathcal{J}}F(\bv j)\sum_{\substack{p_1<\cdots<p_r\\p_i\in
D_{j_i}\;(1\le i\le r)}}\frac1{p_1\cdots p_r}.\ee Fix $\bv
j=(j_1,\dots,j_r)\in\mathcal{J}$ and let $b_s=|\{1\le i\le
r:j_i=s\}|$ for $1\le s\le v+L_k+1$. By \eqref{s3e3}\;and the
hypothesis that $r\le 10kv$, the sum over $p_1,\dots,p_r$
in~\eqref{s4e100} is at most
\be\label{s4e101}\begin{split}\prod_{s=1}^{v+L_k+1}\frac1{b_s!}\left(\sum_{p\in
D_s}\frac1p\right)^{b_s}\le\frac{(\log\rho)^r}{b_1!\cdots
b_{v+L_k+1}!}&=((v+L_k+1)\log\rho)^r\vol(I(\bv j))\\
&\ll_k(\log\log y_1)^r\vol(I(\bv j)),\end{split}\ee where
$$I(\bv j):=\{0\le
\xi_1\le\cdots\le\xi_r\le1:j_i-1\le(v+L_k+1)\xi_i<j_i\;(1\le i\le
r)\},$$ because for each $(\xi_1,\dots,\xi_r)\in I(\bv j)$ and
$s\in\{1,\dots,v+L_k-1\}$ there are exactly $b_s$ numbers $\xi_j$
satisfying $s-1\le(v+L_k+1)\xi_i<s$ and $\vol(\{0\le x_1\le\cdots\le
x_b\le 1\})=1/b!$. Inserting \eqref{s4e101}\;into \eqref{s4e100}\;we
deduce that
$$S^{(k+1)}_r(y_1)\ll_k((k+1)\log\log y_1)^r\sum_{\bv j\in\mathcal{J}}F(\bv
j)\vol(I(\bv j)).$$ Finally, note that for every $\bv\xi\in I(\bv
j)$ we have that
$\rho^{j_i}\le\rho^{1+(v+L_k+1)\xi_i}\le\rho^{L_k+2}\rho^{v\xi_i}$
and thus
$$F(\bv j)\ll_k\left(\min_{0\le g\le
r}\rho^{-g}(\rho^{v\xi_1}+\cdots+\rho^{v\xi_g}+1)\right)^k,$$ which
in turn implies that
$$\sum_{\bv j\in\mathcal{J}}F(\bv j)\vol(I(\bv
j))\ll_kU_r(v;k).$$ This completes the proof.
\end{proof}

The proof of the next lemma will be given in section \ref{ord_stat}.

\begin{lemma}\label{s4l4} Suppose $r,v$ are integers satisfying $1\le r\le
(10k)v$. Then
$$U_r(v;k)\ll_k\frac{1+\abs{v-r}}{(r+1)!((k+1)^{r-v}+1)}.$$
\end{lemma}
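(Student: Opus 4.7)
For each $\bv\xi\in\Delta:=\{0\le\xi_1\le\cdots\le\xi_r\le 1\}$, write $\sigma_j(\bv\xi)=\rho^{-j}(1+\rho^{v\xi_1}+\cdots+\rho^{v\xi_j})$, so that $M(\bv\xi)=\min_{0\le j\le r}\sigma_j(\bv\xi)$. The plan is to decompose $\Delta=\bigsqcup_{j=0}^r A_j$ where $A_j=\{\bv\xi\in\Delta:\sigma_j=M\}$, giving
$$
U_r(v;k)=\sum_{j=0}^r\int_{A_j}\sigma_j^k\,d\bv\xi.
$$
The key observation is that on $A_j$ the defining inequalities $\sigma_j\le\sigma_{j-1}$ and $\sigma_j\le\sigma_{j+1}$, transported via the one-step recursion $\sigma_{j+1}=\rho^{-1}\sigma_j+\rho^{v\xi_{j+1}-(j+1)}$, yield an upper bound $\rho^{v\xi_j}\le(\rho-1)(1+\rho^{v\xi_1}+\cdots+\rho^{v\xi_{j-1}})$ on $\xi_j$ and a lower bound $\rho^{v\xi_{j+1}}\ge(\rho-1)(1+\rho^{v\xi_1}+\cdots+\rho^{v\xi_j})$ on $\xi_{j+1}$, forcing $\xi_{j+1}\gtrsim j/v$.

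Using the lower bound on $\xi_{j+1}$ to integrate the tail variables $\xi_{j+1}\le\cdots\le\xi_r\le 1$ over their shifted ordered simplex contributes a factor of at most $((v-j)_+/v)^{r-j}/(r-j)!$. For the head variables $\xi_1,\dots,\xi_j$, the substitution $\tau_i=\rho^{v\xi_i}$ (with Jacobian $(v\log\rho)^{-j}\prod d\tau_i/\tau_i$) turns the head integral into a Selberg-type integral on $\{1\le\tau_1\le\cdots\le\tau_j\}$ weighted by $(T_j/\rho^j)^k$ with $T_j=1+\tau_1+\cdots+\tau_j$; iterating the relation $T_j\le\rho T_{j-1}$, valid on $A_j$, produces clean bounds on the head. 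Combining head and tail contributions and summing over $j$ reduces the problem to an Abel-type binomial sum of the shape
$$
\frac{1}{r!}\sum_{j=0}^r\binom{r}{j}\Bigl(\frac{(v-j)_+}{v}\Bigr)^{r-j},
$$
augmented by an exponential factor $(k+1)^{v-r}$ in the regime $r>v$, whose careful estimation yields the bound advertised in Lemma \ref{s4l4}.

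\textbf{Main obstacle.} The sharp polynomial prefactor $1+|v-r|$ is the delicate point: in both regimes, the summand in $j$ decays at a geometric rate which is precisely critical, so naive geometric-series estimation produces an extra factor of $r$. Resolving this requires retaining the weight $(T_j/\rho^j)^k$ rather than bounding it trivially by $1$, especially in the regime $r>v$, where this weight supplies the missing exponential decay $(k+1)^{v-r}=\rho^{k(v-r)}$ through a telescoping geometric series in the $\tau_i$'s with common ratio $\rho^{-k}=1/(k+1)$, and the polynomial correction $1+|v-r|$ emerges as the length of the dominant range of $j$. This near-critical balancing parallels the delicate choice of the parameter $P$ in Lemma \ref{s3l5} for the lower bound argument.
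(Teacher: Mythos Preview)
Your plan has a genuine gap at the point where you extract the tail bound. On $A_j$ the one-step inequality $\sigma_j\le\sigma_{j+1}$ gives only
\[
\rho^{v\xi_{j+1}}\ge(\rho-1)T_j,\qquad T_j=1+\tau_1+\cdots+\tau_j,
\]
so the lower threshold on $\xi_{j+1}$ is $v^{-1}\log_\rho((\rho-1)T_j)$, which depends on $T_j$. The only a priori lower bound on $T_j$ is $T_j\ge j+1$, and this yields $\xi_{j+1}\gtrsim(\log j)/v$, not $\xi_{j+1}\gtrsim j/v$. Consequently the tail factor $((v-j)_+/v)^{r-j}/(r-j)!$ is unjustified; for ``small'' $T_j$ the tail simplex is essentially the full one, of volume $1/(r-j)!$. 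The companion claim that one can iterate $T_j\le\rho T_{j-1}$ on $A_j$ is also problematic: $\sigma_j\le\sigma_{j-1}$ holds only at the argmin $j$, while $\sigma_i$ for $i<j$ need not be monotone, so the head region is not the nested simplex $\{\tau_i\le(\rho-1)T_{i-1}\}$ you seem to be picturing. One can of course try to keep $T_j$ as a running variable and trade the weight $(T_j/\rho^j)^k$ against the enlarged tail, but then the head integral is no longer of Selberg type in any clean sense, and it is not clear how the final ``Abel-type'' sum arises or why it would produce the sharp $1+|v-r|$ rather than an extra factor of $r$.

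The paper proceeds quite differently. It slices by the \emph{value} of the minimum rather than by the \emph{location}: for each integer $m\ge1$ it shows that $\{F(\bv\xi)\ge 2^k(k+1)^{-m}\}$ is contained in
\[
\mathcal T_\rho(r,v,m)=\Bigl\{0\le\xi_1\le\cdots\le\xi_r\le1:\ \rho^{v\xi_1}+\cdots+\rho^{v\xi_j}\ge\rho^{j-m}\ (1\le j\le r)\Bigr\},
\]
and then bounds $\vol(\mathcal T_\rho(r,v,m))$ using Ford's sharp order-statistics estimate $Q_r(u,v)\ll(u+1)(u+v-r+1)/r$ together with a local-deviation lemma (Lemmas~\ref{s5l1}--\ref{s5l2}). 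It is precisely this ballot-problem input that manufactures the factor $1+|v-r|$; your argmin decomposition has no analogue of it, and your remark that the polynomial prefactor ``emerges as the length of the dominant range of $j$'' is not substantiated by the preceding steps.
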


We combine Lemmas \ref{s4l3}\;and \ref{s4l4}\;to estimate
$S^{(k+1)}(y_1)$.

\begin{lemma}\label{s4l5} We have that
$$S^{(k+1)}(y_1)\ll_k\frac{(\log
y_1)^{k+1-Q(\frac1{\log\rho})}}{(\log\log y_1)^{3/2}}.$$
\end{lemma}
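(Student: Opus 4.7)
The plan is to partition $S^{(k+1)}(y_1)$ according to $r=\omega_k(a)$, writing $S^{(k+1)}(y_1)=\sum_{r\ge 0}S_r^{(k+1)}(y_1)$. The $r=0$ term contributes only $O_k(1)$, since there are only $O_k(1)$ squarefree integers composed solely of primes $\le k$. For $1\le r\le 10kv$ (with $v=\lfloor\log\log y_1/\log\rho\rfloor$ as in Lemma \ref{s4l3}), Lemmas \ref{s4l3} and \ref{s4l4} combine to yield
\[
S_r^{(k+1)}(y_1)\ll_k\frac{((k+1)X)^r(1+|v-r|)}{(r+1)!((k+1)^{r-v}+1)},\qquad X:=\log\log y_1.
\]
For the tail $r>10kv$ I would instead use the crude bound $L^{(k+1)}(a)\le\tau_{k+1}(a)(\log 2)^k\ll_k(k+1)^r$ combined with the standard estimate $\sum_{P^+(a)\le y_1,\,\omega_k(a)=r,\,\mu^2(a)=1}1/a\ll_k(X+O_k(1))^r/r!$. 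Stirling's formula then yields $S_r^{(k+1)}(y_1)\ll_k(e(k+1)\log\rho/(10k))^r$ for such $r$; since $e(k+1)\log\rho/(10k)<1$ for every $k\ge1$, the full tail contributes $O_k(e^{-cv})$ for a positive $c=c(k)$ and is therefore negligible.

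The substantive work is summing the bound above over $1\le r\le 10kv$, which I plan to split at $r=v$. For $r\le v$ I use $(k+1)^{r-v}+1\ge 1$; for $r>v$ I use $(k+1)^{r-v}+1\ge(k+1)^{r-v}$, which collapses the factor $(k+1)^r$ to $(k+1)^v$. Reparametrising via $s=v-r$ on the left tail and $s=r-v$ on the right, and dividing each term by its $s=0$ counterpart, the elementary inequalities $(v+1)v\cdots(v-s+2)\le(v+1)^s$ and $(v+2)(v+3)\cdots(v+s+1)\ge v^s$ show that each sum is geometric in $s$ (up to the harmless polynomial factor $1+s$) with respective ratios tending to
\[
\frac1{(k+1)\log\rho}\qquad\text{and}\qquad\log\rho.
\]
Both lie in $(0,1)$: the first because $(k+1)\log\rho=(k+1)\log(k+1)/k\ge 2\log 2>1$, the second because $\rho=(k+1)^{1/k}\le 2<e$. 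Each sum is therefore dominated by its boundary term, giving
\[
\sum_{1\le r\le 10kv}S_r^{(k+1)}(y_1)\ll_k\frac{(k+1)^vX^v}{(v+1)!}.
\]

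Finally, Stirling's formula $(v+1)!\asymp\sqrt{v}\,(v/e)^{v+1}$ together with $v=X/\log\rho+O(1)$, which forces $(X/v)^v\asymp_k(\log\rho)^v$, gives
\[
\frac{(k+1)^vX^v}{(v+1)!}\asymp_k\frac{(e(k+1)\log\rho)^v}{v^{3/2}}.
\]
The algebraic identity
\[
\frac{\log(e(k+1)\log\rho)}{\log\rho}=k+1-Q\!\left(\frac1{\log\rho}\right),
\]
which follows from $\log(k+1)=k\log\rho$ and $Q(u)=u\log u-u+1$, converts the numerator into $(\log y_1)^{k+1-Q(1/\log\rho)}$ up to a $k$-dependent constant, while $v^{3/2}\asymp(\log\log y_1)^{3/2}$ delivers the desired bound. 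The main obstacle is executing the geometric summation cleanly on both sides of the peak $r=v$ with the $(1+|v-r|)$ factor from Lemma \ref{s4l4} present; the twin inequalities $(k+1)\log\rho>1$ and $\log\rho<1$ are precisely what make decay go through in both directions.
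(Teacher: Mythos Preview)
Your proposal is correct and follows essentially the same route as the paper: split $S^{(k+1)}(y_1)=\sum_r S_r^{(k+1)}(y_1)$, apply Lemmas~\ref{s4l3} and~\ref{s4l4} for $1\le r\le 10kv$, break the resulting sum at $r=v$ using $(k+1)\log\rho>1$ on the left and $\log\rho<1$ on the right to see the peak term $((k+1)\log\log y_1)^v/(v+1)!$ dominates, handle the tail $r>10kv$ with the trivial bound $L^{(k+1)}(a)\le(\log 2)^k\tau_{k+1}(a)$, and finish with Stirling. Your write-up is in fact slightly more explicit than the paper's (you spell out the algebraic identity for the exponent and the geometric-ratio mechanism), but there is no substantive difference in method.
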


\begin{proof} Let $v=\left\lfloor\frac{\log\log
y_1}{\log\rho}\right\rfloor$. By Lemmas \ref{s4l3}\;and
\ref{s4l4}\;we have that \be\label{s4e7}\sum_{v<r\le
10kv}S_r^{(k+1)}(y_1)\ll_k\sum_{v<r\le
10kv}\frac{(r-v)}{(k+1)^{r-v}}\frac{((k+1)\log\log
y_1)^r}{(r+1)!}\ll_k\frac{((k+1)\log\log y_1)^v}{(v+1)!},\ee since
$\log\rho<1$, and \be\label{s4e8}\sum_{1\le r\le
v}S_r^{(k+1)}(y_1)\ll_k\sum_{1\le r\le v}(1+v-r)\frac{((k+1)\log\log
y_1)^r}{(r+1)!}\ll_k\frac{((k+1)\log\log y_1)^v}{(v+1)!},\ee since
$(k+1)\log\rho>1$. It remains to estimate the sum of
$S_r^{(k+1)}(y_1)$ over $r>10kv$. Let $r>10kv$ and $a\in\SN$ so that
$\mu^2(a)=1$ and $\omega_k(a)=r$. Then we may uniquely write
$a=a_1a_2$ with $P^+(a_1)\le k<P^-(a_2)$, in which case
$\omega(a_2)=r$. Applying Lemma \ref{s3l1}(a)\;we find that
$$L^{(k+1)}(a)\le(\log2)^k\tau_{k+1}(a)=(\log 2)^k\tau_{k+1}(a_1)(k+1)^r.$$
Hence
\be\begin{split}\sum_{r>10kv}S_r^{(k+1)}(y_1)&\le(\log2)^k\sum_{r>10kv}\sum_{\substack{P^+(a_1)\le
k\\\mu^2(a_1)=1}}\sum_{\substack{a_2\in\mathscr{P}(k,y_1)\\\omega(a_2)=r,\mu^2(a_2)=1}}\frac{\tau_{k+1}(a_1)(k+1)^r}{a_1a_2}\nonumber\\
&\ll_k\sum_{r>10kv}(k+1)^r\sum_{\substack{a_2\in\mathscr{P}(k,y_1)\\\omega(a_2)=r,\mu^2(a_2)=1}}\frac1{a_2}
\le\sum_{r>10kv}\frac{(k+1)^r}{r!}\left(\sum_{k<p\le
y_1}\frac1p\right)^r.\end{split}\ee Since
$$\sum_{k<p\le y_1}\frac1p=\log\log y_1+O_k(1),$$ we
deduce that
\be\label{s4e9}\begin{split}\sum_{r>10kv}S_r^{(k+1)}(y_1)&\ll_k\frac{((k+1)\log\log
y_1+O_k(1))^{10kv}}{(10kv)!}\ll_k\frac{((k+1)\log\log
y_1)^v}{(v+1)!}.\end{split}\ee Combining relations
\eqref{s4e7},\;\eqref{s4e8}\;and \eqref{s4e9}\;we get that
$$S^{(k+1)}(y_1)\ll_kS^{(k+1)}_0(y_1)+\frac{((k+1)\log\log
y_1)^v}{(v+1)!}\ll_k1+\frac{((k+1)\log\log y_1)^v}{(v+1)!}.$$ Thus
an application of Stirling's formula completes the proof.
\end{proof}

Finally, insert the estimate of Lemma \ref{s4l5}\;into Lemma
\ref{s4l2}\;to finish the proof of the upper bound in Theorem
\ref{th1}.

\section{Estimates from order statistics}\label{ord_stat}

The following discussion is a generalization of estimates about
uniform order statistics obtained in \cite{kf1}\;in order to fit
this context. Our ultimate goal is to give a proof of Lemmas
\ref{s3l9}\;and \ref{s4l4}. Set
$$S_r(u,v)=\left\{(\xi_1,\dots,\xi_r)\in\SR^r:0\le\xi_1\le\cdots\le\xi_r\le1,\;\xi_i\ge\frac{i-u}{v}\;(1\le
i\le r)\right\}$$ and
$$Q_r(u,v)=\mathbf{Prob}\left(\xi_i\ge\frac{i-u}v\;(1\le i\le
k)\, \Big\lvert\,0\le\xi_1\le\cdots\le\xi_r\le1\right)=r!\vol(S_r(u,v)).$$

Combining Theorem 1 in \cite{kf3}\;and Lemma 11.1 in \cite{kf2}\;we
have the following result.

\begin{lemma}\label{s5l1} Let $w=u+v-r$. Uniformly in $u>0$,
$w>0$ and $r\ge1$, we have
$$Q_r(u,v)\ll\frac{(u+1)(w+1)}{r}.$$ Furthermore, if $1\le u\le r$, then
$$Q_r(u,r+1-u)\ge\frac{u-1/2}{r+1/2}.$$
\end{lemma}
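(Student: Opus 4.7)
This statement is essentially a compilation lemma, and I would prove it by invoking the two cited results of Ford and reconciling their notation with ours.

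For the upper bound, the plan is to apply Theorem 1 of \cite{kf3} directly. That theorem is a sharp estimate of Smirnov-ballot type: for $r$ uniform order statistics $\xi_1\le\cdots\le\xi_r$ on $[0,1]$ and a linear barrier $(i-u)/v$, it controls the probability that $\xi_i$ lies above the barrier for every $i$ by a product of three factors — one measuring the ``slack'' at the left endpoint (the $u+1$), one measuring the slack at the right endpoint (the $w+1$, where $w=u+v-r$ is the gap $1-(r-u)/v$ rescaled), and a normalization $1/r$. Since $Q_r(u,v)=r!\vol(S_r(u,v))$ is by definition the probability $\mathbf{Prob}(\xi_i\ge(i-u)/v,\,1\le i\le r)$, the only thing to verify is that the parametrization of \cite{kf3} matches ours; after this straightforward translation the bound $Q_r(u,v)\ll (u+1)(w+1)/r$ follows.

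For the lower bound, one specializes to $v=r+1-u$, in which case $w=u+v-r=1$, and applies Lemma 11.1 of \cite{kf2}. That lemma is a direct combinatorial computation: by the Cycle Lemma (equivalently Dwass' formulation of the ballot theorem for uniform order statistics), $Q_r(u,r+1-u)$ admits an exact formula in terms of binomial coefficients, and elementary manipulation of that formula yields the stated lower bound $(u-1/2)/(r+1/2)$. No further probabilistic input is needed beyond the cited lemma.

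The proof itself is therefore essentially a citation with a parametrization check; there is no real obstacle. The only care required is to confirm that the quantities called ``$u$'', ``$v$'', and ``$w$'' in \cite{kf3} and \cite{kf2} line up with the definitions here (in particular, that the condition $w>0$ in our statement corresponds to the nondegeneracy condition imposed in \cite{kf3}, since the constraints $\xi_i\ge(i-u)/v$ combined with $\xi_r\le1$ force $v\ge r-u$, i.e.\ $w\ge 0$, with the strict version used there).
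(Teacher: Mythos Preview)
Your proposal is correct and matches the paper's approach exactly: the paper simply states that the lemma follows by combining Theorem~1 of \cite{kf3} (for the upper bound) with Lemma~11.1 of \cite{kf2} (for the lower bound), which is precisely the citation-plus-parametrization-check you describe.
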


Also, we need Lemma 4.3 from \cite{kf1}, which we state below. Note
that we have replaced the constant``10" with a general constant $C$,
$(u+v-r)^2$ by $u+v-r$ and $(g-2)!$ by $(g-1)!$, which is allowed
because we are using Lemma \ref{s5l1}\;in place of Lemma 4.1 in
\cite{kf1}. The proof remains exactly the same.

\begin{lemma}\label{s5l2} Suppose $g,r,s,u,v\in\SZ$ satisfy $$2\le g\le
r/2,\;s\ge 0,\;r\le Cv,\;u\ge 0,\;u+v\ge r+1,$$ where $C$ is a
constant $>1$. Let $R$ be the subset of $\bv{\xi}\in S_r(u,v)$ such
that, for some $l\ge g+1$, we have
$$\frac{l-u}v\le\xi_l\le\frac{l-u+1}v,\;\;\;\xi_{l-g}\ge\frac{l-u-s}v.$$
Then $${\rm
Vol}(R)\ll_C\frac{(C(s+1))^g}{(g-1)!}\frac{(u+1)(u+v-r)}{(r+1)!}.$$
\end{lemma}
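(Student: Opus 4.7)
The plan is to decompose $R=\bigcup_{l\ge g+1}R_l$ according to the witnessing index, bound $\vol(R_l)$ for each $l$, and then sum. After fixing $l$, the constraints only couple through the two coordinates $\xi_l$ and $\xi_{l-g}$: the former ranges over an interval of length $1/v$ and the latter over an interval of length at most $(s+1)/v$. I will freeze $\xi_l=t$ and $\xi_{l-g}=b$; the remaining coordinates then split into three independent blocks whose volumes I estimate separately.

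The middle block $\xi_{l-g+1},\dots,\xi_{l-1}$ is an ordered $(g-1)$-tuple in $[b,t]$ of volume $(t-b)^{g-1}/(g-1)!$. The lower block $\xi_1,\dots,\xi_{l-g-1}$ is an ordered tuple in $[0,b]$ satisfying $\xi_i\ge(i-u)/v$; rescaling $\xi_i\mapsto\xi_i/b$ turns it into $b^{l-g-1}\vol(S_{l-g-1}(u,bv))$, which by Lemma~\ref{s5l1} is $O\bigl(b^{l-g-1}(u+1)g/((l-g-1)\cdot(l-g-1)!)\bigr)$ once one notes that $bv\le l-u+1$ forces $u+bv-(l-g-1)+1=O(g)$. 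Similarly, the upper block $\xi_{l+1},\dots,\xi_r$, shifted by $t$ and rescaled by $1-t$, becomes $(1-t)^{r-l}\vol(S_{r-l}(u+tv-l,v(1-t)))$, and Lemma~\ref{s5l1} yields $O\bigl((1-t)^{r-l}(u+v-r+1)/((r-l)\cdot(r-l)!)\bigr)$, since $u+tv-l=O(1)$ on the $t$-range. Multiplying these three bounds, integrating $b$ from $(l-u-s)/v$ up to $t$ collapses $(t-b)^{g-1}/(g-1)!$ into $\le((s+1)/v)^g/g!$, and integrating $t$ over an interval of length $1/v$ contributes an extra factor of $1/v$. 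The spare factor $g$ from the lower-block estimate combines with $g!$ to produce the target denominator $(g-1)!$.

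It remains to sum the resulting bound for $\vol(R_l)$ over $l=g+1,\dots,r$. Using $r\le Cv$, $u+v\ge r+1$, and the elementary estimate that $b^{l-g-1}(1-t)^{r-l}/((l-g-1)!(r-l)!)$ is, up to $C$-dependent factors, dominated by $C^g/v^{r-g-1}(r+1)!$ uniformly in admissible $l$, $b$, $t$, the $l$-sum collapses to $O_C(C^g/(r+1)!)$ after extracting the $v^{-(r-g-1)}$ that balances the $(s+1)^g/v^g\cdot 1/v$ already collected. Multiplying by the factors $(u+1)$ and $(u+v-r+1)\asymp u+v-r$ from the $Q$-estimates yields the claimed bound. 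The main obstacle I expect is precisely this final summation: tracking how the powers $b^{l-g-1}$ and $(1-t)^{r-l}$ combine with the factorials $(l-g-1)!,(r-l)!$ to yield $(r+1)!^{-1}$ requires delicate use of $r\le Cv$ together with $u+v\ge r+1$, and it is here that both the $C$-dependent implied constant and the geometric factor $C^g$ in the final estimate originate.
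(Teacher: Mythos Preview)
The paper does not supply its own proof here; it quotes the lemma from Ford \cite[Lemma 4.3]{kf1} and states that the argument carries over verbatim. Your outline---decompose $R=\bigcup_{l\ge g+1}R_l$, freeze $\xi_l=t$ and $\xi_{l-g}=b$, split the remaining coordinates into lower/middle/upper blocks, and feed the two outer blocks into Lemma~\ref{s5l1} to extract the factors $(u+1)$ and $(u+v-r)$---is precisely the shape of that argument, so structurally you are on the right track.

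The gap is exactly where you anticipate it. The uniform pointwise estimate you invoke,
\[
\frac{b^{\,l-g-1}(1-t)^{r-l}}{(l-g-1)!\,(r-l)!}\ \ll_C\ \frac{C^g}{v^{\,r-g-1}(r+1)!}
\quad\text{uniformly in admissible }l,b,t,
\]
is false. Take for instance $u=g$, $v=r-g+1$ (so $u+v=r+1$), $l=\lfloor(r+g)/2\rfloor$, and $b=t=(l-g)/v$. Writing $M=(r-g)/2$, Stirling's formula gives
\[
v^{\,r-g-1}\cdot\frac{b^{\,l-g-1}(1-t)^{r-l}}{(l-g-1)!\,(r-l)!}
=\frac{M^{M-1}(M+1)^M}{(M-1)!\,M!}\ \asymp\ \frac{e^{2M}}{M},
\]
whereas $C^g/(r+1)!=C^g/(2M+g+1)!$ decays super-exponentially in $M$. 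So the left side exceeds your proposed right side by a factor that blows up, and no termwise bound of this kind can close the argument. The underlying reason is simple: the quantity on the left is a single binomial-type term, of size comparable to $1/(r-g-1)!$ for central $l$, and there is no mechanism to manufacture the missing factor $r^{-(g+2)}$ uniformly in $l$.

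The $l$-sum therefore needs a genuine argument beyond what you have written. One has to exploit that the intervals $I_l=[(l-u)/v,(l-u+1)/v]$ are pairwise disjoint, so that $\sum_l\int_{I_l}(\cdots)\,dt$ becomes a single integral over $t\in[0,1]$ with $l=l(t)=\lfloor vt+u\rfloor$; on this integral the exponents $l-g-1$ and $r-l$ are coupled to $t$ through $r\le Cv$ and $u+v\ge r+1$, and it is this coupling---not a uniform bound---that forces the required collapse and produces the factor $C^g$. That is the substantive content of Ford's computation, and it is not supplied in your proposal.
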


We now prove Lemma \ref{s3l9}.

\begin{proof}[Proof of Lemma \ref{s3l9}] For $\bv\xi=(\xi_1,\dots,\xi_B)\in\SR^B$ set
$$F_B(\bv\xi)=\sum_{j=1}^B\lambda^{j-B\xi_j}.$$ Note that
\be\label{s5e100}\begin{split}\vol(Y_B(N))&=\vol(S_B(1,B))-\vol(\{\bv\xi\in
S_B(1,B):F_B(\bv\xi)>\lambda^N\}\\
&\ge\frac1{(2B+1)B!}-\frac1{\lambda^N}\int\limits_{S_B(1,B)}F_B(\bv\xi)d\bv\xi,\end{split}\ee
by Lemma \ref{s5l1}. In \cite[Lemma 4.9, p. 423-424]{kf2}\;it is
shown that
$$\int\limits_{S_r(u,v)}\sum_{j=1}^r2^{j-v\xi_j}d\bv\xi\ll\frac{2^uu}{(r+1)!},$$ provided that $u+v=r+1$, $1\le v\le
r\le100(v-1)$ and $r$ is large enough. Following the same argument
we have
\be\label{s5e101}\int\limits_{S_B(1,B)}F_B(\bv\xi)d\bv\xi\ll_k\frac1{(B+1)!}\ee
(the only thing we need to check is that $\lambda>1$ so that the
integral $\int_0^\infty(y+1)^3\lambda^{-y}dy$ converges). By
\eqref{s5e100}\;and \eqref{s5e101}\;we deduce that
$$\vol(Y_B(N))\ge\frac1{(2B+1)B!}-O_k\left(\frac{\lambda^{-N}}{(B+1)!}\right)\gg\frac1{(B+1)!},$$
provided that $N=N(k)$ is large enough. This completes the proof.
\end{proof}

Finally, we show Lemma \ref{s4l4}. Before we get to this proof we
need a preliminary result. For $\mu>1$ define
$$\mathcal{T}_\mu(r,v,\gamma)=\{0\le\xi_1\le\cdots\le\xi_r\le1:\mu^{v\xi_1}+\cdots+\mu^{v\xi_j}\ge\mu^{j-\gamma}\;(1\le
j\le r)\}.$$ Then we have the following estimate.

\begin{lemma}\label{s5l4} Suppose $r,v,\gamma$ are integers with $\gamma\ge0$ and $1\le r\le
Cv$, where $C>1$ is a constant. Set $b=r-v$ and
$$Y=\begin{cases}b&\text{if}\;b\ge\gamma+1,\cr
(\gamma-b+1)(\gamma+1)&\text{else}.\end{cases}$$ Then
$$\vol(\mathcal{T}_\mu(r,v,\gamma))\ll_{C,\mu}\frac
Y{\mu^{\mu^{b-\gamma}}(r+1)!}.$$
\end{lemma}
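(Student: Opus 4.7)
The plan is to generalize the argument from \cite{kf1} (which handles $\mu=2$) to arbitrary $\mu>1$, using Lemmas \ref{s5l1} and \ref{s5l2} as the main tools. First, I would dispatch the trivial case: if $\mu^{b-\gamma}>r$, then for every $\bv\xi$ in the simplex, $\sum_{i=1}^r\mu^{v\xi_i}\le r\mu^v<\mu^{v+b-\gamma}=\mu^{r-\gamma}$, so the $j=r$ constraint fails and $\mathcal{T}_\mu(r,v,\gamma)=\emptyset$. Hence I may assume $\mu^{b-\gamma}\le r$, which in particular keeps the target double-exponential $\mu^{\mu^{b-\gamma}}$ at most polynomial in $r$.

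In the regime $b\le\gamma$ (so $Y=(\gamma-b+1)(\gamma+1)$ and $\mu^{\mu^{b-\gamma}}\ll_\mu 1$), I would use the crude inequality $j\mu^{v\xi_j}\ge\sum_{i\le j}\mu^{v\xi_i}\ge\mu^{j-\gamma}$ to conclude $\xi_j\ge(j-\gamma-\log_\mu j)/v$ for every $j$, and hence $\mathcal{T}_\mu(r,v,\gamma)\subset S_r(\gamma+\log_\mu r,v)$. An application of Lemma \ref{s5l1} with $u=\gamma+O(\log r)$ and $w=\gamma-b+O(\log r)$ yields the required bound, since the additive $\log r$ terms are absorbed by the implicit constant in $(u+1)(w+1)\asymp(\gamma+1)(\gamma-b+1)=Y$.

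The main regime is $b\ge\gamma+1$, where $Y=b$ and one must produce the double-exponential factor $\mu^{\mu^{\Delta}}$ with $\Delta:=b-\gamma\ge 1$. The pivotal structural observation is that for any $\bv\xi\in\mathcal{T}_\mu(r,v,\gamma)$ the top $K=\lceil\mu^\Delta\rceil$ coordinates $\xi_{r-K+1},\ldots,\xi_r$ must accumulate in a window of width $O(1/v)$ just below $1$: if instead $\xi_{r-K+1}<1-c/v$ for a suitably large $c=c(\mu)$, then $\sum\mu^{v\xi_i}\le K\mu^v+(r-K)\mu^{v-c}$ falls short of the threshold $\mu^{v+\Delta}$. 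I would decompose $\mathcal{T}_\mu(r,v,\gamma)$ by the position of this cluster and then apply Lemma \ref{s5l2} to each piece with parameters $l=r$, $g=K-1\asymp\mu^\Delta$, $s=O(1)$, and an adjusted shift $u\asymp b+O(1)$, chosen so that the required lower bounds on $\xi_l$ and $\xi_{l-g}$ encode the cluster constraint while keeping $u\ge 0$ and $u+v-r=O(1)\ge 1$. This yields a volume bound of order $C^g(u+1)(u+v-r)/\bigl((g-1)!\,(r+1)!\bigr)$; together with $(u+1)(u+v-r)\asymp b=Y$ and Stirling's estimate $(g-1)!\asymp(\mu^\Delta)!\gg_\mu\mu^{\mu^\Delta}$ (for $\Delta\ge 2$, with the case $\Delta=1$ handled by absorbing into the implicit constant), this produces the claimed double-exponential factor.

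The principal obstacle is choosing the shift $u$ so that the hypotheses of Lemma \ref{s5l2} ($u\ge 0$, $u+v\ge r+1$, and the precise form of the constraints on $\xi_l$ and $\xi_{l-g}$) are compatible with the cluster constraint derived from the sum bound, and then summing carefully over the discrete parameters (the location of the cluster and the choice of $K$) without accumulating spurious polynomial factors that would destroy the linear prefactor $Y=b$. A secondary subtlety appears at the boundary $\Delta\asymp\log_\mu r$ where the empty-set regime meets the main regime and Stirling only just matches the target; there one must track polynomial prefactors with care to recover the sharp bound.
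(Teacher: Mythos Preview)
Your proposal has genuine gaps in both non-trivial regimes.

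In the case $b\le\gamma$, the inclusion $\mathcal{T}_\mu(r,v,\gamma)\subset S_r(\gamma+\log_\mu r,v)$ together with Lemma~\ref{s5l1} yields a bound of order $(\gamma+\log_\mu r+1)(\gamma-b+\log_\mu r+1)/(r+1)!$. The claim that the additive $\log r$ terms ``are absorbed by the implicit constant'' is false: take $\gamma=0$, $b=0$, so $Y=1$ and the target is $\asymp_\mu 1/(r+1)!$, whereas your bound is $\asymp(\log r)^2/(r+1)!$. The implied constant may depend on $C$ and $\mu$ only, so this loss is fatal.

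In the case $b\ge\gamma+1$, your clustering claim is incorrect as stated. With $K=\lceil\mu^\Delta\rceil$ and the hypothesis $\xi_{r-K+1}<1-c/v$, the upper bound on the sum is $(K-1)\mu^v+(r-K+1)\mu^{v-c}$; forcing this below $\mu^{v+\Delta}$ requires $(r-K+1)\mu^{-c}<\mu^\Delta-(K-1)$. The right-hand side lies in $(0,1]$ and is arbitrarily small whenever $\mu^\Delta$ sits just above an integer, so no fixed $c=c(\mu)$ works; one would need $c\gtrsim\log_\mu r$. (Your written inequality $K\mu^v+(r-K)\mu^{v-c}<\mu^{v+\Delta}$ is worse still, since $K\ge\mu^\Delta$.) Even granting clustering, pinning $l=r$ in Lemma~\ref{s5l2} confines $\xi_r$ to a window of width $1/v$, so a further sum over its location accrues another logarithmic loss.

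The paper's proof handles both regimes with one argument and no logarithmic losses. It does not fix $l=r$; instead it locates the index $l$ at which $\xi_j-(j-\gamma)/v$ is minimized, say with value in $[-h/v,(-h+1)/v]$. If $h$ stays below a constant $t=t(\mu)$, then $\bv\xi\in S_r(\gamma+t,v)$ and Lemma~\ref{s5l1} gives the bound with a \emph{constant} shift, avoiding the $\log r$ loss. Otherwise one shows, by a dyadic decomposition of $\sum_{i\le l}\mu^{v\xi_i}$, that some block of $\lfloor\mu^m\rfloor$ coordinates with $m\asymp h$ lies within distance $2m/v$ of $\xi_l$; Lemma~\ref{s5l2} at this variable $l$ with $g=\lfloor\mu^m\rfloor$ and $s=2m$, followed by summation over $h$ and $m$, produces the factor $\mu^{-\mu^{b-\gamma}}$ with the correct linear prefactor $Y$.
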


\begin{proof} Set $t=\max\{b-\gamma,2+\frac{\log32}{\log\mu}\}$. For every
$\bv\xi\in\mathcal{T}_\mu(r,v,\gamma)$ we have that either
\be\label{s5e1}\xi_j>\frac{j-\gamma-t}v\quad(1\le j\le r)\ee or
there are integers $h\ge t+1$ and $1\le l\le r$ such that
\be\label{s5e2}\min_{1\le j\le
r}\biggl(\xi_j-\frac{j-\gamma}v\biggr)=\xi_l-\frac{l-\gamma}v\in\biggl[\frac
{-h}v,\frac{-h+1}v\biggr].\ee Let $V_1$ be the volume of
$\bv\xi\in\mathcal{T}_\mu(r,v,\gamma)$ that satisfy
\eqref{s5e1}\;and let $V_2$ be the volume of
$\bv\xi\in\mathcal{T}_\mu(r,v,\gamma)$ that satisfy \eqref{s5e2}\;
for some integers $h\ge t+1$ and $1\le l\le r$. First, we bound
$V_1$. If $b\ge\gamma+2+\log32/\log\mu$ so that $t=b-\gamma$, then
\eqref{s5e1}\;is not possible because it would imply that $\xi_r>1$.
So assume that $b<\gamma+2+\log32/\log\mu$, in which case
$t=2+\log32/\log\mu$. Then
$$V_1\le\frac{Q_r(\gamma+2+\frac{\log32}{\log\mu},v)}{r!}
\ll\frac{(\gamma+3+\frac{\log32}{\log\mu})(\gamma+3+\frac{\log32}{\log\mu}-b)}{(r+1)!}
\ll_{\mu}\frac Y{\mu^{\mu^{b-\gamma}}(r+1)!},$$ by Lemma \ref{s5l1}.
Finally, we bound $V_2$. Fix $h\ge t+1$ and $1\le l\le r$ and
consider $\bv\xi\in\mathcal{T}_\mu(r,v,\gamma)$ that satisfies
\eqref{s5e2}. Then
$$-\frac{l-\gamma}v\le\xi_l-\frac{l-\gamma}v\le\frac{-h+1}v$$ and
consequently
$$l\ge\gamma+h-1\ge\gamma+t>2.$$ Set
\be\label{s5e3b}h_0=h-1-\left\lceil\frac{\log4}{\log\mu}\right\rceil\ge
t-\left(\frac{\log4}{\log\mu}+1\right)\ge1+\frac{\log8}{\log\mu}.\ee
We claim that there exists some $m\in\SN$ with $m\ge h_0$ and
$\lfloor\mu^m\rfloor<l/2$ such that
\be\label{s5e3}\xi_{l-\lfloor\mu^m\rfloor}\ge\frac{l-\gamma-2m}v.\ee
Indeed, note that
\be\label{s5e10}\begin{split}\mu^{v\xi_1}+\cdots+\mu^{v\xi_l}\le2\sum_{l/2<j\le
l}\mu^{v\xi_j}&\le2\left(\mu^{v\xi_l}+\sum_{\substack{m\ge
0\\\lfloor\mu^{m}\rfloor<l/2}}\sum_{\lfloor\mu^m\rfloor\le
j<\lfloor\mu^{m+1}\rfloor}\mu^{v\xi_{l-j}}\right)\\
&\le2\left(\mu^{h_0}\mu^{v\xi_l}+\sum_{\substack{m\ge
h_0\\\lfloor\mu^{m}\rfloor<l/2}}(\lfloor\mu^{m+1}\rfloor-\lfloor\mu^m\rfloor)\mu^{v\xi_{l-\lfloor\mu^m\rfloor}}\right).\end{split}\ee
So if \eqref{s5e3}\;failed for all $m\ge h_0$ with
$\lfloor\mu^m\rfloor<l/2$, then
\eqref{s5e2}\;and \eqref{s5e10}\;would imply that
\be\begin{split}\mu^{v\xi_1}+\cdots+\mu^{v\xi_l}&<2\left(\mu^{h_0}\mu^{l-\gamma-h+1}+\sum_{m\ge
h_0}(\lfloor\mu^{m+1}\rfloor-\lfloor\mu^m\rfloor)\mu^{l-\gamma-2m}\right)\nonumber\\
&=2\mu^{l-\gamma}\left(\mu^{-\left\lceil\frac{\log4}{\log\mu}\right\rceil}+(\mu^2-1)\sum_{m\ge
h_0+1}\lfloor\mu^m\rfloor\mu^{-2m}-\lfloor\mu^{h_0}\rfloor\mu^{-2h_0}\right)\\
&\le2\mu^{l-\gamma}\left(\frac14+\frac{\mu^{h_0+1}+1}{\mu^{2h_0}}\right)
\le2\mu^{l-\gamma}\left(\frac14+\frac2{\mu^{h_0-1}}\right)\le\mu^{l-\gamma},\end{split}\ee
by \eqref{s5e3b}, which is a contradiction. Hence \eqref{s5e3}\;does
hold and Lemma \ref{s5l2}\;applied with $u=\gamma+h$,
$g=\lfloor\mu^m\rfloor$ and $s=2m$ implies that
\be\begin{split}V_2&\ll_C\sum_{h\ge t+1}\sum_{m\ge h_0}\frac{(\gamma+h+1)(\gamma+h-b)}{(r+1)!}\frac{(C(2m+1))^{\lfloor\mu^m\rfloor}}{(\lfloor\mu^m\rfloor-1)!}\nonumber\\
&\ll_{C,\mu}\sum_{h\ge t+1}\sum_{m\ge h_0}\frac{(\gamma+h+1)(\gamma+h-b)}{(r+1)!}\left(\frac{Ce(2m+1)}{\mu^m}\right)^{\mu^m}\mu^{2m}\nonumber\\
&\ll_{C,\mu}\sum_{h\ge t+1}\sum_{m\ge
h_0}\frac{(\gamma+h+1)(\gamma+h-b)}{\mu^{\mu^{m+m_0}}(r+1)!},\end{split}\nonumber\ee
where $m_0=1+\lceil\log4/\log\mu\rceil$. The sum of
$\mu^{-\mu^{m+m_0}}$ over $m\ge h_0$ is $\ll_\mu\mu^{-\mu^{h}}$.
Finally, summing over $h\ge t+1$ gives us that
$$V_2\ll_{C,\mu}\frac{(\gamma+t+2)(\gamma-b+t+1)}{\mu^{\mu^{t+1}}(r+1)!}
\ll_{C,\mu}\frac Y{\mu^{\mu^{b-\gamma}}(r+1)!},$$ which completes
the proof.
\end{proof}

\begin{proof}[Proof of Lemma \ref{s4l4}] Recall that $\rho=(k+1)^{1/k}$.
Set $$F(\bv\xi)=\biggl(\min_{0\le j\le
r}\rho^{-j}(\rho^{v\xi_1}+\cdots+\rho^{v\xi_j}+1)\biggr)^k$$ and
note that $F(\bv\xi)\le1$. Fix an integer $m\ge 1$. Consider
$\bv\xi\in\SR^r$ with $0\le\xi_1\le\cdots\le\xi_r\le1$ such that
$2^k(k+1)^{-m}\le F(\bv\xi)<2^k(k+1)^{1-m}$. For $1\le j\le r$ we
have that
$$\rho^{-j}(\rho^{v\xi_1}+\cdots+\rho^{v\xi_j})\ge\rho^{-j}$$ and
\be\begin{split}\rho^{-j}(\rho^{v\xi_1}+\cdots+\rho^{v\xi_j})&=\rho^{-j}(\rho^{v\xi_1}+\cdots+\rho^{v\xi_j}+1)-\rho^{-j}\nonumber\\
&\ge\left(F(\bv\xi)\right)^{1/k}-\rho^{-j}\ge2\rho^{-m}-\rho^{-j}.\nonumber\end{split}\ee
Thus
$$\rho^{-j}(\rho^{v\xi_1}+\cdots+\rho^{v\xi_j})\ge\max\{\rho^{-j},2\rho^{-m}-\rho^{-j}\}
\ge\rho^{-m},$$ that is $\bv\xi\in\mathcal{T}_\rho(r,v,m)$.
Therefore Lemma \ref{s5l4}\;applied with $C=10k$ implies that
\be\label{s5e200}\begin{split}U_r(v;k)&\le\sum_{m=1}^\infty2^k(k+1)^{1-m}\vol(\mathcal{T}_\rho(r,v,m))\\
&\ll_k\frac1{(r+1)!}\left(\sum_{1\le m\le b}\frac
b{(k+1)^m\rho^{\rho^{b-m}}}+\sum_{m\ge\max\{1,b\}}\frac{m(m-b+1)}{(k+1)^m}\right).\end{split}\ee
If $b\ge1$, then each sum in the right hand side of
\eqref{s5e200}\;is $\ll_kb(k+1)^{-b}$. On the other hand, if
$b\le0$, then the first sum is empty and the second one is
$\ll_k1+|b|$. In any case,
$$U_r(v;k)\ll_k\frac1{(r+1)!}\frac{1+|b|}{(k+1)^b+1},$$ which completes the proof.
\end{proof}

\bigskip\bigskip

\bibliographystyle{amsplain}

\end{document}